\title{Twisted knots and the perturbed Alexander invariant} 
\author{Joe Boninger}  
\address{Department of Mathematics, Boston College, Chestnut Hill, MA, 02467} 
\email{boninger@bc.edu}  
\thanks{This material is based upon work supported by the National Science Foundation under Award No.~2202704.} % \thanks entries are to acknowledge grants. 
\keywords{knot theory, quantum topology, alexander polynomial, perturbed alexander invariant}
\subjclass[2010]{57K10, 57K14}
\let\uml\"
\newcommand{\Z}{\mathbb{Z}}
\newcommand{\N}{\mathbb{N}}
\newcommand{\R}{\mathbb{R}}
\newcommand{\C}{\mathbb{C}}
\newtheorem{thm}{Theorem}
\numberwithin{thm}{section}
\newtheorem{conj}[thm]{Conjecture}
\newtheorem{prob}[thm]{Problem}
\newtheorem{prop}[thm]{Proposition}
\newtheorem{lemma}[thm]{Lemma}
\newtheorem{cor}[thm]{Corollary}
\newtheorem*{namedtheorem}{\theoremname}
\newcommand{\theoremname}{testing}
\newenvironment{named_thm}[1]{\renewcommand{\theoremname}{#1}\begin{namedtheorem}}{\end{namedtheorem}}
\newcommand{\refthm}[1]{Theorem~\ref{thm:#1}}
\theoremstyle{definition}
\newtheorem{defn}[thm]{Definition}
\newtheorem*{nameddef}{\defname}
\newcommand{\defname}{testing}
\theoremstyle{remark}
\newtheorem{rmk}[thm]{Remark}
\newtheorem{conv}[thm]{Convention}
\begin{document}

	\begin{abstract}
		The perturbed Alexander invariant $\rho_1$, defined by Bar-Natan and van der Veen, is a powerful, easily computable polynomial knot invariant with deep connections to the Alexander and colored Jones polynomials. We study the behavior of $\rho_1$ for families of knots $\{K_t\}$ given by performing $t$ full twists on a set of coherently oriented strands in a knot $K_0 \subset S^3$. We prove that as $t \to \infty$ the coefficients of $\rho_1$ grow asymptotically linearly, and we show how to compute this growth rate for any such family. As an application we give the first theorem on the ability of $\rho_1$ to distinguish knots in infinite families, and we conjecture that $\rho_1$ obstructs knot positivity via a ``perturbed Conway invariant.'' Along the way we expand on a model of random walks on knot diagrams defined by Lin, Tian and Wang.
	\end{abstract}

	\maketitle
	\tableofcontents
	
	\section{Introduction}
	
	Let $K$ be an oriented knot in $S^3$, and $U \subset S^3$ an oriented disk intersecting $K$ in $n$ points. We will consider the set of knots $\{K_t\}_{t \geq 0}$, where $K_0 = K$ and $K_t \subset S^3$ is the result of performing $1/t$ surgery on $\partial U$ for $t > 0$. Equivalently, $K_t$ is given by inserting $t$ full twists in a regular neighborhood of $K \cap U$, as in Figure \ref{fig:full_twist}. We call such $\{K_t\}$ a {\em family of twisted knots}.
	
	Numerous authors have studied how invariants of $K_t$ behave as $t \to \infty$. Most famously, Thurston's hyperbolic Dehn surgery theorem implies that if $K \cup \partial U$ is a hyperbolic link, then the geometry of $S^3 - K_t$ asymptotically approaches the geometry of $S^3 - (K \cup \partial U)$ \cite{thu22}. Analogously, Silver and Williams proved that as $t \to \infty$, the Mahler measure of the Alexander polynomial $\Delta_{K_t}$ converges to the Mahler measure of $\Delta_{K \cup \partial U}$ \cite{sw04}. The behavior of the Jones polynomial under twisting has been studied by Champanerkar, Kofman and Yokota \cite{ck05, yok91}, and Lee has proven related results on Khovanov homology \cite{lee23}. Additionally, see \cite{roz10, wil21, aza23, che22}.
	
	Here, we examine the asymptotic effect of twisting on the {\em perturbed Alexander invariant} $\rho_1$.~$\rho_1$ is a knot invariant valued in $\Z[T,T^{-1}]$, recently defined by Bar-Natan and van der Veen \cite{bnvdv22, bnvdv21} and conjecturally related to work of Rozansky and Overbay \cite{roz96, roz98, ove13}. To study $\rho_1$ we restrict ourselves to twisted families of knots in which each point of $K \cap U$ has the same sign---in this case we say a regular neighborhood of $K \cap U$, or the family $\{K_t\}$, is {\em coherently oriented}. We prove the coefficients of the polynomials $\{\rho_1(K_t)\}$ grow asymptotically linearly and that this growth rate converges to a rational function.
	\begin{thm}
		\label{thm:main}
		For a family of knots $\mathcal{K} = \{K_t\}$ as above, twisted along $n$ coherently oriented parallel strands, define
		$$
		d_t(\mathcal{K}) = T^{tn(n - 1)} \Big( T^{n(n - 1)}\rho_1(K_{t + 1}) - \rho_1(K_t) \Big).
		$$
		Then as $t \to \infty$, $d_t$ converges to a non-polynomial rational function in $T$. The limit, considered as a rational function, satisfies
		$$
		(\lim_{t \to \infty} d_t(\mathcal{K}))|_{T = 1} = \pm \frac{n - 1}{2n}.
		$$
	\end{thm}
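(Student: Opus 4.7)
The plan is to reduce the asymptotics of $\rho_1(K_t)$ to a transfer-matrix computation on the twist region. In the Bar-Natan–van der Veen construction, $\rho_1(K)$ is built from a tangle diagram as a rational expression in a matrix assembled from local crossing data. Writing $K_t$ as the closure of a fixed outer tangle coming from $K_0$ joined with the $t$-fold full-twist tangle on the $n$ coherently oriented strands, the twist region should contribute $B^t$ for a fixed $T$-dependent transfer matrix $B$ representing a single full twist. Making this reduction precise and identifying $B$ explicitly is the first step.

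The second step is a spectral analysis of $B$. Because the strands are coherently oriented, $B$ is equivariant under the $S_n$-action permuting them, which breaks $B$ into isotypic blocks whose spectra can be analyzed separately. I expect $B$ to have a distinguished dominant eigenvalue $\mu$ coming from the trivial isotypic piece, reflecting the $n(n-1)$ crossings of a full twist and scaling as $T^{\pm n(n-1)}$ (with direction fixed by the sign convention of $\rho_1$). Combining the resulting spectral decomposition with the Bar-Natan–van der Veen formula yields an expansion
\[
\rho_1(K_t) = \mu^t\, p_\infty(T) + \sum_{\lambda \neq \mu} \lambda^t\, p_\lambda(T)
\]
with each $p_\lambda(T)$ a fixed rational function. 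A direct computation then gives $d_t(\mathcal{F}) = \sum_{\lambda \neq \mu}(T^{n(n-1)}\lambda - 1)(T^{n(n-1)}\lambda)^t p_\lambda(T)$, and the prefactor $T^{tn(n-1)}$ is designed precisely to kill the $\mu^t$-summand while simultaneously ensuring the subdominant sum stabilizes to a fixed rational function. Non-polynomiality of the limit then follows from checking that at least one subdominant $p_\lambda$ carries a genuine pole in $T$.

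For the evaluation at $T = 1$, the key observation is that $T^{n(n-1)}\lambda - 1$ vanishes at $T = 1$ precisely when $\lambda$ specializes to $1$ there, so the $T = 1$ value is controlled by those eigenvalues of $B$ satisfying $\lambda(1) = 1$ together with their first-order behavior near $T = 1$. At $T = 1$ the matrix $B$ collapses to a simple operator arising from the standard representation of the full-twist braid on the coherent $n$-strand block, whose spectrum consists of the eigenvalue $n - 1$ on the trivial isotypic piece and $-1$ with multiplicity $n - 1$ on the standard isotypic piece. Combining this with the $\tfrac{1}{2n}$-normalization built into the definition of $\rho_1$ should produce the claimed value $\pm (n - 1)/(2n)$, with the sign fixed by the writhe and orientation conventions used to close the tangle.

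The principal obstacle is extracting the matrix $B$ and its spectral data cleanly from the Bar-Natan–van der Veen formalism---in particular, pinning down the dominant eigenvalue, establishing that it has algebraic multiplicity one, showing that $p_\infty(T)$ is nonzero, and verifying that enough subdominant residues contribute genuine poles to force the limit to be non-polynomial. Once these spectral statements are in place, both the convergence of $d_t$ and the $T = 1$ evaluation reduce to finite-dimensional linear algebra.
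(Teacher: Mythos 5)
Your high-level instinct---isolate the twist region, exploit the algebraic structure of a full twist on $n$ coherent strands, and study asymptotics in $t$---is the right one and is shared by the paper. However, the framework you propose has several concrete problems that would derail the argument.

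The central gap is the posited expansion
\[
\rho_1(K_t) \;=\; \mu^t\, p_\infty(T) \;+\; \sum_{\lambda \neq \mu} \lambda^t\, p_\lambda(T).
\]
No such expansion holds, and the reason is structural: the Bar-Natan--van der Veen formula sums a crossing-local quantity $R_1(c)$ over \emph{all} crossings of the diagram, and the twist region contributes $tn(n-1)$ crossings, so the number of summands grows linearly with $t$. Moreover, each summand is quadratic in the entries of the Green's matrix $G_t = (I - A_t)^{-1}$, and $A_t$ itself has dimension growing linearly in $t$. A fixed-size transfer matrix $B$ with $\rho_1(K_t)$ depending on $B^t$ only through a finite spectral decomposition simply cannot account for this growth. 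To even get a fixed-size matrix one must first \emph{contract} the twist region into a single vertex---this is exactly what the paper's Markov-chain contraction (Propositions \ref{thm:cmc} and \ref{thm:cmc_det}) does, and it is a nontrivial step requiring that contraction preserve both the Green's function and the determinant $\det(I - A)$. Even after contraction, the $O(t)$ crossings in the twist region must be handled: the paper's Lemma \ref{lem:tech} shows that crossings in the \emph{middle} of the twist region asymptotically all contribute the same quantity $\widetilde R_1(c_k(\tau_\infty))$, while the boundedly many ``end'' crossings contribute something that cancels in the difference $d_t$. Your proposal has no analogue of either of these two mechanisms, and the difference $d_t$ does not take the clean form $\sum_{\lambda \neq \mu}(T^{n(n-1)}\lambda - 1)(T^{n(n-1)}\lambda)^t p_\lambda$.

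Two secondary issues. First, the claimed $S_n$-equivariance of the full-twist Burau matrix is false: $\psi(\Omega_n)_{ij} = T^{j-1}(1-T)$ for $i \neq j$ depends on $j$, so $\psi(\Omega_n)$ does not commute with permutation matrices. (It \emph{is} a rank-one perturbation of a scalar matrix, hence has a simple two-eigenvalue spectrum $\{1, T^n\}$, which is perhaps what you are after, but this is not an isotypic decomposition.) Second, your claimed spectrum at $T=1$ is incorrect: at $T=1$ all the off-diagonal entries of $\psi(\Omega_n)$ vanish and the matrix degenerates to the identity, with spectrum $\{1\}$ of multiplicity $n$, not $\{n-1, -1^{n-1}\}$. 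The paper's $T=1$ argument (Proposition \ref{thm:rho_limit}) instead works directly with the walk model: at $T=1$ no jumps occur at crossings, walks are counted by the number of passes through the infinite twist vertex (each with weight $1/n$), and an explicit geometric-series calculation gives $g(s,t) = \frac{n-1}{n}$ or $1 + \frac{n-1}{n}$ and hence $R_1(c)|_{T=1} = -1/2$ for every twist-region crossing. Finally, your non-polynomiality argument (``at least one subdominant $p_\lambda$ carries a genuine pole'') is not needed once the $T=1$ evaluation is in hand: a Laurent polynomial with integer coefficients evaluates to an integer at $T=1$, while $\pm(n-1)/(2n)$ is never an integer for $n \geq 2$, so non-polynomiality follows immediately.
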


	\begin{figure}[t]
		\includegraphics[height=1.3cm]{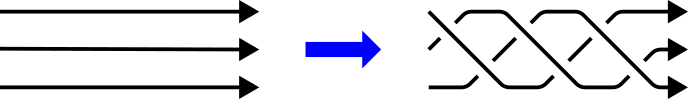}
		\caption{Performing a full twist on three coherently oriented strands}
		\label{fig:full_twist}
	\end{figure}

	An analogous result holds as $t \to -\infty$, but we focus on the positive case for brevity. We call the limit $\lim_{t \to \infty} d_t(\mathcal{K})$ the {\em asymptotic growth rate} of $\rho_1$ for the family $\mathcal{K}$; for our precise notion of convergence see Definition \ref{def:stab} below.
	
	The family of $(2,q)$-torus knots $\{{\bf T}(2,2t + 1)\}_{t \geq 0}$ provides a concrete example of this convergence. Their $\rho_1$ invariants for small $t$ are:
	{\small
		\begin{align*}
			\rho_1(\text{unknot}) &= 0 \\
			\rho_1(\textbf{T}(2,3)) &= T^{-2}(-1 + 2T - 2T^2 + 2T^3 - T^4) \\
			\rho_1(\textbf{T}(2,5)) &= T^{-4}(-2 + 4T - 5T^2 + 6T^3 - 6T^4 + 6T^5 - 5T^6 + 4T^7 - 2T^8) \\
			\rho_1(\textbf{T}(2,7)) &= T^{-6}(-3 + 6T - 8 T^2 + 10 T^3 - 11 T^4 + 12 T^5 - 12T^6 + 12T^7 - 11T^8 + 10T^9 \\
				& \ \ \ \ \ \ \ \ \ \ - 8 T^{10} + 6 T^{11} - 3 T^{12})
		\end{align*}
	}
	
	From these data points, the asymptotic growth rate of $\rho_1$ appears to be
	$$
		-1 + 2T - 3T^2 + 4T^3 - \cdots =-\frac{1}{(1 + T)^2}.
	$$
	Indeed, we prove:
	\begin{thm}
		\label{thm:example}
		The asymptotic growth rate of $\rho_1$ for the family of $(2,q)$-torus knots is given by
		$$
		\lim_{t \to \infty} d_t(\{{\bf T}(2, 2t + 1)\}) = -\frac{1}{(1 + T)^2}.
		$$ 
	\end{thm}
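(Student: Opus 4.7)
The plan is to apply Theorem~\ref{thm:main} to the concrete family $\mathcal{F} = \{\mathbf{T}(2, 2t+1)\}$. Take $K_0$ to be the unknot, presented as the closure of a one-strand braid, and let $U$ be an oriented disk meeting $K_0$ transversely in $n = 2$ coherently oriented points, so that $K_t = \mathbf{T}(2, 2t+1)$. Theorem~\ref{thm:main} already guarantees that $d_t(\mathcal{F})$ converges to a rational function $R(T)$ with $R(1) = \pm 1/4$, consistent with the candidate $-1/(1+T)^2\big|_{T=1} = -1/4$; what remains is to identify $R(T)$ explicitly as $-1/(1+T)^2$.

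The first step is to extract from the proof of Theorem~\ref{thm:main} a workable closed form for $\rho_1(\mathbf{T}(2, 2t+1))$ in the two-strand case. I expect this to take the shape $\mathbf{v}^{\top} A^t \mathbf{w}$ for a small matrix $A$ with entries in $\mathbb{Z}[T, T^{-1}]$, built from the local Bar-Natan--van der Veen $\rho_1$-data of a full twist on two coherently oriented strands, with $\mathbf{v}, \mathbf{w}$ coming from the (trivial) complementary tangle. For $n = 2$ this matrix should be small enough that its characteristic polynomial, eigenvalues, and Jordan form can be computed by hand.

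With the closed form in hand, one substitutes into
\[
 d_t(\mathcal{F}) = T^{2t}\bigl(T^2 \rho_1(K_{t+1}) - \rho_1(K_t)\bigr) = T^{2(t+1)} \rho_1(K_{t+1}) - T^{2t} \rho_1(K_t),
\]
which is manifestly a polynomial in $T$. The sample values in the introduction give, for instance, $d_1 = -1 + 2T - 3T^2 + 4T^3 - 5T^4 + 6T^5 - 5T^6 + 4T^7 - 2T^8$, whose leading coefficients already match the Maclaurin expansion $-\sum_{k \geq 0}(-1)^k(k+1)T^k$ of $-1/(1+T)^2$. The closed form should show that these leading coefficients stabilize to that expansion as $t \to \infty$, which by Theorem~\ref{thm:main} pins down $R(T)$.

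The main obstacle I anticipate is the squared factor in the denominator: $(1+T)^2$ signals that the relevant transfer matrix $A$ has a repeated eigenvalue (essentially $-T^{-1}$, up to normalization) rather than distinct ones, so the closed form for $\rho_1(K_t)$ includes a ``Jordan-block'' term of the form $t \cdot \lambda^t$ in addition to the pure geometric $\lambda^t$. The numerology supports this: the constant term of $T^{2t}\rho_1(K_t)$ in the table is $-t$, so an arithmetic factor of $t$ is already forced on us by the data. Tracking this factor carefully through the cancellation in the definition of $d_t$ is the delicate bookkeeping step, and it is precisely the mechanism that produces the squared denominator $(1+T)^2$ in the limit.
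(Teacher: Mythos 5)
Your plan takes a genuinely different route from the paper's, and as written it has a real gap: the key computational step is never carried out, and the mechanism you propose is unlikely to be the right one.

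The paper's proof of \refthm{example} is a direct finite computation from Theorem~\ref{prop:main}, which gives the limit explicitly as
$$
\lim_{t \to \infty} d_t = T^{-\varphi(D_0) - w(D_0)} \sum_{k = 1}^{n(n-1)} \widetilde{R}_1\big(c_k(\tau_\infty)\big),
$$
where $\tau_\infty$ is the single full twist sandwiched between two infinite twist vertices in the Markov chain $D^\tau_\infty$ (Definitions~\ref{def:twist_vert} and~\ref{def:spec_diags}). For $n = 2$ this means writing down the $11 \times 11$ transition matrix $A^\tau_\infty$ for the diagram in Figure~\ref{fig:d_tau_inf}, inverting $I - A^\tau_\infty$, and plugging into (\ref{eq:new_r_adj}). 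Nothing needs to be computed as a function of $t$; the infinite twist vertex already encodes the $t \to \infty$ limit. Your plan instead seeks a closed form for $\rho_1(\mathbf{T}(2,2t+1))$ itself as a function of $t$ and then differences and takes a limit --- logically valid, but strictly harder, since it essentially reproves the stabilization already contained in Theorem~\ref{prop:main}, and you never produce the closed form. Matching the first few coefficients of $d_1$ against $-1/(1+T)^2$ is suggestive but is not a proof; positive stabilization (Definition~\ref{def:stab}) gives no control over how large $t_0$ must be before a given coefficient has settled, so agreement at $t = 1$ carries no logical weight.

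Two more specific concerns. First, the shape $\mathbf{v}^\top A^t \mathbf{w}$ cannot be right as stated: $\rho_1$ is built from $R_1(c)$, which is \emph{quadratic} in the Green's matrix entries, and is then summed over a set of $O(t)$ crossings; a bilinear form in a single transfer matrix power captures neither the quadratic dependence nor the growing crossing count. Second, the $(1+T)^2$ denominator and the linear-in-$t$ growth do not signal a repeated eigenvalue of any transfer matrix. The transition data for the twist region converges (Proposition~\ref{thm:twist_stab}) rather than exhibiting Jordan-block growth; the linear term in $t$ arises because the twist region contributes $n(n-1)t$ crossings, each of which contributes an asymptotically constant amount $\widetilde{R}_1(c_k(\tau_\infty))$ to the sum defining $\rho_1$. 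The factor $(1+T)$ enters through the infinite twist vertex weights $T^{j-1}/(1+T)$ for $n = 2$, and it appears squared because $R_1$ is quadratic in the Green's function. Correcting the mechanism matters here, because the transfer-matrix/Jordan-block picture would lead you to look in the wrong place for the source of the limit.
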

	Moreover, we show how the asymptotic growth rate can be easily computed for any coherently oriented, twisted family of knots.
	
	Since the asymptotic growth rate is always non-zero by Theorem \ref{thm:main}, we also obtain:
	\begin{cor}
		\label{cor:distinguishing}
		Let $\{K_t\}_{t \geq 0}$ be a coherently oriented family of twisted knots. Then $\rho_1$ distinguishes infinitely many knots in this family. In fact, for some $t_0 \in \N$, the polynomials $\{\rho_1(K_t)\}_{t \geq t_0}$ are all distinct.
	\end{cor}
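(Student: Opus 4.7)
The plan is to show that the minimum Laurent degree of $\rho_1(K_t)$ is a strictly decreasing function of $t$ for $t$ large, which immediately forces the polynomials $\rho_1(K_t)$ themselves to be pairwise distinct. To that end, set $P_t := T^{tn(n-1)} \rho_1(K_t) \in \Z[T, T^{-1}]$; the definition of $d_t$ in \refthm{main} rearranges to $P_{t+1} - P_t = d_t(\mathcal{F})$, so
$$
P_t \;=\; P_0 \;+\; \sum_{j=0}^{t-1} d_j(\mathcal{F}).
$$

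I would interpret the convergence in \refthm{main} as coefficient-wise stabilization of Laurent series around $T = 0$: for each integer $k$, the $T^k$ coefficient of $d_j$ equals the $k$th Laurent coefficient $c_k$ of $L := \lim d_t$ for all $j$ sufficiently large. This is consistent both with the torus knot data in the introduction and with the fact that $L|_{T = 1}$ is computed as the value of a rational function (rather than as $\lim d_t(1)$, which can vanish identically, as one sees for the torus knot family). Plugging this stabilization into the telescoping formula shows that, for $t$ large, $(P_t)_k$ is affine in $t$ with slope $c_k$. Hence $(P_t)_k$ is eventually constant when $c_k = 0$ and eventually nonzero when $c_k \neq 0$. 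Because $P_0$ and each $d_j$ are Laurent polynomials, and (under the stabilization above) the $d_j$ have mindeg uniformly bounded below for $j$ large, these constants vanish for all $k$ sufficiently negative, so the set of $k$ with $(P_t)_k$ eventually nonzero is bounded below; denote its minimum by $m^\ast$.

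For $t$ sufficiently large the minimum Laurent degree of $P_t$ is exactly $m^\ast$, so the minimum Laurent degree of $\rho_1(K_t) = T^{-tn(n-1)} P_t$ is $m^\ast - tn(n-1)$, strictly decreasing in $t$ (using $n \geq 2$, so that $n(n-1) > 0$; the cases $n \leq 1$ are trivial since then $K_t = K_0$ for all $t$). Distinct minimum degrees force $\rho_1(K_t)$ to be pairwise distinct for $t \geq t_0$, and the weaker ``distinguishes infinitely many knots'' claim follows. The one step I would be most careful about is identifying the convergence of \refthm{main} with coefficient-wise Laurent-series stabilization around $T = 0$; that should fall out of Definition~\ref{def:stab} by inspection, but it is the only non-routine ingredient in the sketch.
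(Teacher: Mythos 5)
Your argument is correct and supplies the detail the paper leaves implicit: the corollary is stated as an immediate consequence of the nonvanishing growth rate, and your telescoping-plus-minimum-degree analysis is the natural way to make that deduction precise. One minor quibble: for $n \le 1$ the conclusion of the corollary actually \emph{fails} (all $K_t$ coincide), so it is not that those cases are trivially true but rather that the corollary, like Theorem~\ref{thm:main}, implicitly requires $n \ge 2$; your argument for $n \ge 2$ is what is wanted.
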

	Corollary \ref{cor:distinguishing} is a modest result since its conclusion also applies to the Alexander polynomial, but it is the first result we know of on the ability of $\rho_1$ to distinguish knots in infinite families.
	
	Our primary motivation in writing this paper was understanding the striking pattern of Theorem \ref{thm:example}, but there are many additional reasons why the invariant $\rho_1$ warrants study. Bar-Natan and van der Veen show $\rho_1$ is computable in polynomial time, unlike the Jones polynomial or any knot homology theory as of this writing. In spite of this the pair of invariants $(\Delta, \rho_1)$, $\Delta$ the Alexander polynomial, is superior to the HOMFLY-PT polynomial and Khovanov homology combined at distinguishing knots with up to 11 crossings \cite{bnvdv22}\cite[Theorem 50]{bnvdv21}. The polynomial $\rho_1$ also has deep connections to both the Alexander and colored Jones polynomials---as its name suggests, the perturbed Alexander invariant $\rho_1$ is one of an infinite family of polynomial knot invariants resulting from perturbing a quantum group associated to $\Delta$. The full family of perturbed Alexander invariants determines all colored Jones polynomials for any knot \cite{bnvdv21}. Thus, $\rho_1$ is a promising candidate for building bridges between classical and quantum topology.
	
	\subsection{Random walks, and full twists on braids}
	
	In contrast to the Alexander polynomial, very little machinery exists to study the $\rho_1$ invariant. Thus we develop novel tools and techniques, and our proofs of Theorems \ref{thm:main} and \ref{thm:example} rely on two sets of secondary results which may be of independent interest. First, we build on a model of random walks on tangle diagrams introduced by Lin, Tian and Wang \cite{ltw98} following a remark of Jones \cite{jon87}. Lin, Tian and Wang's framework associates a Markov chain to any tangle diagram, and the Alexander polynomial and $\rho_1$ invariant can both be defined as sums over certain expected values in this Markov chain \cite{lw01, bnvdv22}.
	
	Broadly speaking, we prove Theorem \ref{thm:main} by showing that the relevant expected values in any tangle Markov chain stabilize under twisting. To do this, we define a contraction operation on such Markov chains in \Cref{sec:cmc} below. In Propositions \ref{thm:cmc} and \ref{thm:cmc_det}, we show that our contraction operation is natural in the sense that it preserves many properties of the original chain. Contracting allows us to consider subdiagrams of a given knot diagram---for example, a twist region---and draw global conclusions about its $\rho_1$ invariant. We believe this tool will be helpful in further study of $\rho_1$ and the higher order perturbed Alexander invariants defined in \cite{bnvdv21}.
	
	In our second group of supplementary results, in \Cref{sec:burau}, we compute the (unreduced) Burau representations of powers of full twists in the $n$-strand braid group $B_n$, for all $n$. The Burau representation is related to the random walks model in the following way: for any braid $\beta \in B_n$, if $\psi(\beta) \in \text{GL}_n(\Z[T, T^{-1}])$ denotes its Burau representation, then the probability of a random walk entering $\beta$ at the $i$th strand and exiting at the $j$th strand is equal to $\psi(\beta)_{ij}$ \cite{ltw98}. In the process of proving the stabilization results above, we prove that the sequence of Burau representations $\{\psi(\Omega_n^k)\}_k$ stabilizes as $k \to \infty$ for all $n$, where $\Omega_n \in B_n$ denotes a positive full twist. In fact, we give the limit
	$$
	\psi(\Omega_n^\infty) = \lim_{k \to \infty} \psi(\Omega_n^k)
	$$
	explicitly as a matrix of rational functions. As we mention in \Cref{rem:roz} below, it is interesting to compare this result to Rozansky's observation that the Jones-Wenzl idempotent of the Temperley-Lieb algebra can be defined as the limit of Temperley-Lieb representations of the sequence $\{\Omega^k_n\}_k$ \cite{roz14}. In this context, our matrix $\psi(\Omega_n^\infty)$ can viewed as an analogue of the Jones-Wenzl idempotent for the Burau representation.

	\subsection{A perturbed Conway invariant}
	
	In this section, which is independent from the rest of the paper, we discuss a natural question arising from our work. Here only, we assume two properties of the invariant $\rho_1$ for any knot $K \subset S^3$:
	\begin{itemize}
		\item $\rho_1(K)$ is divisible by $(1 - T)^2$.
		\item $\rho_1(K)$ is symmetric, i.e.~satisfies $\rho_1(K)(T) = \rho_1(K)(T^{-1})$.
	\end{itemize}
	Both properties have been conjectured by Bar-Natan and van der Veen  \cite{bnvdv21} and have been verified for knots with up to 11 crossings. Assuming these, we define a symmetric Laurent polynomial by
	$$
	\rho_1^\text{red}(K) = \frac{T}{(1 - T)^2} \cdot \rho_1(K)
	$$
	for any knot $K$.
	
	 As the previous section discusses, our proofs rely on a model of random walks on knot diagrams. The phrase ``random walk'' is used loosely, however, since in most cases this framework involves ``probabilities'' that do not lie in the unit interval. An oriented knot diagram $D \subset S^2$ gives an honest random walk model precisely when it is {\em positive}, i.e.~every crossing has a positive sign. A knot admitting such a diagram is also called {\em positive}, and it is thus natural to expect the perturbed Alexander invariant to obstruct positivity.
	
	For any knot $K$, the {\em Conway polynomial} $\nabla_K(z) \in \Z[z]$ is the unique polynomial satisfying
	$$
	\nabla_K(x - x^{-1}) = \Delta_K(x^2),
	$$
	where $\Delta_K$ is the symmetrized Alexander polynomial. Cromwell proved that if $K$ is positive then the coefficients of $\nabla_K$ are non-negative \cite[Corollary 2.1]{cro89}. This motivates the following definition and conjecture.
	
	\begin{defn}
		For any knot $K \subset S^3$, the {\em perturbed Conway invariant} $\delta_1 \in \Z[z]$ is the unique polynomial satisfying
		$$
		\delta_1(K)(x - x^{-1}) = \rho^\text{red}_1(K)(x^2).
		$$
	\end{defn}
	The existence and uniqueness of $\delta_1$ are easy to check using the assumptions above.
	
	\begin{conj}
		\label{conj:positivity}
		If $K \subset S^3$ is a positive knot, then the coefficients of $\delta_1(K)$ are all negative or zero.
	\end{conj}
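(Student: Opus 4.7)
The plan is to adapt Cromwell's Seifert-algorithm argument for the Conway polynomial \cite{cro89} to the perturbed setting, using the random walks framework. Fix a positive diagram $D$ of $K$. The key starting observation, already highlighted in the preceding subsection, is that on a positive diagram the Lin--Tian--Wang transition weights are honest (non-negative) rational functions of $T$, so the state-sum expression for $\rho_1$ coming from Bar-Natan and van der Veen \cite{bnvdv22, bnvdv21} becomes a sum of contributions with controlled sign, up to a single global factor.

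The next step is to pass to the Conway variable via $T = x^2$ and $z = x - x^{-1}$. The identity $1 - T = -x(x - x^{-1}) = -xz$ gives $T/(1 - T)^2 = 1/z^2$, so $\delta_1(K)(z)$ is the unique polynomial satisfying $z^2 \delta_1(K)(z) = \rho_1(K)(x^2)$. The assumed symmetry of $\rho_1$ forces $\delta_1(K)$ to be even in $z$, and its assumed divisibility by $(1-T)^2$ makes the quotient well-defined. The sign we need to track in the conjecture is therefore the sign attached to each even power of $z$ in the expansion of $\rho_1(K)(x^2)/z^2$.

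I would then argue by induction on the number of crossings of $D$, paralleling Cromwell. In his proof, a positive crossing and its oriented smoothing are compared through the Alexander skein relation, reducing a positive diagram to simpler positive diagrams and terminating at connect sums of $(2,q)$-torus knots whose Conway polynomials have explicitly non-negative coefficients. For $\delta_1$ the analogous base case should be accessible: Theorem \ref{thm:example} and its generalization to positive torus families via Theorem \ref{thm:main} give explicit control on $\rho_1$ for $(2,q)$-torus knots, and converting the asymptotic growth rate $-1/(1+T)^2$ to the Conway variable should confirm non-positivity there. The inductive step requires a skein-type recursion for $\rho_1$ compatible with smoothings of positive crossings.

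The main obstacle is precisely this skein step: unlike the Alexander polynomial, $\rho_1$ does not obey a three-term skein relation, but rather a perturbed version involving correction terms coming from the derivative of the $R$-matrix in the Bar-Natan--van der Veen construction. Tracking the signs of these corrections under the change of variable $T = x^2$, $z = x - x^{-1}$, and ensuring they cooperate across all smoothings of a positive diagram, is the central technical challenge. A contraction-of-Markov-chain argument in the spirit of Propositions \ref{thm:cmc} and \ref{thm:cmc_det} may help here, by isolating local positive twist regions (where the computations of \Cref{sec:burau} give explicit control) from the rest of the diagram, thereby reducing the general problem to a finite list of local sign verifications.
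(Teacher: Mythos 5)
The statement you are addressing is a conjecture, not a theorem: the paper offers no proof, only direct numerical verification for knots of at most $10$ crossings. So there is no argument of the authors for your proposal to be measured against, and any claimed proof would itself be a new contribution.

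Your proposal is not such a proof. The algebraic setup is fine --- $T = x^2$, $z = x - x^{-1}$, $1 - T = -xz$, hence $T/(1-T)^2 = 1/z^2$ and so $\rho_1^{\mathrm{red}}(K)(x^2) = z^2\delta_1(K)(z)$, with symmetry of $\rho_1$ forcing $\delta_1$ to be a polynomial in $z^2$ --- but you then name the missing step yourself: Cromwell's induction runs on the three-term Conway skein relation, and $\rho_1$ satisfies no such relation. You describe producing a usable perturbed skein-type recursion and tracking its sign as ``the central technical challenge,'' and you do not resolve it. Since that step is the entire content of the argument, what you have written is a plan of attack, not a proof, consistent with the statement's status as an open conjecture.

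Two subsidiary issues are also worth flagging. First, the base case you point to does not do what you need: Theorems~\ref{thm:main} and~\ref{thm:example} control the asymptotic difference $d_t$ of $\rho_1$ along a twisted family, not $\rho_1(K_t)$ itself. Knowing $d_t \to -1/(1+T)^2$ does not by itself give non-positivity of $\delta_1(\mathbf{T}(2,q))$ for each finite $q$; one would need to sum the telescoping differences together with the initial value and control the signs of every summand, which is not automatic even if the limit has a definite sign. Second, your opening claim that positivity of the diagram makes the state sum for $\rho_1$ ``a sum of contributions with controlled sign'' needs an argument: in formula (\ref{eq:r}), $R_1(c)$ is a quadratic expression in Green's function values with both signs present even when $\sigma = +1$ and all $g_{ij}$ are non-negative, so any sign control must come from cancellations you have not exhibited.
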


	We note the sign difference with Cromwell's result but also remark that unlike the Alexander polynomial, the perturbed Alexander invariant is sensitive to mirroring. We could equivalently have conjectured that if $K$ admits a diagram with all negative crossings then the coefficients of the perturbed Conway invariant are zero or positive.

	We have verified \Cref{conj:positivity} directly for all knots with up to $10$ crossings. Additionally, if Conjecture \ref{conj:positivity} is true, then $\rho_1$ obstructs positivity in cases where the Alexander polynomial fails---the knots $9_{29}$ and $10_{19}$ are two such examples. We also suspect an analog of Conjecture \ref{conj:positivity} may hold for the higher order perturbed Alexander invariants.
	
	\subsection{Further Discussion}
	
	Little is known about the $\rho_1$ invariant, as the previous section suggests. In addition to Conjecture \ref{conj:positivity}, we mention two other problems relevant to our work.
	
	\begin{prob}
		Describe the asympotic behavior of $\rho_1$ for families of twisted knots which are not coherently oriented.
	\end{prob}
	
	\begin{prob}
		\label{prob:tk}
		Find a closed formula for the $\rho_1$ invariant of torus knots.
	\end{prob}

	Conjecturally, $\rho_1$ is a specialization of the {\em 2-loop polynomial}---see \cite[Conjecture 7.11]{bec24}. March\'e and Ohtsuki independently computed the 2-loop polynomial for torus knots \cite{mar04, oht04}, which provides a solution to Problem \ref{prob:tk} if \cite[Conjecture 7.11]{bec24} is true. Nonetheless, it would be interesting to calculate the $\rho_1$ invariant of torus knots directly from the definitions given in \cite{bnvdv21} or \cite{bnvdv22}, which may also affirm \cite[Conjecture 7.11]{bec24} for this case.
	
	\subsection{Outline}
	
	Section \ref{sec:background} reviews the $\rho_1$ invariant and the random walks model of Lin, Tian and Wang. Section \ref{sec:cmc} then defines our contraction operation on tangle Markov chains, and Section \ref{sec:burau} computes the Burau representations of powers of full twists in the braid group. In Section \ref{sec:alex}, as a necessary prerequisite to our main results, we prove a stabilization result for the Alexander polynomial. We then prove Theorems \ref{thm:main} and \ref{thm:example} in Section \ref{sec:rho}, and show how the asymptotic growth rate of $\rho_1$ can be computed for any coherently oriented family of twisted knots. Finally, Section \ref{sec:lemma} contains the proof of a technical lemma used in Section \ref{sec:cmc}.
	
	\subsection{Acknowledgements}
	
	The author would like to thank Ilya Kofman, Josh Greene and an anonymous referee for suggesting improvements to earlier drafts of this paper. The author is also grateful to Dror Bar-Natan, Roland van der Veen, Hans Boden, Abhijit Champanerkar and Dan Silver for encouraging and insightful conversations, and to seminar attendees at MIT and the CUNY Graduate Center for listening to talks on this project while it was a work in progress and giving feedback.
	
	\section{Background and conventions}
	\label{sec:background}
	
	\subsection{Upright diagrams and the $\rho_1$ invariant}
	\label{sec:ud}
	
	A {\em tangle} is a proper embedding $L \hookrightarrow \Sigma \times I$, considered up to ambient isotopy, where $L$ is an oriented 1-manifold and $\Sigma \subset \R^2$ a compact planar surface. A {\em tangle diagram} $D \subset \Sigma$ is the oriented image of a generic projection $L \hookrightarrow \Sigma \times I \to \Sigma$, with over-under information added at crossing points. Additionally, we say the diagram $D \subset \Sigma$ is {\em upright} if, near each crossing point, the two intersecting regions of $D$ are oriented upward in $\R^2$ (i.e.~their $y$-velocity is positive). We denote the set of crossing points in a tangle diagram $D \subset \Sigma$ by $\mathcal{C}$, and we call the connected components of $D - \mathcal{C}$ {\em strands}. We say a strand which intersects $\partial \Sigma$ is {\em incoming} if its orientation points into $\Sigma$, and {\em outgoing} otherwise.
	
	To define the $\rho_1$ invariant for a knot $K \subset S^3$ we represent $K$ by an upright {\em long knot diagram} $D$, an immersed arc in the unit square $I^2$. Let $\mathcal{C}$ be the set of crossing points of $D$, $n = |\mathcal{C}|$, and label the strands of $D$ consecutively from $k = 1$ to $2n + 1$, where $1$ is the incoming strand and $2n + 1$ the outgoing one. Let $\varphi_k$ denote the {\em turning number} of the strand $s_k$, the number of full counterclockwise turns $s_k$ makes in the plane minus the number of clockwise turns, and let $\varphi(D)$ be the total turning number of $D$. These are well-defined integers since $D$ is upright.
	
	\begin{figure}[t]
		\labellist
		\small\hair 2pt
		\pinlabel {$\sigma = +1$} at 63 -20
		\pinlabel {$\sigma = -1$} at 331 -20
		\pinlabel $j$ [tl] at 127 25
		\pinlabel $i^+$ [tl] at 127 125
		\pinlabel $i$ [tr] at 1 25
		\pinlabel $j^+$ [tr] at 1 125
		\pinlabel $i$ [tl] at 393 25
		\pinlabel $j^+$ [tl] at 393 125
		\pinlabel $j$ [tr] at 271 25
		\pinlabel $i^+$ [tr] at 271 125
		\endlabellist
		
		\includegraphics[height=2cm]{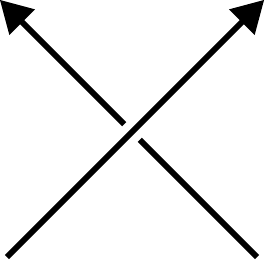}
		\hspace{2cm}
		\includegraphics[height=2cm]{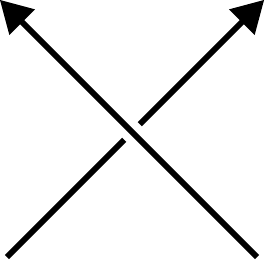}
		\vspace{.25cm}
		\caption{Crossing conventions}
		\label{fig:cc}
	\end{figure}

	\begin{figure}[t]
		\labellist
		\small\hair 2pt
		\pinlabel $1$ [l] at 15 20
		\pinlabel $2$ at 100 70
		\pinlabel $3$ [l] at 15 110
		\endlabellist
		
		\includegraphics[height=3cm]{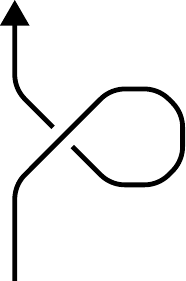}
		\caption{An upright long knot diagram of the unknot}
		\label{fig:kink}
	\end{figure}

	We represent each crossing $c \in \mathcal{C}$ by a triple $(\sigma, i, j)$ where $\sigma$ is the sign of $c$, $i$ is the label of its incoming over-strand, and $j$ is the label of its incoming under-strand---see Figure \ref{fig:cc}. Additionally, we let $i^+$ and $j^+$ be the respective labels of the outoing over- and under-strands of $c$. In this case we have $i^+ = i + 1$ and $j^+ = j + 1$, but it will be convenient later to allow for other possible labelings. For each $c = (\sigma, i, j)$, define the matrix $A_c = (a_{k\ell}) \in \text{M}_{2n + 1}(\Z[T,T^{-1}])$ by
	\begin{align*}
		a_{i,i^+} &= T^\sigma & a_{i, j^+} &= 1 - T^\sigma \\ 
		a_{j, j^+} &= 1 & a_{k\ell} &= 0 \text{ otherwise}.
	\end{align*}
	We then set $A = \sum_{c \in \mathcal{C}} A_c$. For example, the labeled diagram shown in Figure \ref{fig:kink} has the matrix $A$ and $\varphi$ values shown below.
	\begin{align*}
		A &= \begin{bmatrix}
			0 & T & 1 - T \\
			0 & 0 & 1 \\
			0 & 0 & 0
		\end{bmatrix} &
		\varphi_1 = \varphi_3 = 0, \ \varphi_2 = -1
	\end{align*}

	The matrix $I - A$, where $I$ is the identity matrix of appropriate dimension, can be obtained from Fox calculus using the Wirtinger presentation. If $\Delta_K$ is the symmetrized Alexander polynomial of $K$ and $w(D)$ the writhe of $D$, then $I - A$ satisfies
	\begin{equation}
		\label{eq:alex}
		\Delta_K(T) = T^{(-\varphi(D) - w(D))/2}\det(I - A).
	\end{equation}
	In particular $I - A$ is invertible in the ring of rational functions $\Z(T)$, and we denote its inverse by
	$$
	G = (g_{k\ell}) = (I - A)^{-1} \in M_{2n + 1}(\Z(T)).
	$$
	Finally, if a crossing $c \in \mathcal{C}$ is represented by the triple $(\sigma, i, j)$ as above, we define $R_1(c) \in \Z(T)$ by
	\begin{equation}
		\label{eq:r}
		R_1(c) = \sigma(g_{ji}(g_{j^+, j} + g_{j, j^+} - g_{ij}) - g_{ii}(g_{j,j^+} - 1) - 1/2).
	\end{equation}
	
	\begin{defn}{\protect\cite{bnvdv22}}
		\label{def:rho}
		The {\em perturbed Alexander invariant} $\rho_1 \in \Z[T,T^{-1}]$ is given by the formula
		$$
		\rho_1(K) = \Delta_K^2 \Big( \sum_{c \in C} R_1(c) - \sum_{k = 1}^{2n + 1} \varphi_k (g_{kk} - 1/2) \Big),
		$$
		where $\Delta_K$ is the symmetrized Alexander polynomial of $K$.
	\end{defn}

	Since $R_1$ is quadratic in $G$ and $\det(G)$ is a normalization of $1/\Delta_K$, the factor $\Delta_K^2$ in Definition \ref{def:rho} ensures $\rho_1$ is a genuine Laurent polynomial. It is shown in \cite{bnvdv22} that $\rho_1$ is a knot invariant, and for an explanation of the theory underlying $\rho_1$ we refer the reader to \cite{bnvdv21}. In the next subsection we provide some intuition for the matrices $A$ and $G$.
	
	\begin{rmk}
		As this section shows, $\rho_1$ can be most accurately thought of as an invariant of {\em long knots} in the unit cube, which yields a knot invariant by gluing the ends of a given long knot together in $\R^3$. Our definition of $\rho_1$ also extends naturally to $(1,1)$-tangles, i.e.~tangles with one arc conomponent, at least when the matrix $I - A$ is invertible. This latter construction does not give a well-defined link invariant, however, since there is no canonical way to produce a $(1,1)$-tangle from a multi-component link. Defining $\rho_1$ for links remains an open problem.
	\end{rmk}
	
	\subsection{Random walks on tangle diagrams}
	
	The matrices $A$ and $G$ of Section \ref{sec:ud} fit into a model of random walks on tangle diagrams due to Jones \cite{jon87} and Lin, Tian and Wang \cite{ltw98}. In this model a particle (or car \cite{bnvdv22}, or bowling ball \cite{jon87}) placed anywhere on a diagram moves along it in the direction of orientation. When the particle passes over a crossing with sign $\sigma$, it continues on the upper strand with ``probability'' $T^\sigma$ and ``jumps down'' to the lower strand of the crossing with ``probability'' $1 - T^\sigma$. From there it follows the orientation as before. The values $T$ and $T^{-1}$ do not both lie in $[0,1]$ for $T \neq 1$, so these are not probabilities in a strict sense. Nevertheless, this framework has practical applications.
	
	Lin, Tian and Wang's random walk model is an example of a {\em Markov chain} $M = (\mathcal{S}, a)$, which for our purposes consists of a finite set of {\em states} $\mathcal{S} = \mathcal{S}(M)$ and a {\em transition function}
	$$
	a = a_M : \mathcal{S} \times \mathcal{S} \to R,
	$$
	where $R$ is a commutative ring with unity. We think of $a(s,t)$, $s,t \in \mathcal{S}$, as the probability or {\em weight} that the next state of a particle will be $t$ if its current state is $s$.
	
	If the states in $\mathcal{S}$ are indexed $1, \dots, k$, we define a {\em transition matrix} $A = A_M = (a_{ij}) \in \text{M}_k(R)$ by
	$$
	a_{ij} = a(s_i, s_j), \ \ \ \ \ s_i, s_j \in \mathcal{S}.
	$$
	In our random walks framework, the states $\mathcal{S}(D)$ of a tangle diagram $D$ are its strands, the ring $R$ is $\Z[T,T^{-1}]$, and the transition matrix is precisely the matrix $A$ of Section \ref{sec:ud}. Henceforth, we set $R = \Z[T,T^{-1}]$.
	
	For a Markov chain $M = (\mathcal{S}, a)$ and $s,t \in \mathcal{S}$, a {\em walk} $w$ from $s$ to $t$ is a finite sequence of states $s = s_1s_2 \cdots s_k = t$, such that $a(s_i, s_{i + 1}) \neq 0$ for $i = 1, \dots, k - 1$. The {\em length} of the walk is $\ell(w) = k$, and the {\em weight of $w$} is
	$$
	a(w) = a_M(w) = \prod_{i = 1}^{k - 1} a(s_i,s_{i+1}).
	$$
	Note that $a(s_1s_2) = a(s_1,s_2)$. By convention if $\ell(w) = 1$, i.e.~if $w = s = t$, then $a(w) = 1$. The weight of $w$ can be thought of as the probability of $w$ occurring among all length $k$ walks with initial state $s$.
	
	We let $\mathcal{W}_{s,t} = \mathcal{W}_{s,t}(M)$ denote the set of all walks from $s$ to $t$, and we define the {\em Green's function} $g(s,t)$ by
	\begin{equation}
		\label{eq:prob_def}
		g(s,t) = g_M(s,t)= \sum_{w \in \mathcal{W}_{s,t}} a(w)
	\end{equation}
	whenever the righthand sum converges. Similarly, if $\mathcal{S'} \subset \mathcal{S}$, let $\mathcal{W}_{s,t}|_\mathcal{S'} = \mathcal{W}_{s,t}(M)|_\mathcal{S'} $ be the set of walks from $s$ to $t$ which contain only states in $\mathcal{S'}$. Define
	$$
	g|_{\mathcal{S}'}(s,t) =  g_M|_{\mathcal{S'}}(s,t) =  \sum_{w \in \mathcal{W}_{s,t}|_\mathcal{S'}} a(w).
	$$
	We think of $g(s,t)$ as the expected number of times a walk $w$ will contain the state $t$, given that its initial state is $s$.
	
	Lin, Tian and Wang prove the following:
	\begin{lemma}[\protect{\cite[Theorem A, Proposition 2.1]{ltw98}}]
		\label{lem:ltw}
		Let $D \subset I^2$ be a tangle diagram of a $1$-manifold with no closed components. Then there exists an open neighborhood $U$ of $1 \in \C$ such that for all $T \in U$ and all $s,t \in \mathcal{S}(D)$, the sum (\ref{eq:prob_def}) converges absolutely to a rational function in $\Z(T)$. It follows that the function $g$ is well defined in this case.
	\end{lemma}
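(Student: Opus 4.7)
The plan is to identify $g(s,t)$ with the $(s,t)$-entry of the geometric matrix series $\sum_{k \geq 0} A(T)^k$, where $A = A_D$ is the transition matrix of Section \ref{sec:ud}. Partitioning walks from $s$ to $t$ by length gives $\sum_{w \in \mathcal{W}_{s,t},\, \ell(w) = k+1} a(w) = (A^k)_{st}$, and the analogous identity with $|a(w)|$ replaces $A$ by the entrywise absolute-value matrix $|A(T)|$. So absolute convergence of $g(s,t)$ reduces to convergence of $\sum_k |A(T)|^k$; when the series converges, $g(s,t) = ((I - A(T))^{-1})_{st}$, and by Cramer's rule this is a rational function in $\Z(T)$.

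The key step is showing $A$ is nilpotent at $T = 1$. Setting $T = 1$ makes $T^\sigma = 1$ and $1 - T^\sigma = 0$ for either sign, so at each crossing the incoming over-strand $i$ transitions with weight $1$ only to $i^+$ and the incoming under-strand $j$ transitions with weight $1$ only to $j^+$; particles then deterministically follow the orientation of $D$. Since $D$ has no closed components, every such trajectory exits through $\partial I^2$ within $N := |\mathcal{S}(D)|$ steps, forcing $A(1)^N = 0$. In particular every eigenvalue of $A(1)$ is zero, and since $|A(1)| = A(1)$ (the only nonzero entries are $1$), the same holds for $|A(1)|$.

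Because the entries of $A(T)$ are Laurent polynomials in $T$ and eigenvalues depend continuously on matrix entries, there is an open neighborhood $U \subset \C$ of $1$ on which both $A(T)$ and $|A(T)|$ have spectral radius strictly less than $1$. Gelfand's formula then yields entrywise absolute summability of $\sum_k |A(T)|^k$ throughout $U$, hence absolute convergence of $\sum_w |a(w)|$ for all $s, t \in \mathcal{S}(D)$; summing the series identifies $g(s,t)$ with $((I - A(T))^{-1})_{st} = \text{adj}(I - A(T))_{st} / \det(I - A(T))$, a rational function in $\Z(T)$.

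The only conceptual point requiring care is the use of $|A(T)|$ rather than $A(T)$ itself to dominate $\sum_w |a(w)|$, but this is harmless because $|A(1)|$ inherits nilpotency directly from the coincidence $|A(1)| = A(1)$, so the continuity-of-eigenvalues argument applies verbatim to both matrices and no separate estimate is needed.
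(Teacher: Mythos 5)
Your proof is correct, and it makes rigorous the sketch that the paper gives after the lemma statement. The paper does not actually reprove this result---it cites Lin, Tian and Wang and only remarks informally that at $T=1$ walks are deterministic and the series ``is similar enough to a geometric series''---so your argument fills in precisely what that sketch is gesturing at. The central fact, that $A(1)$ is nilpotent because a nonzero-weight walk at $T=1$ must follow the diagram's orientation deterministically and, when $D$ has no closed components, therefore exits within $|\mathcal{S}(D)|$ steps, is identified and used correctly. Continuity of the spectral radius in the matrix entries then gives $\rho(A(T)) < 1$ and $\rho(|A(T)|) < 1$ on a neighborhood of $T=1$, Gelfand's formula yields entrywise convergence of $\sum_k |A(T)|^k$ and hence absolute convergence of $\sum_w |a(w)|$, and Cramer's rule identifies $\sum_k A(T)^k = (I - A(T))^{-1}$ with a rational function in $\Z(T)$ (nonzero denominator since $\det(I - A(1)) = 1$). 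The observation that $|A(1)| = A(1)$, so that nilpotency transfers to the absolute-value matrix without any separate estimate, is the one genuinely delicate step and you handle it exactly right.
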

	If $D$ is a tangle diagram with no closed link components, then certainly (\ref{eq:prob_def}) converges when $T = 1$. At $T = 1$ any walk containing a jump has weight zero, and in a diagram with no closed link components there is at most one jump-less walk between any pair of states. (This is the walk given just by following the diagram.) Lin, Tian and Wang essentially show the series (\ref{eq:prob_def}) centered at $T = 1$ is similar enough to a geometric series to have a positive radius of convergence.
	
	For any Markov chain $M$ with indexed states $\mathcal{S} = \{s_1, \dots, s_k\}$ and transition matrix $A$, if $I - A$ is invertible we define
	$$
	G = G_M = (g_{ij}) := (I - A)^{-1}
	$$
	as in Section \ref{sec:ud}. Following Bar-Natan and van der Veen \cite{bnvdv22}, we call the matrix $G$ a {\em Green's matrix} for the Markov chain. The next lemma explains this name; cf.~\cite[Proposition 5]{bnvdv22}.
	
	\begin{lemma}
		\label{lem:path_conv}
		Let $M$ be a Markov chain with transition function $a : \mathcal{S}(M) \to \Z(T)$. If an evaluation $T \in \C$ is chosen so that $g(s_i,s_j)$ converges at for all $s_i,s_j \in \mathcal{S}(M)$, then $I - A$ is invertible and
		$$
		g(s_i,s_j) = g_{ij}
		$$
		for all $i$ and $j$. In particular the above holds for all tangle diagrams with no closed components, for all $T$ in an open neighborhood of $1$.
	\end{lemma}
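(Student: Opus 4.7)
The plan is to identify the Green's function sum with a matrix geometric series in $A$ and then invoke the standard Neumann series argument. First I would observe that walks from $s_i$ to $s_j$ of length $k+1$ correspond bijectively to sequences $(s_i, s_{\ell_1}, s_{\ell_2}, \dots, s_{\ell_{k-1}}, s_j)$ with each consecutive transition weighted by $a$. By definition of matrix multiplication,
$$
(A^k)_{ij} = \sum_{\ell_1, \dots, \ell_{k-1}} a_{i,\ell_1} a_{\ell_1,\ell_2} \cdots a_{\ell_{k-1}, j} = \sum_{w \in \mathcal{W}_{s_i, s_j},\, \ell(w) = k + 1} a(w),
$$
where the length-one walk $w = s_i$ (when $i = j$) contributes the term $(A^0)_{ii} = 1$ matching the convention $a(w) = 1$. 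Grouping walks by length then gives the formal identity
$$
g(s_i, s_j) = \sum_{k \geq 0} (A^k)_{ij}.
$$

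Next I would use the hypothesis that this sum converges absolutely at the chosen $T \in \C$ for every pair $(i,j)$. Since the entries of $A^k$ tend to zero and the state space is finite, $A^k \to 0$ as a matrix. Writing the partial sum $S_N = \sum_{k = 0}^{N} A^k$, a direct computation gives $(I - A)S_N = S_N(I - A) = I - A^{N + 1}$. Taking $N \to \infty$ and using $A^{N+1} \to 0$, the limit $S_\infty = \lim_N S_N$ satisfies $(I - A)S_\infty = S_\infty (I - A) = I$, so $I - A$ is invertible with $G = (I - A)^{-1} = S_\infty$. Reading off the $(i,j)$ entry gives $g_{ij} = \sum_{k \geq 0} (A^k)_{ij} = g(s_i, s_j)$.

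Finally, for tangle diagrams with no closed components the hypothesis on convergence is supplied by Lemma~\ref{lem:ltw}, which provides a neighborhood $U$ of $1 \in \C$ on which $g(s,t)$ converges absolutely for every pair of strands. The main obstacle is really just being careful about the identification of walks with matrix powers and handling the length-one walks via the convention $a(w) = 1$; the rest is the standard Neumann series argument, which is clean in our setting because finite-dimensionality makes entrywise convergence equivalent to norm convergence, so no subtle analytic estimates are needed beyond what the hypothesis already provides.
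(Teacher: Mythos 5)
Your proof is correct and follows essentially the same approach as the paper's: identify the weighted walk sum with $\sum_k (A^k)_{ij}$ and invoke the Neumann series. You are somewhat more careful than the paper in spelling out why convergence of the entrywise series forces $A^N \to 0$ and hence invertibility of $I - A$, a point the paper's proof passes over quickly, but the underlying argument is the same.
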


	\begin{proof}
		An inductive argument shows that for all $s_i, s_j \in \mathcal{S}(M)$ and all $k \geq 1$,
		$$
		\sum_{\{w \in \mathcal{W}_{s_i,s_j} \mid \ell(w) = k\}} a(w) = (A^{k - 1})_{ij}.
		$$
		Thus
		$$
		g(s_i,s_j) = \sum_{k = 1}^\infty (A^{k - 1})_{ij} = \big(\sum_{k = 1}^\infty A^{k - 1} \big)_{ij}.
		$$
		Since $g(s_i,s_j)$ is well defined for all $i$ and $j$, the geometric series on the right converges at the chosen value of $T$ and we have
		$$
		g(s_i,s_j) = \big(\sum_{k = 1}^\infty A^{k - 1} \big)_{ij} = (I - A)^{-1}_{ij}
		$$
		as desired.
	\end{proof}
	
	Moving forward we use $g_{ij}$ and $g(s_i,s_j)$ interchangeably, and when dealing with the latter we implicitly choose $T$ so that (\ref{eq:prob_def}) converges absolutely.
	
	\subsection{Simple cycles and the Alexander polynomial}
	\label{sec:hikes}
	
	Let $M = (\mathcal{S}, a)$ be a Markov chain with transition map $a$ valued in $\Z(T)$. Given a finite sequence of states $c = s_1s_2 \cdots s_k \in \mathcal{S}^k$, let $[c]$ denote the equivalence class of $c$ considered up to cyclic permutation of the sequence. Following \cite{lw01, foze99}, define
	$$
	a_\text{circ}([c]) = a(s_1,s_2)a(s_2,s_3) \cdots a(s_{k - 1}, s_k)a(s_k,s_1).
	$$
	Then $a_\text{circ}$ is well defined, and we call $[c]$ a {\em cycle} of $M$ if $c$ is non-empty and $a_\text{circ}([c]) \neq 0$. A cycle is {\em simple} if it does not contain any repeated states.
	
	A {\em multicycle} is a finite set of cycles, and a multicycle $q$ is {\em simple} if each of its cycles is simple and if no state appears in more than one cycle of $q$. For any multicycle $q = \{c_1, \dots, c_k\}$, we define $|q| = k$ and
	$$
	a_\text{circ}(q) = \prod_{i = 1}^k a_\text{circ}(c_i).
	$$
	By convention, the empty multicycle $\varnothing$ is a simple multicycle satsifying $|\varnothing| = 0$ and
	$$
	a_\text{circ}(\varnothing) = 1.
	$$
	
	Let $A$ be a transition matrix for $M$, $G$ a Green's matrix, and let $\mathcal{Q} = \mathcal{Q}(M)$ denote the (finite) set of all simple multicycles of $M$. Then we have the following classical identity:
	\begin{equation}
		\label{eq:cf}
		\sum_{q \in \mathcal{Q}} (-1)^{|q|} a_\text{circ}(q) =  \det(I - A) = \det(G)^{-1}.
	\end{equation}
	This equation follows from seminal work of Cartier and Foata \cite{cafo69} and a proof can be found in \cite[Section 2.3]{giro17}. When $M$ is a markov chain induced by a long knot diagram, the above equation also gives the Alexander polynomial of the knot by (\ref{eq:alex}).
	
	\begin{rmk}
		There is a dual identity to (\ref{eq:cf}) which gives $\det(G)$, rather than $\det(G)^{-1}$, in terms of simple cycles. This alternate identity is used in \cite[Theorem 4.3]{lw01} to study the Alexander polynomial, although its statement in that paper is incorrect. The correct version can be found, for example, in \cite[Section 2.3]{giro17}.
	\end{rmk}
		
	\section{Contracting tangle Markov chains}
	\label{sec:cmc}
	
	In this section we define a contraction operation on tangle Markov chains which will be vital in the proofs of our main theorems. Let $\Sigma \subset \R^2$ be a planar surface and $D \subset \Sigma$ a tangle diagram with no closed link components. Let $U \subset \text{int}(\Sigma)$ be a compact subsurface of $\Sigma$ such that its boundary $\partial U$ is tranverse to $D$ and avoids crossing points. Then $U$ defines two subdiagrams of $D$: $D \cap U$ and $D \cap (\Sigma - U)$. Abusing notation, we denote the former by $U$ and the latter by $D - U$, and we identify the states $\mathcal{S}(U)$ and $\mathcal{S}(D - U)$ with subsets of $\mathcal{S}(D)$ via the inclusions $U, D - U \hookrightarrow D$. The intersection $\mathcal{S}(U) \cap \mathcal{S}(D - U)$ consists precisely of the incoming and outgoing strands of $U$; we label the former by $\mathcal{I}$ and the latter by $\mathcal{O}$.
	
	\begin{defn}
		\label{def:cmc}
		With notation as above, we define a Markov chain $D/U$ as follows:
		\begin{itemize}
			\item The states of $D/U$ are those of $D - U$: 
			$$
				\mathcal{S}(D/U) = \mathcal{S}(D - U).
			$$
			\item The transition function of $D/U$ is defined by
			$$
			a_{D/U}(s,t) = \begin{cases}
				g_D|_{\mathcal{S}(U)}(s,t) & s \in \mathcal{I}, t \in \mathcal{O} \\
				a_D(s,t) & \text{otherwise}
			\end{cases}.
			$$
			
		\end{itemize}
		We call $D/U$ the {\em contraction of $D$ by $U$}.
	\end{defn}

	The Markov chain $D/U$ can be drawn diagrammatically by replacing $U \subset D$ with a junction or vertex, similar to Figure \ref{fig:d_inf} in Section \ref{sec:alex}---this vertex sends particles entering on strands in $\mathcal{I}$ to strands in $\mathcal{O}$ with weights determined by $U$. By definition, the transition function takes values in $\Z(T)$. 
	
	The following two propositions show contracting is a natural operation---the first shows that it preserves expected values.
	\begin{prop}
		\label{thm:cmc}
		The Green's function of $D/U$,
		$$
		g_{D/U} : \mathcal{S}(D/U) \times \mathcal{S}(D/U) \to \Z(T),
		$$
		 is well-defined for all $T \in \C$ in an open neighborhood of $1$. For all $s,t \in \mathcal{S}(D/U) \subset \mathcal{S}(D)$,
		 $$
		 	g_{D/U}(s,t) = g_D(s,t).
		 $$
	\end{prop}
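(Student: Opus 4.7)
The plan is to reduce the identity $g_{D/U}(s,t) = g_D(s,t)$ to a Schur-complement matrix identity and then to a structural claim about walks in $\mathcal{S}(U)$, which will be exactly the content of the technical lemma deferred to \Cref{sec:lemma}. Set $X = \mathcal{S}(D/U) = \mathcal{S}(D-U)$ and $Y = \mathcal{S}(U) \setminus X$, so that $\mathcal{S}(D) = X \sqcup Y$; the transition matrix of $D$ then has block form
\[
A_D \;=\; \begin{pmatrix} A_{XX} & A_{XY} \\ A_{YX} & A_{YY} \end{pmatrix}.
\]
For $T$ in an open neighborhood $V$ of $1 \in \C$, \Cref{lem:ltw} and \Cref{lem:path_conv} guarantee that $I - A_D$ is invertible, and the block-inversion identity gives
\[
\bigl[(I - A_D)^{-1}\bigr]_{XX} \;=\; \bigl(I - A_{XX} - A_{XY}(I - A_{YY})^{-1} A_{YX}\bigr)^{-1},
\]
with $I - A_{YY}$ also invertible on $V$. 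Since the left-hand side is the restriction of $g_D$ to $X \times X$, the proposition reduces to proving the matrix identity $A_{D/U} = A_{XX} + A_{XY}(I - A_{YY})^{-1} A_{YX}$ on $X$.

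First I would dispatch the ``otherwise'' branch of \Cref{def:cmc}. The $(s,t)$-entry of $A_{XY}(I - A_{YY})^{-1} A_{YX}$ sums the weights of walks $s \to y \to \cdots \to y' \to t$ with all intermediate states in $Y$, so it is nonzero only when $s$ admits a transition into $Y$ and some state of $Y$ transitions to $t$. A transition between $X$ and $Y$ can occur only at a crossing in the interior of $U$, since strands in $Y$ do not meet $\partial U$, and the orientations of the boundary strands of $U$ then force the $X$-side of an $X \to Y$ transition to lie in $\mathcal{I}$ and the $X$-side of a $Y \to X$ transition to lie in $\mathcal{O}$. Consequently the product is supported on the $\mathcal{I} \times \mathcal{O}$ block, and outside that block the Schur complement reduces to $A_{XX}$, matching $a_{D/U}(s,t) = a_D(s,t)$.

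The remaining case $s \in \mathcal{I}$, $t \in \mathcal{O}$ is where the real content lies: one must show
\[
g_D|_{\mathcal{S}(U)}(s,t) \;=\; (A_{XX})_{st} + \bigl[A_{XY}(I - A_{YY})^{-1} A_{YX}\bigr]_{st}.
\]
The right-hand side counts walks from $s$ to $t$ that are either of length two or have all intermediate states in $Y$, whereas the left-hand side a priori also allows intermediate states in $\mathcal{I} \cup \mathcal{O}$. The identity therefore reduces to the structural claim that \emph{no walk in $\mathcal{S}(U)$ from $s \in \mathcal{I}$ to $t \in \mathcal{O}$ has an intermediate state in $\mathcal{I} \cup \mathcal{O}$}, i.e.\ once a walk has entered $U$ it cannot exit through the boundary and return. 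I expect this to be the main obstacle of the proof: it requires a careful analysis of possible transitions between boundary strands of $U$ and uses the hypothesis that $D$ has no closed link components. This is the content of the technical lemma deferred to \Cref{sec:lemma}. Granting it, convergence of the series $g_{D/U}$ on $V$ (possibly shrunk) follows from the matrix identity and \Cref{lem:path_conv}, completing the proof.
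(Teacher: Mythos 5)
Your Schur-complement reduction is a genuinely different route from the paper's: the paper defines a map $\pi : \mathcal{W}_{s,t}(D) \to \widehat{\mathcal{W}}_{s,t}(D/U)$ that deletes the $Y$-states from a walk, shows $\mathcal{W}_{s,t}(D/U) \subset \operatorname{Im}(\pi)$, and proves the fiber-sum identity $\sum_{z \in \pi^{-1}(w)} a_D(z) = a_{D/U}(w)$ directly; it never forms the block matrices or invokes block inversion. Your approach buys cleanliness — once $A_{D/U}$ is recognized as a Schur complement of $I - A_D$ the matrix identity does the bookkeeping automatically — and your reduction to the single structural claim (that a walk supported on $\mathcal{S}(U)$ from $\mathcal{I}$ to $\mathcal{O}$ has no intermediate state in $\mathcal{I} \cup \mathcal{O}$) is accurate: both sides differ precisely by such walks, and the paper's own proof silently relies on the same fact when it asserts $\tilde{\pi}^{-1}(s_1s_2) = \mathcal{W}_{s_1,s_2}(D)|_{\mathcal{S}(U)}$.

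The genuine gap is your final step. You assert that the structural claim ``is the content of the technical lemma deferred to \Cref{sec:lemma}.'' It is not. Lemma~\ref{lem:deferred}, the result actually deferred to Section~\ref{sec:lemma}, is a cancellation statement about multicycles — it asserts $\sum_{q \in \mathcal{Q}'_\text{bad}(D)} (-1)^{|q|}(a_D)_\text{circ}(q) = 0$ and is used only in the proof of Proposition~\ref{thm:cmc_det} (the determinant-preservation statement), not in Proposition~\ref{thm:cmc}. It says nothing about intermediate states of walks between $\mathcal{I}$ and $\mathcal{O}$. So the load-bearing step of your argument — the step you yourself flag as ``where the real content lies'' and ``the main obstacle'' — has been handed off to a lemma that proves something else entirely, and your proof is therefore incomplete. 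To finish it you would need to supply the walk-structure argument directly (essentially: a transition out of a state in $\mathcal{I} \cup \mathcal{O}$ occurs at a crossing, and if that crossing lies inside $U$ then the state is not actually intermediate in the relevant sense, while if it lies outside $U$ then the next state leaves $\mathcal{S}(U)$), and to be careful about the hypothesis that each strand of $D$ meets $\partial U$ at most once — a point the paper also leaves implicit.
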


	\begin{proof}
		The proof of \cite[Theorem A]{ltw98} tells us the Green's function $g_{D/U}$ is well defined provided $D/U$ satisfies the following condition: when $T = 1$, the value $a_{circ}(q)$ of any cycle $q$ in $D/U$ must be $0$. This condition certainly holds for $D$---since $D$ has no closed components, any cycle contains at least one ``jump'' at a crossing. When $T = 1$, as the previous section discusses, no jumps occur. Given this, it is not difficult to check that the condition holds for $D/U$ as well, and we leave the details of this to the reader.
		
		We now address the second statement of the Proposition. Fix $s,t \in \mathcal{S}(D/U) \subset \mathcal{S}(D)$, and let $\widehat{\mathcal{W}}_{s,t}(D/U)$ denote the set of all finite sequences of states in $\mathcal{S}(D/U)$ with initial state $s$ and terminal state $t$. Then $\mathcal{W}_{s,t}(D/U) \subset \widehat{\mathcal{W}}_{s,t}(D/U)$. Furthermore, for any $\mathcal{W}'$ satisfying $\mathcal{W}_{s,t}(D/U) \subset \mathcal{W}' \subset  \widehat{\mathcal{W}}_{s,t}(D/U)$, we have
		$$
		g_{D/U}(s,t) = \sum_{w \in \mathcal{W}_{s,t}(D/U)} a_{D/U}(w) = \sum_{w \in \mathcal{W}'} a_{D/U}(w)
		$$
		since all sequences $w \in \widehat{\mathcal{W}}_{s,t}(D/U) - \mathcal{W}_{s,t}(D/U)$ have $a_{D/U}(w) = 0$ by definition. We define a map
		$$
		\pi : \mathcal{W}_{s,t}(D) \to \widehat{\mathcal{W}}_{s,t}(D/U)
		$$
		as follows: given $z \in \mathcal{W}_{s,t}(D)$, let $\pi(z) \in \widehat{\mathcal{W}}_{s,t}(D/U)$ be the sequence defined by removing all states of $\mathcal{S}(D) - \mathcal{S}(D/U)$ from $z$.
		
		We claim $\mathcal{W}_{s,t}(D/U) \subset \text{Im}(\pi)$. To this end, let $w = s_1s_2 \cdots s_\ell \in \mathcal{W}_{s,t}(D/U)$. If $w$ contains no adjacent $s_i, s_{i + 1}$ with $s_i \in \mathcal{I}$ and $s_{i + 1} \in \mathcal{O}$, then
		$$
		0 \neq a_{D/U}(w) = \prod_{i = 1}^{\ell - 1} a_{D/U}(s_i, s_{i + 1}) = \prod_{i = 1}^{\ell - 1} a_D(s_i, s_{i + 1}) = a_D(w),
		$$
		so $w \in \mathcal{W}_{s,t}(D)$ and $\pi(w) = w$. If $w$ contains one such pair---say, $s_1 \in \mathcal{I}$ and $s_2 \in \mathcal{O}$---then $a_{D/U}(w) \neq 0$ implies
		$$
		0 \neq a_{D/U}(s_1,s_2) = g_D|_{\mathcal{S}(D_U)}(s_1,s_2).
		$$
		Therefore there exists a walk $z' \in \mathcal{W}_{s_1,s_2}(D)|_{\mathcal{S}(D_U)}$, and we define $z = z's_3s_4 \cdots s_m$. It is straightforward to check that $a_D(z) \neq 0$ and $\pi(z) = w$, so $w \in \text{Im}(\pi)$ in this case as well. For general $w$, we construct a walk $z \in \mathcal{W}_{s,t}(D)$ with $\pi(z) = w$ by replacing each pair $s_i \in \mathcal{I}$, $s_{i + 1} \in \mathcal{O}$ with a walk $z_i' \in \mathcal{W}_{s_i,s_{i + 1}}(D)|_{\mathcal{S}(D_U)}$. This proves the claim.
		
		Next we claim that for all $w \in \text{Im}(\pi)$, if $\pi^{-1}(w)$ denotes the preimage of $w$ in $\mathcal{W}_{s,t}(D)$, then
		$$
		\sum_{z \in \pi^{-1}(w)} a_D(z) = a_{D/U}(w).
		$$
		Let $\tilde{\pi}$ denote the obvious extension of $\pi$ to all walks on $D$: for any walk $z$ on $D$, let $\tilde{\pi}(z)$ be the sequence of states formed by removing all states from $z$ which are not in $\mathcal{S}(D/U)$. Fix $s_1, s_2 \in \mathcal{S}(D/U)$ with $s_1s_2 \in \text{Im}(\tilde{\pi})$, and suppose $s_1 \notin \mathcal{I}$ or $s_2 \notin \mathcal{O}$. In this case there is no walk from $s_1$ to $s_2$ in $D$ with all intermediate states in $\mathcal{S}(D) - \mathcal{S}(D/U)$, so $\tilde{\pi}^{-1}(s_1s_2) = \{s_1s_2\}$. Therefore
		$$
		\sum_{z \in \tilde{\pi}^{-1}(s_1s_2)} a_D(z) = a_D(s_1s_2) = a_{D/U}(s_1s_2).
		$$
		On the other hand, if $s_1 \in \mathcal{I}$ and $s_2 \in \mathcal{O}$, then $\tilde{\pi}^{-1}(s_1s_2) = \mathcal{W}_{s_1,s_2}(D)|_{\mathcal{S}(U)}$. Thus
		$$
		\sum_{z \in \tilde{\pi}^{-1}(s_1s_2)} a_D(z) = \sum_{z \in  \mathcal{W}_{s_1,s_2}(D)|_{\mathcal{S}(U)}} a_D(z) = g_D|_{\mathcal{S}(U)}(s_1,s_2) = a_{D/U}(s_1,s_2)
		$$
		in this case as well. Finally, choose $w = s_1s_2 \cdots s_\ell \in \text{Im}(\pi)$. Then
		$$
		\pi^{-1}(w) = \{z'_1 z'_2 \cdots z'_{\ell - 1}s_\ell  \mid z'_is_{i + 1} \in \tilde{\pi}^{-1}(s_is_{i + 1}) \text{ for $i = 1, \dots, \ell - 1$}\},
		$$
		and from the preceding two calculations we have
		$$
		\sum_{z \in \pi^{-1}(w)} a_D(z) = \prod_{i = 1}^{\ell - 1}\Big( \sum_{z_i \in \tilde{\pi}^{-1}(s_is_{i + 1})} a_D(z'_i) \Big) = \prod_{i = 1}^{\ell - 1} a_{D/U}(s_i,s_{i + 1}) = a_{D/U}(w).
		$$
		This proves our second claim, and we conclude that
		$$
		g_{D/U}(s,t) = \sum_{w \in \text{Im}(\pi)} a_{D/U}(w) = \sum_{w \in \text{Im}(\pi)} \sum_{z \in \pi^{-1}(w)} a_D(z) = \sum_{z \in \mathcal{W}_{s,t}(D)} a_D(z) = g_D(s,t).
		$$
	\end{proof}

	Let $A_D$, $G_D$, $A_{D/U}$ and $G_{D/U}$ denote transition and Green's matrices for $D$ and $D/U$ respectively. Then the preceeding proposition and Lemma \ref{lem:path_conv} give us:
	
	\begin{cor}
		\label{cor:g_matrix}
		For any contraction as above, the Green's matrix
		$$
		G_{D/U} = (I - A_{D/U})^{-1} \in M_{|\mathcal{S}(D/U)|}(\Z(T))
		$$
		is well defined, and its entries coincide with the Green's function $g_{D/U}$ as in the case of tangle Markov chains.
	\end{cor}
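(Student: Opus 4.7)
The plan is to derive this corollary as a direct combination of Proposition \ref{thm:cmc} with Lemma \ref{lem:path_conv}, with the only care needed in passing between pointwise evaluations at specific $T \in \C$ and identities in $\Z(T)$.

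First, I would pick any $T_0 \in \C$ in the open neighborhood of $1$ supplied by Proposition \ref{thm:cmc}, so that $g_{D/U}(s,t)$ converges absolutely at $T_0$ for every pair $s,t \in \mathcal{S}(D/U)$. Viewing $D/U$ as an abstract Markov chain whose transition function $a_{D/U}$ is evaluated at $T_0$, the hypotheses of Lemma \ref{lem:path_conv} are met. The lemma then yields that $I - A_{D/U}$ is invertible at $T=T_0$ and that the $(i,j)$-entry of its inverse equals $g_{D/U}(s_i,s_j)|_{T=T_0}$.

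Second, I would upgrade invertibility at a single point to invertibility over the field $\Z(T)$. Since $\det(I - A_{D/U}) \in \Z(T)$ is a rational function that does not vanish at $T_0$, it is not identically zero, and so the matrix $G_{D/U} = (I - A_{D/U})^{-1} \in M_{|\mathcal{S}(D/U)|}(\Z(T))$ is well-defined. The entries of this inverse are rational functions of $T$; by the previous paragraph they agree with the rational functions $g_{D/U}(s_i,s_j)$ produced by Proposition \ref{thm:cmc} at every $T$ in an open neighborhood of $1$. Two rational functions over $\C$ that coincide on an open set are equal, so the matrix entries of $G_{D/U}$ coincide with $g_{D/U}$ as elements of $\Z(T)$, establishing the final sentence of the corollary.

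There is no real obstacle here; the only point requiring mild care is confirming that Lemma \ref{lem:path_conv} applies to the contracted chain $D/U$ even though its transition function is not of the ``tangle diagram'' form introduced in Section \ref{sec:ud}. But the lemma is stated in full generality for any Markov chain whose Green's function converges at the chosen $T$, so once Proposition \ref{thm:cmc} is invoked, the remainder is formal.
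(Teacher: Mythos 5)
Your proof is correct and takes essentially the same approach as the paper: the paper presents Corollary~\ref{cor:g_matrix} as an immediate consequence of Proposition~\ref{thm:cmc} together with Lemma~\ref{lem:path_conv}, and you have simply spelled out the pointwise-to-$\Z(T)$ bookkeeping (picking a good $T_0$, deducing nonvanishing of the determinant, then matching rational functions that agree on an open set) that the paper leaves implicit.
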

	
	The next proposition gives a sufficient condition that ensures contracting preserves the determinant $\det(I - A_D)$. Cycles are defined as in Section \ref{sec:hikes}, and we abuse notation by letting $I$ denote identity matrices of different dimensions simultaneously.
	
	\begin{prop}
		\label{thm:cmc_det}
		If the contracted region $U$ admits no cycles, then
		$$
		\det(I - A_D) = \det(I - A_{D/U}).
		$$
		Equivalently, $\det(G_D) = \det(G_{D/U})$.
	\end{prop}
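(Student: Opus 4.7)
The plan is to establish $\det(I - A_D) = \det(I - A_{D/U})$ by a block Schur-complement computation. I would partition $\mathcal{S}(D) = V \sqcup \mathcal{I} \sqcup \mathcal{O} \sqcup W$, where $V = \mathcal{S}(U) \setminus (\mathcal{I} \cup \mathcal{O})$ is the interior of $U$ and $W = \mathcal{S}(D-U) \setminus (\mathcal{I} \cup \mathcal{O})$ is the exterior. With states ordered this way, $I - A_D$ has a sparse block form: the $V$-row has nonzero entries only in columns $V$ and $\mathcal{O}$ (a strand in the interior of $U$ can only continue inside $U$ or exit through $\mathcal{O}$), and the $V$-column has nonzero entries only in rows $V$ and $\mathcal{I}$.

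The first observation is that cycles in the Markov chain $U$ cannot involve any $\mathcal{I}$-state (which has no $U$-incoming transitions) or any $\mathcal{O}$-state (no $U$-outgoing transitions), so the cycles of $U$ coincide with the cycles of the sub-chain on $V$. The no-cycles hypothesis, together with the Cartier-Foata identity (\ref{eq:cf}), therefore gives $\det(I - A_{VV}) = 1$, and in particular $I - A_{VV}$ is invertible. Applying the block Schur-complement formula to the $V$-block then yields
$$
\det(I - A_D) = \det(I - A_{VV}) \cdot \det(S) = \det(S),
$$
where $S$ is a matrix on $\mathcal{I} \cup \mathcal{O} \cup W$ agreeing with the corresponding block of $I - A_D$ everywhere except in its $(\mathcal{I}, \mathcal{O})$ entry, which becomes $-A_{\mathcal{I}\mathcal{O}} - A_{\mathcal{I}V}(I - A_{VV})^{-1}A_{V\mathcal{O}}$.

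The main step -- and the main obstacle -- is to identify $S$ with $I - A_{D/U}$. Expanding $(I - A_{VV})^{-1} = \sum_{k \geq 0} A_{VV}^k$, the quantity $A_{\mathcal{I}\mathcal{O}} + A_{\mathcal{I}V}(I - A_{VV})^{-1}A_{V\mathcal{O}}$ enumerates with correct weights all walks in $D$ from $s \in \mathcal{I}$ to $t \in \mathcal{O}$ whose intermediate states all lie in $V$; the no-cycles hypothesis is precisely what rules out ``multi-pass'' walks in $\mathcal{W}_{s,t}(D)|_{\mathcal{S}(U)}$ that exit through an $\mathcal{O}$-state and re-enter through an $\mathcal{I}$-state, since any such walk would contain a cycle in $U$. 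Granted this identification, $S = I - A_{D/U}$ and the claimed equality follows; the equivalent $\det(G_D) = \det(G_{D/U})$ is then immediate from $G = (I - A)^{-1}$.
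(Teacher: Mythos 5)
Your Schur-complement argument takes a genuinely different route from the paper, which instead expands both determinants via the Cartier--Foata identity (\ref{eq:cf}) as signed sums over simple multicycles and establishes their equality through a combinatorial cancellation (Lemma \ref{lem:deferred}, whose proof occupies all of Section \ref{sec:lemma}). Your block decomposition, the observation that cycles of the subchain $U$ all lie in the interior $V$ so that $\det(I - A_{VV}) = 1$ by (\ref{eq:cf}), and the resulting factorization $\det(I - A_D) = \det(S)$ are all correct, and the approach has the real advantage of bypassing Lemma \ref{lem:deferred} entirely.

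The one step that needs more care is the identification $S = I - A_{D/U}$. Your justification --- that a ``multi-pass'' walk in $\mathcal{W}_{s,t}(D)|_{\mathcal{S}(U)}$ would create a cycle in $U$ and hence is excluded by the hypothesis --- is not quite right: the offending transition from an $\mathcal{O}$-state back to an $\mathcal{I}$-state occurs at a crossing \emph{outside} $U$, so the resulting loop is a cycle of $D$ restricted to $\mathcal{S}(U)$ but is not a cycle of the Markov chain induced by the subtangle $U$, and the no-cycles hypothesis says nothing about it. (Indeed, for the family of $(2,q)$-torus diagrams such an $\mathcal{O}\to\mathcal{I}$ transition is present.) The clean fix sidesteps walk enumeration entirely: by Proposition \ref{thm:cmc} and Lemma \ref{lem:path_conv}, the submatrix of $G_D = (I - A_D)^{-1}$ indexed by $\mathcal{S}(D/U)$ equals $G_{D/U} = (I - A_{D/U})^{-1}$, while the standard block-inversion formula identifies that same submatrix with $S^{-1}$; hence $S = I - A_{D/U}$. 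With this substitution your proof is complete and considerably shorter than the paper's.
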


	An example of a tangle diagram which admits no cycles is a braid, since particles always move in the direction of orientation of the braid.

	\begin{proof}
		By the identity (\ref{eq:cf}), using the notation from Section \ref{sec:hikes}, it suffices to check that
		$$
		\sum_{q \in \mathcal{Q}(D/U)} (-1)^{|q|}(a_{D/U})_\text{circ}(q) = \sum_{q \in \mathcal{Q}(D)} (-1)^{|q|}(a_D)_\text{circ}(q).
		$$
		Let $\mathcal{Q}'(D)$ denote the set of all multicycles of $D$ which are {\em simple outside of $\mathcal{S}(D) - \mathcal{S}(D/U)$}. In other words, $\mathcal{Q}'(D)$ is the set of all finite sets of cycles $q' = \{c_1, \dots, c_k\}$ such that if a state $s$ appears multiple times in any one $c_i$, or appears in distinct $c_i$ and $c_j$, then $s \in \mathcal{S}(D) - \mathcal{S}(D/U)$. Note that since $U$ does not admit cycles, $\mathcal{Q}'(D)$ is finite.
		
		First, we claim
		\begin{equation}
			\label{eq:clone}
			\sum_{q \in \mathcal{Q}(D/U)} (-1)^{|q|}(a_{D/U})_\text{circ}(q) = \sum_{z \in \mathcal{Q}'(D)} (-1)^{|z|}(a_D)_\text{circ}(z).
		\end{equation}
		The proof of (\ref{eq:clone}) is analogous to the proof of Proposition \ref{thm:cmc}. Let $\widehat{\mathcal{Q}}(D/U)$ be the set of all finite sets $q = \{c_1, \dots, c_k\}$ such that:
		\begin{itemize}
			\item Each $c_i$ is a finite, non-empty sequence of states in $\mathcal{S}(D/U)$ considered up to cyclic permutation. That is, two such sequences are the considered the same if they are cyclic permutations of one another.
			\item No state appears in multiple $c_i$ or more than once in the same $c_i$.
		\end{itemize}
		Then $\widehat{\mathcal{Q}}(D/U)$ is finite and there is an obvious inclusion
		$$
		\mathcal{Q}(D/U) \hookrightarrow \widehat{\mathcal{Q}}(D/U).
		$$
		Furthermore, for any $\mathcal{Q}''$ with $\mathcal{Q}(D/U) \subset \mathcal{Q}'' \subset \widehat{\mathcal{Q}}(D/U)$, we have
		$$
		\sum_{q \in \mathcal{Q}(D/U)} (-1)^{|q|}(a_{D/U})_\text{circ}(q) = \sum_{q \in \mathcal{Q}''} (-1)^{|q|}(a_{D/U})_\text{circ}(q)
		$$
		since any element $ q \in \widehat{\mathcal{Q}}(D/U) - \mathcal{Q}(D/U)$ has $a_\text{circ}(q) = 0$. As in the proof of Proposition \ref{thm:cmc}, we define a map
		$$
		\pi : \mathcal{Q}'(D) \to \widehat{\mathcal{Q}}(D/U)
		$$
		as follows: given $z \in \mathcal{Q}'(D)$, we construct $\pi(z) \in \widehat{\mathcal{Q}}(D/U)$ by removing all states of $\mathcal{S}(D) - \mathcal{S}(D/U)$ from each cycle in $z$.
		
		Since $U$ does not admit cycles, no cycle of $D$ has empty image under $\pi$. Thus $\pi$ is well defined, and for all $z \in \mathcal{Q}'(D)$ we have $|z| = |\pi(z)|$. Additionally, the same arguments used in the proof of Proposition \ref{thm:cmc} show that
		$$
		\mathcal{Q}(D/U) \subset \text{Im}(\pi)
		$$
		and that for all $q \in \text{Im}(\pi)$,
		$$
		\sum_{z \in \pi^{-1}(q)}a_\text{circ}(z) = a_\text{circ}(q).
		$$
		Thus, as in the proof of Proposition \ref{thm:cmc}, we have
		\begin{align*}
		\sum_{q \in \mathcal{Q}(D/U)} (-1)^{|q|}(a_{D/U})_\text{circ}(q) &= \sum_{q \in \text{Im}(\pi)} (-1)^{|q|}(a_{D/U})_\text{circ}(q) \\
		&= \sum_{q \in \text{Im}(\pi)} (-1)^{|q|}\Big(\sum_{z \in \pi^{-1}(q)}a_\text{circ}(z)\Big) \\
		&= \sum_{q \in \text{Im}(\pi)} \Big(\sum_{z \in \pi^{-1}(q)}(-1)^{|z|}a_\text{circ}(z)\Big) =  \sum_{z \in \mathcal{Q}'(D)} (-1)^{|z|}(a_D)_\text{circ}(z),
		\end{align*}
		proving the claim.
		
		Next we claim
		\begin{equation}
			\label{eq:cltwo}
			\sum_{q \in \mathcal{Q}'(D)} (-1)^{|q|}(a_D)_\text{circ}(q) = \sum_{q \in \mathcal{Q}(D)} (-1)^{|q|}(a_D)_\text{circ}(q),
		\end{equation}
		which combined with (\ref{eq:clone}) proves the proposition. Let $\mathcal{Q}_\text{bad}'(D)$ be the set of all elements of $\mathcal{Q}'(D)$ which contain a repeated state. Equivalently,
		$$
		\mathcal{Q}_\text{bad}'(D) = \mathcal{Q}'(D) - \mathcal{Q}(D).
		$$
		Then equation (\ref{eq:cltwo}) is equivalent to the following lemma:
		\begin{lemma} 
			\label{lem:deferred}
			$$
				\sum_{q \in \mathcal{Q}_\text{bad}'(D)} (-1)^{|q|}(a_D)_\text{circ}(q) = 0.
			$$
		\end{lemma}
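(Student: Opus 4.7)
The plan is to exhibit a sign-reversing, weight-preserving involution $\iota$ on $\mathcal{Q}_\text{bad}'(D)$ based on the smallest repeated state. Fix a total order on $\mathcal{S}(D)$. For each $q \in \mathcal{Q}_\text{bad}'(D)$ let $s = s(q)$ denote the smallest state repeated in $q$; by the definition of $\mathcal{Q}'(D)$, we have $s \in \mathcal{S}(D) - \mathcal{S}(D/U)$. The $k \geq 2$ occurrences of $s$ in $q$ decompose the cycles through $s$ into $k$ \emph{$s$-to-$s$ segments}: maximal subwalks of $q$ that start at an occurrence of $s$, end at the next occurrence of $s$, and do not pass through $s$ in between. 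As a multiset, these segments depend only on the multiset of edges of $q$, not on how they are spliced together at $s$.

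The key observation is that in any $q \in \mathcal{Q}_\text{bad}'(D)$, the segments at $s$ are pairwise distinct as walks. Indeed, any two equal segments would share all of their intermediate states, and simplicity outside $\mathcal{S}(D) - \mathcal{S}(D/U)$ forces each of these intermediate states to lie in $\mathcal{S}(D) - \mathcal{S}(D/U)$; together with the endpoint $s$, which also lies in $\mathcal{S}(D) - \mathcal{S}(D/U)$, the common segment would then form a closed walk supported entirely on $\mathcal{S}(U)$, i.e.~a cycle in $U$, contradicting the hypothesis of \Cref{thm:cmc_det}.

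Next I define $\iota$. Order the segments at $s$ lexicographically and let $\omega_1, \omega_2$ be the occurrences of $s$ whose outgoing segments are the first and second in this order; by the key observation this pair is unambiguous. Let $\iota(q)$ be the multicycle obtained by exchanging the successor arcs of $\omega_1$ and $\omega_2$ at $s$, equivalently, by composing the matching permutation at $s$ with a single transposition. I then verify: (a) the multiset of edges and hence of segments is preserved, so the lexicographic ordering is preserved, $\iota^2 = \mathrm{id}$, and $(a_D)_\text{circ}(\iota(q)) = (a_D)_\text{circ}(q)$; (b) composing with a transposition changes the number of orbits of the matching permutation by one, so $|\iota(q)| = |q| \pm 1$; (c) every state other than $s$ retains its multiplicity, so $\iota(q) \in \mathcal{Q}_\text{bad}'(D)$; (d) a split that produced two identical cycles would force a cycle of $q$ of the form $\sigma \cdot \sigma$ and thus two equal segments, contradicting the key observation, so $\iota(q)$ is always a genuine set of distinct cycles.

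Pairing $q$ with $\iota(q)$ makes the sum in the lemma collapse to zero. The main obstacle is precisely the key observation: without the distinctness of segments, the canonical lexicographic ordering at $s$ could tie and the involution could attempt to split a cycle into two identical copies, leaving the image outside $\mathcal{Q}_\text{bad}'(D)$. Ruling both pathologies out is where the no-cycles hypothesis on $U$ built into \Cref{thm:cmc_det} is essential.
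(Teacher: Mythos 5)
Your proof is correct, and it takes a genuinely different route from the paper's. The paper partitions $\mathcal{Q}'_\text{bad}(D)$ by the map $f$ sending a multicycle to its formal edge sum in $\Z^{(E)}$: it shows $a_\text{circ}$ is constant on each fiber $f^{-1}(w)$, chooses a \emph{sink} vertex $v$ of the (common, acyclic) bad set so that the $n \geq 2$ outgoing edges of $v$ are pairwise distinct, and then counts---a free $S_n$-action on the fiber together with the index-two inclusion $A_n < S_n$ forces exactly half the fiber to have even $|q|$. You instead build a single sign-reversing, weight-preserving involution $\iota$ by transposing the two lexicographically smallest outgoing segments at the smallest repeated state $s$. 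The trade-offs: your argument is more elementary and constructive (no appeal to $S_n/A_n$), at the price of carefully checking that $\iota(q)$ is a genuine \emph{set} of distinct cycles, a worry the paper's counting argument never encounters. You also choose $s$ canonically rather than as a sink; this works because you invoke the acyclicity of $U$ at the level of whole $s$-to-$s$ segments (two coinciding segments would supply a closed walk on strands interior to $U$, hence a cycle in $U$), whereas the paper only needs it for single outgoing edges at a sink. One minor imprecision worth tightening: the segment multiset at $s$ is not determined by the edge multiset alone, as you assert, but by the edge multiset together with the splicings at repeated vertices \emph{other than} $s$; since $\iota$ alters only the splicing at $s$, the segments and their lexicographic order are indeed preserved and $\iota^2 = \mathrm{id}$, which is all your argument actually requires.
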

		Our proof of Lemma \ref{lem:deferred} is somewhat lengthy and has a different flavor then the rest of the paper, so we defer it to Section \ref{sec:lemma} below. Assuming the lemma, the proposition is proven.
	\end{proof}
	
	\section{Random walks on powers of full twists}
	\label{sec:burau}
	
	\subsection{Burau representation of powers of full twists}
	
	To study the behavior of $\rho_1$ under twisting, it will be important to understand random walks on powers of full twists. For this we take advantage of the fact that a full twist on $n$ coherently oriented strands is the braid $\Omega_n = (\sigma_1\sigma_2 \cdots \sigma_{n - 1})^n$ in the $n$-strand braid group $B_n$, where the $\sigma_i$ are the standard generators. For information on braid groups, see \cite{katu08}.
	
	\begin{conv}
		Throughout the paper we use $\Omega_n$ to indicate the full twist element of the $n$-strand braid group $B_n$. Additionally, we adopt the convention that braids are oriented vertically from bottom to top.
	\end{conv}
	
	Recall that the (unreduced) Burau representation $\psi : B_n \to \text{GL}_n(\Z[T,T^{-1}])$ is determined by 
	\begin{equation}
		\label{eq:sigma_rep}
		\psi(\sigma_k) = \begin{bmatrix}
			I_{k - 1} & 0 & 0 & 0 \\
			0 & 1 - T & T & 0 \\
			0 & 1 & 0 & 0 \\
			0 & 0 & 0 & I_{n - k - 1}
		\end{bmatrix} 
	\end{equation}
	for $k = 1, \dots, n - 1$. The connection with our random walk model is the following observation of Jones \cite{jon87}: if $s_i$ denotes the $i$th incoming strand of a braid $\beta \in B_n$ and $t_j$ denotes the $j$th outgoing strand, labeling from left to right in each case, then
	\begin{equation}
		\label{eq:braid_prob}
		\psi(\beta)_{ij} = g(s_i, t_j),
	\end{equation}
	where $g$ is the Green's function on the Markov chain induced by any diagram of $\beta$. Thus, we can understand random walks on $k$ full twists $\Omega_n^k$ by computing $\psi(\Omega_n^k)$. We begin by calculating $\psi(\Omega_n)$, assuming throughout the section that $n \geq 2$.
	
	\begin{prop}
		\label{thm:twist_rep}
		The Burau representation $\psi(\Omega_n)$ of a full twist on $n$ strands is defined by
		$$
			\psi(\Omega_n)_{ij} = \begin{cases}
				T^{j - 1}(1 - T) & i \neq j \\
				1 - \sum_{m = 1, m \neq i}^n \psi(\Omega_n)_{im} & i = j
			\end{cases} = \begin{cases}
			T^{j - 1}(1 - T) & i \neq j \\
			1 - (1 - T)(\sum_{m = 1, m \neq i}^n T^{m - 1})  & i = j
		\end{cases}.
		$$
	\end{prop}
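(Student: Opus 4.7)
My plan is to compute $\psi(\Omega_n)_{ij}$ using the random-walk interpretation (\ref{eq:braid_prob}), exploiting the combinatorial structure of $\Omega_n = c^n$ where $c = \sigma_1 \sigma_2 \cdots \sigma_{n-1}$.

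The crucial observation is the following. Label the strands by their positions at the bottom of the braid and view $\Omega_n$ as $n$ consecutive copies of $c$, which I call \emph{blocks} $1, 2, \ldots, n$. A single application of $c$ acts on strand positions by the cyclic permutation $p$ with $p(1) = n$ and $p(k) = k - 1$ for $k \geq 2$, so at the start of block $m$ strand $i$ sits at position $i - m + 1 \pmod{n}$ (valued in $\{1, \ldots, n\}$). In particular, strand $i$ begins block $i$ at position $1$ and moves monotonically rightward through that block, crossing \emph{over} strands $i+1, i+2, \ldots, n, 1, 2, \ldots, i-1$ at the successive crossings $\sigma_1, \sigma_2, \ldots, \sigma_{n-1}$ respectively. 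Since strand $i$ is the over-strand in \emph{no other block}, a particle in the random walk on $\Omega_n$ can change strands only during block $i$ while it is still on strand $i$.

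Analyzing the possibilities in block $i$: with weight $T^{n-1}$ the particle makes no jump, and with weight $T^{m-1}(1-T)$ it first jumps at the $m$-th over-crossing onto strand $i+m \pmod{n}$. If the landing strand $j$ satisfies $j < i$, then strand $j$'s only block of over-crossings has already passed, so the particle is stuck and exits at position $j$. If $j > i$, the particle reaches block $j$ on strand $j$ and behaves from there exactly like a particle that entered on strand $j$. Writing $Q(i,k) := \psi(\Omega_n)_{ik}$, this yields, for $i < n$, the recursion
\begin{equation*}
Q(i,k) = T^{n-1}[k=i] + (1-T)\sum_{m=1}^{n-i}T^{m-1}Q(i+m,k) + (1-T)\sum_{j=1}^{i-1}T^{n-i+j-1}[k=j],
\end{equation*}
with base case $Q(n,k) = T^{k-1}(1-T)$ for $k < n$ and $Q(n,n) = T^{n-1}$, both of which match the claimed formula.

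I would conclude by downward induction on $i$ from $i = n$. Assuming the formula for $Q(j,k)$ with $j > i$, a direct substitution into the recursion combined with the geometric identity $\sum_{m=1}^{n-i}T^{m-1} = (1 - T^{n-i})/(1-T)$ causes the extra terms to telescope, leaving $Q(i,k) = T^{k-1}(1-T)$ for $k \neq i$; the diagonal entry follows either from a parallel calculation or for free from the row-sum identity $\sum_k Q(i,k) = 1$ inherited from the individual Burau matrices $\psi(\sigma_j)$. The main obstacle is the combinatorial claim identifying the unique block in which each strand plays the over-strand role; once that is verified by tracing $p$, the remainder is routine bookkeeping.
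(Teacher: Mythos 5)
Your proof is correct, and it takes a genuinely different route from the paper's. The paper's argument is algebraic: it first proves a general fact (Lemma~\ref{thm:central_rep}) that any matrix commuting with every $\psi(\sigma_k)$ has off-diagonal entries of the form $a_{ij}=T^{j-1}p_A$ for a single polynomial $p_A$, then invokes the centrality of $\Omega_n$ in $B_n$ to apply this to $\psi(\Omega_n)$, computes the one free parameter $p_{\psi(\Omega_n)}=\psi(\Omega_n)_{n1}=1-T$ by observing there is a unique random walk from the $n$th incoming strand to the first outgoing one, and recovers the diagonal from the row-sum identity. Your argument is combinatorial and self-contained: you decompose $\Omega_n = c^n$ into $n$ blocks, correctly identify that strand $i$ is the over-strand only during block $i$ (and there crosses over strands $i+1,\dots,n,1,\dots,i-1$ in order), derive the recursion for $Q(i,k)$ by tracking where and when a jump can occur, and solve it by downward induction; the diagonal again comes from the row-sum identity. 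The key combinatorial claims --- the unique over-block for each strand, and that a particle landing on strand $j>i$ subsequently behaves as if it had entered on strand $j$, because strand $j$ encounters no over-crossings until block $j$ --- both check out, and the induction does telescope as you assert. One tradeoff worth noting: the paper's Lemma~\ref{thm:central_rep} is reused immediately afterward to describe $\psi(\Omega_n^k)$ for all $k$ with a single family of polynomials $p_{n,k}$, so the centrality-based route does double duty; your direct computation establishes the $k=1$ case cleanly but would need a separate recursion or the centrality lemma to extend to higher powers.
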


	The $i = j$ case follows from the $i \neq j$ case and the observation that, for any braid $\beta \in B_n$, all entries in a given row of $\psi(\beta)$ sum to $1$. This is implied by the random walk interpretation and easily checked using the generating matrices in (\ref{eq:sigma_rep}). When $i \neq j$ we use the fact that $\Omega_n$ is central in $B_n$ (in fact $\Omega_n$ generates the center of $B_n$), so $\psi(\Omega_n)$ commutes with $\psi(\beta)$ for all $\beta \in B_n$.
	
	\begin{lemma}
		\label{thm:central_rep}
		Suppose $A = (a_{ij}) \in \text{GL}_n(\Z[T,T^{-1}])$ commutes with $\psi(\beta)$ for all $\beta \in B_n$. Then there exists a polynomial $p_A \in \Z[T,T^{-1}]$ such that for all $i \neq j$,
		$$
		a_{ij} = T^{j - 1}p_A.
		$$
	\end{lemma}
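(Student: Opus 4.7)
The plan is to extract a short list of linear relations among the entries $a_{ij}$ from the hypothesis $A\psi(\sigma_k) = \psi(\sigma_k)A$ for $k = 1, \dots, n-1$, and then to prove the desired identity by induction on $j$. Since $\psi(\sigma_k)$ agrees with the identity outside the $2 \times 2$ block in rows and columns $k$ and $k+1$, comparing the $(i,j)$ entries of the two products $A\psi(\sigma_k)$ and $\psi(\sigma_k)A$ yields, after discarding trivial identities, three substantive relations:
\begin{enumerate}[label=(\roman*)]
\item $a_{i, k+1} = T \, a_{i, k}$ for every $i \notin \{k, k+1\}$;
\item $a_{k, j} = a_{k+1, j}$ for every $j \notin \{k, k+1\}$;
\item $a_{k, k+1} = T \, a_{k+1, k}$.
\end{enumerate}
Every other entry comparison either reduces to one of these or constrains only the diagonal of $A$, which is irrelevant for the statement.

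Set $p_A := a_{2,1} \in \Z[T, T^{-1}]$. I would prove by induction on $j \in \{1, \dots, n\}$ that $a_{ij} = T^{j-1} p_A$ for all $i \neq j$. For the base case $j = 1$, iterating relation (ii) with $k = 2, 3, \dots, n-1$ yields $a_{2,1} = a_{3,1} = \cdots = a_{n,1} = p_A$. For the inductive step, assume the claim holds for column $j-1$. Relation (i) with $k = j-1$ gives $a_{i, j} = T \, a_{i, j-1} = T^{j-1} p_A$ for every $i \notin \{j-1, j\}$, handling every off-diagonal entry of column $j$ except $a_{j-1, j}$. To capture this last entry, I would apply (iii) with $k = j-1$: $a_{j-1, j} = T \, a_{j, j-1}$, and since $a_{j, j-1}$ is an off-diagonal entry of column $j-1$, the inductive hypothesis gives $a_{j, j-1} = T^{j-2} p_A$. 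Hence $a_{j-1, j} = T^{j-1} p_A$, completing the induction.

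The one place this argument needs care is the final invocation of (iii): relations (i) and (ii) alone only equate entries within the strictly-above-diagonal and strictly-below-diagonal portions of each column, and (iii) is what glues these two halves together so that a single polynomial $p_A$ controls all off-diagonal entries. This is also what makes the boundary case $n = 2$ work: there (i) and (ii) are vacuous, and (iii) alone supplies $a_{1,2} = T a_{2,1} = T p_A$.
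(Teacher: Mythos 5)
Your proposal is correct and follows the same approach as the paper: you extract exactly the same three relations (adjacent off-diagonal column entries equal, adjacent off-diagonal row entries scale by $T$, and the near-diagonal pair $a_{k,k+1} = T a_{k+1,k}$) from the commutation $A\psi(\sigma_k) = \psi(\sigma_k)A$, and deduce that one off-diagonal entry determines them all. The paper asserts the final step without writing out the induction on $j$; your explicit induction (and your note that relation (iii) is the bridge across the diagonal, handling $n=2$) fills in that detail cleanly, and the cosmetic difference of setting $p_A = a_{2,1}$ rather than $a_{n,1}$ is immaterial since relation (ii) equates them.
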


	\begin{proof}
		Let $r_i$ denote the $i$th row of $A$ and $c_j$ the $j$th column. Using (\ref{eq:sigma_rep}), we compute
		$$
		\psi(\sigma_k)A = \begin{bmatrix}
			r_1 \\
			\vdots \\
			r_{k - 1} \\
			(1 - T)r_k + Tr_{k + 1} \\
			r_k \\
			r_{k + 2}\\
			\vdots \\
			r_n
		\end{bmatrix}
		$$
		and
		$$
		A \psi(\sigma_i) = [c_1, \cdots, c_{k - 1}, (1 - T)c_k + c_{k + 1}, Tc_k, c_{k + 2}, \cdots, c_n]
		$$ 
		for all $k = 1, \dots, n - 1$. Let $M_k = \psi(\sigma_k)A = A \psi(\sigma_k)$; then we obtain three sets of equations:
		\begin{align*}
			a_{k,j} &= (M_k)_{k+1,j} = a_{k + 1,j} &\text{for } j \neq k, k + 1 \\
			a_{i,k+1} &= (M_k)_{i,k+1} = Ta_{i,k} &\text{for } i \neq k, k + 1 \\
			Ta_{k+1,k} &= (M_k)_{k,k} - (1 - T)a_{k,k} = a_{k,k+1} \\
		\end{align*}
		The first set of equations says that two adjacent, off-diagonal elements of a given column of $A$ are equal, and the second says adjacent, off-diagonal row elements are related by multiplying by $T$. The third set of equations says the two elements $a_{k + 1,k}$ and $a_{k,k + 1}$, adjacent to the main diagonal of $A$ on opposite sides, are also related by multiplying by $T$. It follows that all off-diagonal entries of $A$ are determined by the polynomial $p_A = a_{n1}$ in the desired way.
	\end{proof}

	\begin{proof}[Proof of Proposition \protect\ref{thm:twist_rep}]
		By Lemma \ref{thm:central_rep} and the preceding discussion, the entries of $\psi(\Omega_n)$ are completely determined by the polynomial
		$$
		p_{\psi(\Omega_n)} = \psi(\Omega_n)_{n1}.
		$$
		To compute $\psi(\Omega_n)_{n1}$, we use the probabilistic interpretation of $\psi(\Omega_n)$: we claim a particle which enters $\Omega_n$ on the $n$th incoming strand has a unique random walk by which it can exit $\Omega_n$ on the first strand, and this walk has probability $1 - T$. This claim can be checked by drawing a picture, which we leave to the reader---see Figure \ref{fig:full_twist} for the case $n = 3$. It then follows from (\ref{eq:braid_prob}) that $\psi(\Omega_n)_{n1} = 1 - T$, as the proposition claims.
	\end{proof}

	Having computed $\psi(\Omega_n)$, it is straightforward to determine $\psi(\Omega_n^k)$ for all $k \in \N$. By Lemma \ref{thm:central_rep} and the preceeding discussion, since $\Omega^k_n$ is central, there exists a family of polynomials $p_{n,k} \in \Z[T,T^{-1}]$ such that $p_{n,1} = 1 - T$ for all $n$, and
	\begin{equation}
		\label{eq:pre_twist_pow_rep}
		\psi(\Omega^k_n)_{ij} = \begin{cases}
			T^{j - 1}p_{n,k} & i \neq j \\
			1 - p_{n,k}(\sum_{m = 1, m \neq i}^n T^{m - 1}) & i = j
		\end{cases}
	\end{equation}
	for all $n$ and $k$.
	
	\begin{lemma}
		\label{thm:twist_pow_q}
		With notation as above,
		$$
		p_{n,k} = (1 - T)\sum_{i = 0}^{k - 1} T^{ni}.
		$$
	\end{lemma}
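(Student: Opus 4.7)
The plan is to establish the linear recursion $p_{n,k+1} = (1-T) + T^n p_{n,k}$ by induction on $k$, then solve it explicitly. The base case $p_{n,1} = 1-T$ is \Cref{thm:twist_rep}, and for the inductive step I would exploit the multiplicativity $\psi(\Omega_n^{k+1}) = \psi(\Omega_n)\psi(\Omega_n^k)$. By equation (\ref{eq:pre_twist_pow_rep}), the $(n,1)$-entry of $\psi(\Omega_n^{k+1})$ is exactly $p_{n,k+1}$, so it suffices to compute this single entry of the matrix product.

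To carry out the computation, I would first simplify the two ``special'' diagonal entries that enter the sum. The collapse $1 - (1-T)(1 + T + \cdots + T^{n-2}) = T^{n-1}$ gives $\psi(\Omega_n)_{n,n} = T^{n-1}$, and analogously $\psi(\Omega_n^k)_{1,1} = 1 - p_{n,k}(T + T^2 + \cdots + T^{n-1})$. Splitting $\sum_{m=1}^n \psi(\Omega_n)_{n,m}\psi(\Omega_n^k)_{m,1}$ into the $m = 1$ term, the middle $m = 2, \ldots, n-1$ terms, and the $m = n$ term, the $p_{n,k}$-dependent contributions combine via
$$-(1-T)(T+\cdots+T^{n-1}) + (1-T)(T+\cdots+T^{n-2}) + T^{n-1} \;=\; -(1-T)T^{n-1} + T^{n-1} \;=\; T^n,$$
while the constant part contributes $(1-T)$. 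This yields the claimed recursion, and iterating from $p_{n,1} = 1-T$ gives $p_{n,k} = (1-T)\sum_{i=0}^{k-1} T^{ni}$.

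The main obstacle is purely bookkeeping: one must carefully separate the two special diagonal entries (in row $n$ of $\psi(\Omega_n)$ and column $1$ of $\psi(\Omega_n^k)$) from the generic off-diagonal entries, which have a different form. Once the geometric-series identity above is applied, everything reduces to transparent algebra. A more elegant alternative route would be to observe the rank-one decomposition $\psi(\Omega_n) = T^n I + \mathbf{1}\mathbf{v}^T$, where $\mathbf{v}^T = (1-T)(1, T, \ldots, T^{n-1})$, then apply the binomial theorem (valid because $T^n I$ is scalar and thus commutes with $\mathbf{1}\mathbf{v}^T$) and use $(\mathbf{1}\mathbf{v}^T)^j = (1-T^n)^{j-1}\mathbf{1}\mathbf{v}^T$ together with $\mathbf{v}^T\mathbf{1} = 1 - T^n$ to collapse the resulting series in closed form; this produces the same expression for $p_{n,k}$ without induction.
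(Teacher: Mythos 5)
Your main argument is correct and essentially identical to the paper's: both establish the recursion $p_{n,k+1} = (1-T) + T^n p_{n,k}$ by computing the $(n,1)$-entry of the product $\psi(\Omega_n^{k+1})$ (the paper multiplies as $\psi(\Omega_n^k)\psi(\Omega_n)$, you as $\psi(\Omega_n)\psi(\Omega_n^k)$, which makes no difference since $\Omega_n$ is central) and splitting off the two special diagonal entries before summing a geometric series.

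Your alternative route via the rank-one decomposition $\psi(\Omega_n) = T^n I + \mathbf{1}\mathbf{v}^T$ with $\mathbf{v}^T = (1-T)(1, T, \ldots, T^{n-1})$ is a genuinely different and arguably cleaner argument, not in the paper. Since $T^n I$ is scalar, the binomial theorem gives $\psi(\Omega_n)^k = T^{nk}I + \frac{1 - T^{nk}}{1-T^n}\mathbf{1}\mathbf{v}^T$ directly, using $(\mathbf{1}\mathbf{v}^T)^j = (1-T^n)^{j-1}\mathbf{1}\mathbf{v}^T$; reading off the $(n,1)$-entry recovers $p_{n,k} = (1-T)\frac{1-T^{nk}}{1-T^n}$ with no induction. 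This decomposition also makes Proposition~\ref{thm:twist_rep} and the limit in Proposition~\ref{thm:twist_stab} transparent at a glance (the $T^{nk}I$ term vanishes in the $T$-adic limit, leaving $\frac{1}{1-T^n}\mathbf{1}\mathbf{v}^T$), so it buys more than just this lemma; the paper's row-by-row computation has the minor advantage of not requiring one to first verify the rank-one form of $\psi(\Omega_n)$.
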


	\begin{proof}
		Proposition \ref{thm:twist_rep} handles the $k = 1$ case for all $n$, and the proof proceeds by induction on $k$. Using (\ref{eq:pre_twist_pow_rep}), we compute
		\begin{align*}
			p_{n,k+1} &= \psi(\Omega_n^{k+1})_{n1} \\
			&= \sum_{i = 1}^n \psi(\Omega_n^k)_{ni}\psi(\Omega_n)_{i1} \\
			&= \psi(\Omega_n^k)_{n1}\psi(\Omega_n)_{11} + \sum_{i = 2}^{n - 1} \psi(\Omega_n^k)_{ni}\psi(\Omega_n)_{i1} + \psi(\Omega_n^k)_{nn}\psi(\Omega_n)_{n1} \\
			&= p_{n,k}(1 - T + T^n) + p_{n,k}(T - T^{n - 1}) + (1 - T - p_{n,k}(1 - T^{n - 1})) \\
			&= 1 - T + T^np_{n,k},
		\end{align*}
		which is the correct sum.
	\end{proof}

	We summarize our findings as follows:
	
	\begin{prop}
		\label{thm:twist_pow_rep}
		For all $k \geq 1$ and $n \geq 2$, $\psi(\Omega^k_n)_{ij}$ is given by (\ref{eq:pre_twist_pow_rep}), where $p_{n,k}$ is the polynomial of Lemma \ref{thm:twist_pow_q}.
	\end{prop}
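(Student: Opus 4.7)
The proof is essentially a matter of assembling results already established in this section, so my plan is to present the proposition as a direct consequence of Lemma \ref{thm:central_rep} and Lemma \ref{thm:twist_pow_q}.

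First I would observe that $\Omega_n$ generates the center of $B_n$, so $\Omega_n^k$ is central in $B_n$ for every $k \geq 1$. Consequently $\psi(\Omega_n^k)$ commutes with $\psi(\beta)$ for all $\beta \in B_n$, and Lemma \ref{thm:central_rep} applies to give the existence of a polynomial $p_{n,k} = p_{\psi(\Omega_n^k)} \in \Z[T,T^{-1}]$ such that the off-diagonal entries are $\psi(\Omega_n^k)_{ij} = T^{j-1}p_{n,k}$ for $i \neq j$. The diagonal entries are then forced by the row-sum identity (any row of $\psi(\beta)$ sums to $1$, which follows from the probabilistic interpretation (\ref{eq:braid_prob}) and is clearly preserved under the generator matrices in (\ref{eq:sigma_rep}) and thus under products). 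This produces exactly the form of (\ref{eq:pre_twist_pow_rep}).

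Next I would invoke Lemma \ref{thm:twist_pow_q}, which identifies $p_{n,k} = (1 - T)\sum_{i=0}^{k-1} T^{ni}$ via induction on $k$: the base case $k = 1$ reduces to $\psi(\Omega_n)_{n1} = 1 - T$ from Proposition \ref{thm:twist_rep}, and the inductive step computes $\psi(\Omega_n^{k+1})_{n1}$ by expanding the matrix product $\psi(\Omega_n^k)\psi(\Omega_n)$ along the $n$th row using the formula (\ref{eq:pre_twist_pow_rep}) with the as-yet-unknown $p_{n,k}$.

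Since both ingredients have already been proven in the preceding discussion, the proposition follows by simply combining them, and there is no real obstacle — the main conceptual content is already in the lemmas. The only thing worth double-checking in a final write-up is that the formulation of (\ref{eq:pre_twist_pow_rep}) genuinely covers the $k=0$ case trivially (where $p_{n,0} = 0$ matches $\psi(I) = I$), though the statement restricts to $k \geq 1$ so this is not strictly needed.
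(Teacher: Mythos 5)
Your proposal is correct and follows essentially the same route as the paper: centrality of $\Omega_n^k$ plus Lemma \ref{thm:central_rep} yields the form (\ref{eq:pre_twist_pow_rep}), the row-sum identity pins the diagonal, and Lemma \ref{thm:twist_pow_q} identifies $p_{n,k}$ by induction from the $\psi(\Omega_n)_{n1} = 1 - T$ base case. The paper treats this proposition purely as a summary of the preceding discussion, which is exactly how you have organized your argument.
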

	
	\subsection{Asymptotic behavior}
	\label{sec:ab}

	To describe the limiting behavior of $p_{n,k}$ as $k \to \infty$, we introduce the following terminology.
	
	\begin{defn}
		\label{def:stab}
		Let $\{p_t\}_{t \in \N}$ be a sequence of Laurent polynomials in $\Z[T,T^{-1}]$, and let $(p_t)_r$, $r \in \Z$, denote the coefficient of $T^r$ in $p_t$. We say $\{p_t\}$ {\em stabilizes positively} if for any $r_0 \in \Z$ there exists $t_0 \in \N$, such that for all $t \geq t_0$ and all $r \leq r_0$,
		$$
		(p_t)_r = (p_{t_0})_r.
		$$
		If $\{A_t\}_t = \{(a(t)_{ij})\}_t$ is a sequence of matrices in $M_n(\Z[T,T^{-1}])$, we say $\{A_t\}$ {\em stabilizes positively} if the sequences $\{a(t)_{ij}\}_t$ do so for all $i$ and $j$. In either case, we write $\lim_{t \to \infty} p_t$ to denote the limiting Laurent series or matrix of series.
	\end{defn}

	If we consider sequences of polynomials in $\Z[T]$ rather than $\Z[T,T^{-1}]$, then positive stabilization is just convergence in the $T$-adic norm on $\Z[[T]]$. It is clear that the $p_{n,k}$ of Lemma \ref{thm:twist_pow_q} stabilize positively as $k \to \infty$, and the limit is given explicitly by
	\begin{equation}
		\label{eq:limit_poly}
		\lim_{k \to \infty} p_{n,k} = (1 - T)\sum_{i = 0}^\infty T^{ni} = \frac{1 - T}{1 - T^n} = \frac{1}{1 + T + \cdots + T^{n - 1}}.
	\end{equation}

	More generally, we have the following elementary lemma.
	
	\begin{lemma}
		\label{lem:stab_cont}
		If $\{p_t\}_{t \in \N}$ and $\{q_t\}_{t \in \N}$ are two sequences of Laurent polynomials which stabilize positively, then the sequences $\{p_t + q_t\}_t$ and $\{p_tq_t\}_t$ stabilize positively as well. Furthermore,
		$$
		\lim_{t \to \infty} (p_t + q_t) = \lim_{t \to \infty} p_t + \lim_{t \to \infty} q_t
		$$
		and
		$$
		\lim_{t \to \infty} (p_tq_t) = (\lim_{t \to \infty} p_t)(\lim_{t \to \infty} q_t).
		$$
	\end{lemma}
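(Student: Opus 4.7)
The plan is to verify both claims directly from the definition of positive stabilization. The additive case is essentially immediate; the multiplicative case requires a short argument controlling the indices that contribute to a convolution.

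For addition I use the observation that $(p_t + q_t)_r = (p_t)_r + (q_t)_r$ for every $r \in \Z$. Given $r_0 \in \Z$, choosing $t_0$ larger than the individual stabilization thresholds of $\{p_t\}$ and $\{q_t\}$ at $r_0$ suffices, and the identity $\lim(p_t + q_t) = \lim p_t + \lim q_t$ follows coefficient-by-coefficient.

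For multiplication the first step is to show that each limit $p_\infty := \lim_{t \to \infty} p_t$ is a formal Laurent series whose support is bounded below, which is what makes the product of two such limits well-defined. The key input is that each $p_t$ is a Laurent polynomial: applying the stabilization hypothesis with $r_0 = 0$ yields $t_0$ with $(p_t)_r = (p_{t_0})_r$ for all $t \geq t_0$ and $r \leq 0$. Since $p_{t_0}$ has finite support, there is some $m_p \in \Z$ with $(p_{t_0})_r = 0$ for $r < m_p$; hence $(p_t)_r = 0$ for all such $t$ and $r$, and therefore $(p_\infty)_r = 0$ for $r < m_p$. Define $m_q$ analogously. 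The convolution $(p_\infty q_\infty)_r := \sum_{k + \ell = r}(p_\infty)_k (q_\infty)_\ell$ is then a finite sum for each $r$, ranging over $m_p \leq k \leq r - m_q$.

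Given a cutoff $r_0$, I would then enlarge $t_0$ so that three things hold simultaneously for $t \geq t_0$: the supports of $p_t$ and $q_t$ lie in $[m_p, \infty)$ and $[m_q, \infty)$ respectively; $(p_t)_k = (p_\infty)_k$ for all $k \leq r_0 - m_q$; and $(q_t)_\ell = (q_\infty)_\ell$ for all $\ell \leq r_0 - m_p$. For any $r \leq r_0$, the nonzero contributions to $(p_tq_t)_r = \sum_{k + \ell = r}(p_t)_k(q_t)_\ell$ satisfy $m_p \leq k \leq r - m_q \leq r_0 - m_q$, and all the coefficients appearing have already stabilized to their limits. Hence $(p_tq_t)_r = (p_\infty q_\infty)_r$ exactly, yielding both the stabilization of $\{p_tq_t\}$ and the identity of limits. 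The argument is entirely bookkeeping; the only subtle point is recognizing that one needs the supports to be bounded below before the convolution formula makes sense, which is why the support observation must come first.
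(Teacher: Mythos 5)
Your proof is correct and follows essentially the same route as the paper's: both establish a lower bound on the support of the tail of each sequence, use this to make the convolution formula finite, and then enlarge the stabilization threshold so that all contributing coefficients have already settled. The only cosmetic difference is that you keep the bounds $m_p$ and $m_q$ separate (giving the slightly tighter index range $[m_p,\,r-m_q]$) where the paper collapses them to $m = \min(m_p,m_q)$.
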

	
	\begin{proof}
		It is easy to check that $\{p_t + q_t\}_t$ stabilizes positively and has the desired limit.
		
		For the product sequence $\{p_tq_t\}$, given $p \in \Z[T,T^{-1}]$, let $(p)_r$ denote the coefficient of $T^r$ in $p$. By definition, we may choose $t_0$ such that for all $t \geq t_0$ and all $r \leq 0$, $(p_t)_r = (p_{t_0})_r$ and $(q_t)_r = (q_{t_0})_r$. If $p_{t_0}$ contains terms with negative degree, let $m_p$ be the minimal degree among these. Otherwise let $m_p = 0$. Define $m_q$ analogously for $q_{t_0}$, and set $m = \min(m_p,m_q)$. By our choice of $t_0$, for all $t \geq t_0$ and all $r < m$,
		$$
		(p_t)_r = (q_t)_r = 0.
		$$
		Now fix $r_1 \in \Z$ arbitrarily. Choose $t_1 > t_0$ such that the first $r_1 + |m|$ coefficients of $p_t$ and $q_t$ have stabilized for all $t \geq t_1$. Then for all $t \geq t_1$ and all $r \leq r_1$, by the above equation,
		$$
		(p_tq_t)_r = \sum_{i = m}^{r + |m|} = (p_t)_i(q_t)_{r - i} = \sum_{i = m}^{r + |m|} = (p_{t_1})_i(q_{t_1})_{r - i} = (p_{t_1}q_{t_1})_r.
		$$
		Thus the sequence $\{p_tq_t\}_t$ stabilizes positively with the desired limit.
	\end{proof}
	
	\begin{prop}
		\label{thm:twist_stab}
		For all fixed $n \geq 2$, the sequence of matrices $\{\psi(\Omega_n^k)\}_{k \in \N}$ stabilizes positively as $k \to \infty$. The limit is formally equivalent to a matrix of rational functions in $T$, given explicitly by
		\begin{equation}
			\label{eq:inf_prob}
			\psi(\Omega^\infty_n)_{ij} = \lim_{k \to \infty} \psi(\Omega_n^k)_{ij} = \frac{T^{j - 1}}{1 + T + \cdots + T^{n - 1}}.
		\end{equation}
	\end{prop}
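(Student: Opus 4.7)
The plan is to reduce everything to the scalar sequence $\{p_{n,k}\}_k$ computed in \Cref{thm:twist_pow_q} and then invoke \Cref{lem:stab_cont}. \Cref{thm:twist_pow_rep} expresses every entry of $\psi(\Omega_n^k)$ as a $\Z[T,T^{-1}]$-linear combination of $1$ and $p_{n,k}$, so once the scalar sequence stabilizes positively, the matrix sequence stabilizes entrywise and the limit matrix can be read off directly.

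First I would observe that $p_{n,k} = (1-T)\sum_{i=0}^{k-1} T^{ni}$ stabilizes positively. Passing from $p_{n,k}$ to $p_{n,k+1}$ adds only terms of degree $\ge nk$, so every fixed coefficient eventually settles, which is exactly stabilization in the sense of \Cref{def:stab}. The formal limit is the geometric series $(1-T)\sum_{i=0}^{\infty} T^{ni}$, which as a rational function equals $\frac{1-T}{1-T^n} = \frac{1}{1+T+\cdots+T^{n-1}}$; this is precisely equation \eqref{eq:limit_poly}.

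Next I would apply \Cref{lem:stab_cont} entrywise to the formula from \Cref{thm:twist_pow_rep}. For $i \neq j$, the entry $T^{j-1}p_{n,k}$ stabilizes to $\frac{T^{j-1}}{1+T+\cdots+T^{n-1}}$, which is exactly \eqref{eq:inf_prob}. For $i = j$, the entry $1 - p_{n,k}\sum_{m=1,\,m\neq i}^{n} T^{m-1}$ stabilizes to
$$
1 - \frac{1}{1+T+\cdots+T^{n-1}}\Big(\sum_{m=0}^{n-1} T^m - T^{i-1}\Big) = \frac{T^{i-1}}{1+T+\cdots+T^{n-1}},
$$
where I used $\sum_{m=1,\,m\neq i}^{n} T^{m-1} = (1+T+\cdots+T^{n-1}) - T^{i-1}$. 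The diagonal case therefore collapses to the same unified expression with $j = i$, verifying \eqref{eq:inf_prob} in all cases.

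There is no serious obstacle here: the real computational work was already carried out in \Cref{thm:twist_pow_rep} and \Cref{thm:twist_pow_q}, and the stabilization bookkeeping is packaged in \Cref{lem:stab_cont}. The only step requiring any calculation is the brief algebraic simplification showing the diagonal and off-diagonal limit formulas agree, which is a direct cancellation once the sum $\sum_{m=0}^{n-1} T^m$ is factored out.
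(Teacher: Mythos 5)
Your proof is correct and follows the same route as the paper's: reduce to the scalar sequence $p_{n,k}$, invoke Lemma~\ref{lem:stab_cont} entrywise, and substitute the limit \eqref{eq:limit_poly} into Proposition~\ref{thm:twist_pow_rep}. You include the diagonal-case simplification explicitly, which the paper leaves to the reader, but the argument is otherwise identical.
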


	\begin{proof}
		Since the sequence $\{p_{n,k}\}_k$ of \Cref{thm:twist_pow_q} stabilizes positively for all fixed $n$, the positive stabilization of the sequence $\{\psi(\Omega_n^k)\}_{k \in \N}$ follows immediately from Proposition \ref{thm:twist_pow_rep} and Lemma \ref{lem:stab_cont}. The last part of the theorem follows from Lemma \ref{lem:stab_cont} as well, allowing us to substitute (\ref{eq:limit_poly}) for $p_{n,k}$ in (\ref{eq:pre_twist_pow_rep}) to compute the limit.
	\end{proof}

	Returning to the random walks model, we have:
	
	\begin{cor}
		\label{thm:prob_stab}
		For fixed $n \geq 2$ and $k \geq 1$, let $s^k_i$ and $t^k_j$ denote the $i$th incoming and $j$th outgoing strand respectively of the braid $\Omega_n^k$ consisting of $k$ full positive twists. Then the sequence $\{g(s^k_i,t^k_j)\}_{k \in \N}$ stabilizes positively for all $i$ and $j$.
	\end{cor}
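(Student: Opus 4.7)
The plan is to observe that this corollary is essentially a direct translation of Proposition \ref{thm:twist_stab} into the language of the random walks model, via the probabilistic interpretation of the Burau representation. All the real work has already been done in proving the stabilization of $\psi(\Omega_n^k)$ as $k \to \infty$.

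First, I would invoke Jones's identity (\ref{eq:braid_prob}), which says that for any braid $\beta \in B_n$, the entry $\psi(\beta)_{ij}$ equals the Green's function value $g(s_i, t_j)$ for the Markov chain induced by any diagram of $\beta$. Applied to $\beta = \Omega_n^k$, this gives
$$
g(s^k_i, t^k_j) = \psi(\Omega_n^k)_{ij}
$$
for every fixed $k \geq 1$ and every $i,j$. Here the equation is valid in $\Z(T)$, interpreted as Laurent series about $T = 1$ (or, equivalently, via Lemma \ref{lem:path_conv}, as entries of the matrix $(I - A)^{-1}$ for the transition matrix $A$ of a diagram of $\Omega_n^k$).

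Next, I would directly invoke Proposition \ref{thm:twist_stab}, which asserts that the matrix sequence $\{\psi(\Omega_n^k)\}_k$ stabilizes positively with each entry converging to the explicit rational function $T^{j-1}/(1 + T + \cdots + T^{n-1})$. Under the identification of the previous paragraph, positive stabilization of the matrix entries is exactly positive stabilization of the sequence $\{g(s^k_i, t^k_j)\}_k$ for every pair $(i,j)$, in the sense of Definition \ref{def:stab}. This completes the argument.

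There is no substantive obstacle here, since the hard computational content, namely the explicit formula for $\psi(\Omega_n^k)$ in Proposition \ref{thm:twist_pow_rep} and the subsequent passage to the limit in Proposition \ref{thm:twist_stab}, is already in hand. The only thing worth being careful about is that (\ref{eq:braid_prob}) is a statement about Green's functions of tangle Markov chains, which a priori only makes sense near $T = 1$; but positive stabilization is a purely formal (coefficient-wise) statement about the Laurent expansions at $T = 0$, and these two viewpoints are reconciled by the fact that both sides of $g(s^k_i, t^k_j) = \psi(\Omega_n^k)_{ij}$ are the same element of $\Z(T) \subset \Z((T))$ via Lemma \ref{lem:path_conv}.
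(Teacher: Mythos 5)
Your argument is correct and is exactly what the paper intends: the corollary is an immediate consequence of combining Jones's identity (\ref{eq:braid_prob}), which identifies $g(s^k_i,t^k_j)$ with $\psi(\Omega_n^k)_{ij}$, and the positive stabilization of $\{\psi(\Omega_n^k)\}_k$ from Proposition \ref{thm:twist_stab}. Your extra remark reconciling the analytic (near $T=1$) and formal (coefficient-wise) viewpoints via Lemma \ref{lem:path_conv} is a sensible precaution, though the paper leaves it implicit.
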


	\begin{rmk}
		The expression (\ref{eq:inf_prob}) can be interpreted amusingly (and non-rigorously) as the probability that a bowling ball traversing an infinite torus braid will exit at the $j$th outgoing strand, and it is interesting to note that this probability does not depend on where the ball enters the braid. This lack of $i$ dependence can be explained intuitively as follows: since the braid is infinite, if the ball visits the $i$th strand of the braid then it may as well have entered on that strand. Eventually the ball will visit every strand, so its starting point does not matter.
	\end{rmk}
	
	\begin{rmk}
		\label{rem:roz}
		We have shown here that the Burau representation of an infinite torus braid can be well defined as a limit. Although we were unable to find these results presented elsewhere in the literature, we note that the related {\em Temperley-Leib} representation of an infinite torus braid is also known to be a well-defined limit---as Rozansky observes, this limit coincides with the Jones-Wenzl idempotent \cite{roz14}. In this context, our results here are not surprising.
	\end{rmk}
	
	\section{Stabilization of the Alexander polynomial}
	\label{sec:alex}
	
	Since the $\rho_1$ invariant is normalized by the Alexander polynomial, understanding the asymptotics of the latter is necessary for proving Theorem \ref{thm:main}. Fortunately the behavior of the Alexander polynomial under twisting is well understood. Let $K \subset S^3$ be a knot and $L \subset S^3$ an unknot such that $K \cup L$ is a two-component link with linking number $n \neq 0$. Let $K_t$ be the knot resulting from performing $1/t$ surgery on $L$; then the classical {\em Torres formulas} imply that
	\begin{equation}
		\label{eq:torres}
		\Delta_{K_t}(T) = \frac{T - 1}{T^n - 1} \Delta_{K \cup L}(T,T^{tn}) = \frac{\Delta_{K \cup L}(T,T^{tn})}{1 + T + \dots + T^{n - 1}},
	\end{equation}
	where $\Delta_{K \cup L}$ here indicates the {\em multi-variable Alexander polynomial} with first variable corresponding to $K$ and second corresponding to $L$ \cite{tor53, sw04, bamo17-2}.

	\begin{lemma}
		\label{lem:alex}
		Let $\{K_t\}$ be a coherently oriented family of knots twisted along $n$ strands. Then the sequence $\{T^{tn(n -1)/2}\Delta_{K_t}(T)\}_t$ stabilizes positively to a rational function as $t \to \infty$.
	\end{lemma}
	
	We give two proofs of \Cref{lem:alex}. The first uses the identity (\ref{eq:torres}) and we omit details for the sake of space, and the second proof uses the machinery developed in the preceeding two sections. (A third proof of the stabilization of the Alexander polynomial under twisting can be found in \cite{che22}.) Though the second argument is significantly more involved, we present it as a preview of the techniques used in \Cref{sec:rho} to work with $\rho_1$.
	
	\begin{proof}[First proof (sketch).]
		Using the notation of (\ref{eq:torres}), we first observe that the sequence
		$$
		\{\Delta_{K \cup L}(T,T^{tn})\}_t
		$$
		stabilizes positively. Indeed, let $m$ be the minimum degree of any monomial of $\Delta_L(T,T^n)$, and let $\big( \Delta_{K \cup L}(T,T^{tn}) \big)_r \in \Z$ denote the coefficient of $T^r$ in $\Delta_{K \cup L}(T,T^{tn})$. Then for arbitrary $t_0$ and all $t > t_0$, it is easy to check that
		$$
		\big( \Delta_{K \cup L}(T,T^{tn}) \big)_r = \big( \Delta_{K \cup L}(T,T^{t_0n}) \big)_r
		$$
		for all $r < t_0n + m$. In the limit the terms with $t$ in the exponent escape to infinity, and we have
		$$
		\lim_{t \to \infty} \Delta_{K \cup L}(T,T^{tn}) = \Delta_{K \cup L}(T,0).
		$$
		It follows from this, from (\ref{eq:torres}), and from continuity properties similar to those of \Cref{lem:stab_cont} that the sequence of symmetrized Alexander polynomials $\{\Delta_{K_t}\}$ satisfies
		\begin{equation}
			\label{eq:easy}
			\lim_{t \to \infty} \{\Delta_{K_t}\}_t = \frac{\Delta_{K \cup L}(T,0)}{1 + T + \dots + T^{n - 1}}
		\end{equation}
		up to normalization by a power of $T$. With some additional work, one can show that the correct normalization is multiplication by $T^{tn(n -1)/2}$ on the left side.
	\end{proof}
	
	The following definition will be needed in our second proof of \Cref{lem:alex}, and in \Cref{sec:rho}.
	
	\begin{defn}
		\label{def:compat}
		Let $\{K_t\}_{t \geq 0}$ be a coherently oriented family of knots in $S^3$, twisted on $n$ strands. We call a sequence of long knot diagrams $\{D_t\}_{t \geq 0}$ {\em compatible} with the family if:
		\begin{itemize}
			\item For all $t$, $D_t \subset I^2$ is a long knot diagram of $K_t$.
			\item $D_t$ is obtained by inserting $t$ full twists on $n$ parallel strands in a fixed region $U_0 \subset D_0$.
		\end{itemize}
		We call the region $U_t \subset D_t$ replacing $U_0$ the {\em twisted region} of $D_t$.
	\end{defn}

	It is clear that a compatible family of diagrams can be found for any twisted family of knots. Additionally, Definition \ref{def:compat} implies that for any such family of diagrams $\{D_t\}$ with twisted regions $U_t \subset D_t$, the subdiagrams $D_t - U_t$ are identical for all $t$. The twisted region $U_t$ consists of the braid $\Omega^t_n$ of Section \ref{sec:burau}.
	
	\begin{proof}[Second proof of \protect\Cref{lem:alex}.]
		Let $\{D_t\}$ be a family of long knot diagrams compatible with the $K_t$, and let $U_t$ denote the twisted region of $D_t$ for all $t$. Let $D_t/U_t$ be the Markov chain formed by contracting $D_t$ by $U_t$, as in Definition \ref{def:cmc}, and let $A_t$ and $A_t^U$ be transition matrices for $D_t$ and $D_t/U_t$ respectively. Since $U_t$ admits no cycles, (\ref{eq:alex}) and Proposition \ref{thm:cmc_det} give
		$$
		T^{(\varphi(D_t) + w(D_t))/2}\Delta_{K_t}(T) = \det(I - A_t) = \det(I - A^U_t)
		$$
		for all $t$. Additionally $\varphi(D_t) = \varphi(D_0)$ for all $t$, and since a full twist on $n$ strands has $n(n - 1)$ positive crossings,
		$$
		w(D_t) = w(D_0) + tn(n - 1).
		$$
		Thus
		\begin{equation}
			\label{eq:alex_seq}
			T^{tn(n - 1)/2} \Delta_{K_t}(T) = T^{(-\varphi(D_0) - w(D_0))/2}\det(I - A^U_t).
		\end{equation}
		Since $D_t - U_t$ is identical to $D_0 - U_0$ for all $t$, we have natural identifications
		$$
		\mathcal{S}(D_t/U_t) \leftrightarrow \mathcal{S}(D_0 - U_0)
		$$
		and the matrices $\{A^U_t\}$ are all of the same dimension. We also assume the states $\mathcal{S}(D_t/U_t)$ have been ordered using a fixed ordering of $\mathcal{S}(D_0 - U_0)$ for all $t$. To prove the lemma, by (\ref{eq:alex_seq}) and Lemma \ref{lem:stab_cont}, it suffices to show the sequence $\{A^U_t\}_t$ stabilizes positively to a matrix of rational functions.
		
		We choose $i$ and $j$ indexing states $s_i$ and $s_j$ of $D_0 - U_0$, which we identify with elements of $	\mathcal{S}(D_t/U_t)$ for all $t$. By definition, if $s_i$ is not an incoming strand of $U_t$ or $s_j$ is not an outgoing one, then
		\begin{equation}
			\label{eq:first_case}
			(A^U_t)_{ij} = a_{D_t/U_t}(s_i,s_j) = a_{D_0}(s_i,s_j)
		\end{equation}
		and the sequence $\{(A^U_t)_{ij}\}$ is constant. Otherwise
		\begin{equation}
			\label{eq:second_case}
			(A^U_t)_{ij} = a_{D_t/U_t}(s_i,s_j) = g|_{\mathcal{S}(U_t)}(s_i,s_j) = \psi(\Omega_n^t)_{k_i,k_j} 
		\end{equation}
		where $\psi$ is the Burau representation as in Section \ref{sec:burau}, $s_i$ is the $k_i$th incoming strand of $\Omega^t_n$, and $s_j$ is the $k_j$th outgoing one. By Proposition \ref{thm:twist_stab} the sequence $\{\psi(\Omega_n^t)_{k_i,k_j}\}_t$ stabilizes positively, so $\{(A^U_t)_{ij}\}_t$ stabilizes positively for all $i$ and $j$. Thus $\{A_t^U\}_t$ stabilizes positively, and so does the Alexander polynomial.
		
		Considering the limit in the first case (\ref{eq:first_case}), we have
		$$
		\lim_{t \to \infty} (A^U_t)_{ij} = a_{D_0}(s_i,s_j),
		$$
		and in (\ref{eq:second_case}) by Proposition \ref{thm:twist_stab} we have
		$$
		\lim_{t \to \infty} (A^U_t)_{ij} = \lim_{t \to \infty} \psi(\Omega_n^t)_{k_i,k_j} = \frac{T^{k_j - 1}}{1 + T + \cdots + T^{n - 1}}.
		$$
		Since the limit is a rational function in both cases,
		\begin{equation}
			\label{eq:last_alex}
			\lim_{t \to \infty}T^{tn(n - 1)/2} \Delta_{K_t} = T^{-(\varphi(D_0) + w(D_0))/2}\det(I - \lim_{t \to \infty} A^U_t)
		\end{equation}
		is rational.
	\end{proof}

	Both proofs of \Cref{lem:alex} show how to easily calculate the limit $\lim_{t \to \infty} \Delta_{K_t}$ for any coherently oriented, twisted family of knots. To further aid computation we introduce the notion of an {\em infinite twist vertex}, which will be useful in Section \ref{sec:rho} as well.
	
	\begin{defn}
		\label{def:twist_vert}
		An {\em infinite twist vertex with in-degree $n$}, $n \geq 2$, is a vertex drawn in the plane with $n$ adjacent incoming edges, temporarily labeled $s_1, \dots, s_n$ from left to right, and $n$ adjacent outgoing edges labeled $t_1, \dots, t_n$. This vertex, together with its $2n$ edges, is interpreted as a Markov chain (or part of a larger tangle Markov chain) via the following transition function rules for all $1 \leq i,j \leq n$:
		\begin{align*}
			a(s_i,s_j) &= a(t_i,t_j) = 0 \\
			a(s_i, t_j) &= \frac{T^{j - 1}}{1 + T + \cdots + T^{n - 1}}.
		\end{align*}
	\end{defn}
	As the name suggests, an infinite twist vertex is meant to represent a twist region containing infinitely many full twists---compare the above values with those in Proposition \ref{thm:twist_stab}. We use infinite twist vertices to define two diagrams associated to any compatible family.
	
	\begin{defn}
		\label{def:spec_diags}
		Let $\{K_t\}$ be a coherently oriented family of knots twisted on $n$ strands, and let $\{D_t\}$ be a compatible family of diagrams as in \Cref{def:compat}. In this context, let $D_\infty$ be the diagrammatic Markov chain obtained by replacing the twisted region $U \subset D_0$ with an infinite twist vertex with in-degree $n$. Additionally, let $D^\tau_\infty$ be the Markov chain constructed by replacing $U$ with an infinite twist vertex followed by a full twist on $n$ strands and then a second infinite twist vertex. We label the full twist between the two infinite twist vertices in the latter case by $\tau_\infty \subset D^\tau_\infty$.
	\end{defn}

	See Figures \ref{fig:d_inf} and \ref{fig:d_tau_inf} for examples of $D_\infty$ and $D^\tau_\infty$ for the family of $(2,q)$-torus knots. The Markov chain $D^\tau_\infty$ will be used in Section \ref{sec:rho}.
	
	Let $\{K_t\}$ be a coherently oriented family of knots twisted on $n$ strands, let $\{D_t\}$ be a set of compatible diagrams with twisted regions $U_t$, and let $D_\infty$ be as in \Cref{def:spec_diags}. Let $A^U_t$ be a transition matrix for $D_t/U_t$, as in the second proof of \Cref{lem:alex}, and let $A^U_\infty$ be the transition matrix for $D_\infty$ determined by the same ordering on $\mathcal{S}(D_0 - U_0)$. Comparing the limits in the second proof of \Cref{lem:alex} with the values in Definition \ref{def:twist_vert}, we find that
	$$
	\lim_{t \to \infty} A^U_t = A^U_\infty.
	$$
	This and (\ref{eq:last_alex}) imply:
	
	\begin{cor}
		With notation as above,
		\label{cor:pre_lim_alex}
		$$
			\lim_{t \to \infty}T^{tn(n - 1)/2} \Delta_{K_t} = T^{(-\varphi(D_0) -w(D_0))/2}\det(I - A^U_\infty).
		$$
	\end{cor}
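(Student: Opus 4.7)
The plan is to combine the two ingredients already assembled in the first proof of Proposition \ref{prop:alex}. First, I would recall the identity
$$
T^{tn(n - 1)/2} \Delta_{K_t}(T) = T^{(-\varphi(D_0) - w(D_0))/2}\det(I - A^U_t),
$$
which is equation (\ref{eq:alex_seq}) and holds for every $t \in \N$. The goal is then simply to justify passing to the limit $t \to \infty$ on the right-hand side and to identify that limit with the stated expression.

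Second, I would invoke the entry-wise identification $\lim_{t \to \infty} A^U_t = A^U_\infty$ established in the paragraph immediately preceding the corollary. This identification comes from comparing the two cases (\ref{eq:first_case}) and (\ref{eq:second_case}) of the first proof of Proposition \ref{prop:alex} with the transition-function rules in Definition \ref{def:twist_vert}: the entries not crossing the twist region are constant in $t$ and equal $a_{D_0}(s_i,s_j)$, exactly matching $A^U_\infty$; the entries crossing it are $\psi(\Omega_n^t)_{k_i,k_j}$, which stabilize positively to $T^{k_j-1}/(1 + T + \cdots + T^{n-1})$ by Proposition \ref{thm:twist_stab}, again matching the infinite twist vertex by definition.

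The only step requiring any care is the commutation of the limit with the determinant. Because the matrices $A^U_t$ all have the fixed dimension $|\mathcal{S}(D_0 - U_0)|$, $\det(I - A^U_t)$ is a \emph{single} polynomial expression (the Leibniz expansion) in finitely many of its entries. Lemma \ref{lem:stab_cont}, which states that positive stabilization is preserved under sums and products of Laurent polynomials, then applies term-by-term to give
$$
\lim_{t \to \infty}\det(I - A^U_t) = \det\bigl(I - \lim_{t \to \infty} A^U_t\bigr) = \det(I - A^U_\infty).
$$
Multiplying by the $t$-independent prefactor $T^{(-\varphi(D_0) - w(D_0))/2}$ yields the claimed formula.

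I do not anticipate a real obstacle: all the difficult work was carried out in Propositions \ref{thm:cmc_det} and \ref{thm:twist_stab} and in the positive-stabilization machinery of Section \ref{sec:ab}. The corollary is essentially a diagrammatic restatement of the concluding equation (\ref{eq:last_alex}) of the first proof of Proposition \ref{prop:alex}, repackaging the matrix limit $\lim_{t \to \infty} A^U_t$ as the transition matrix of the infinite twist vertex diagram $D_\infty$, which is the form best suited to explicit computation.
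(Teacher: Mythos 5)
Your proposal is correct and follows essentially the same route as the paper: the paper establishes $\lim_{t\to\infty} A^U_t = A^U_\infty$ by comparing the limits computed in the first proof of Proposition~\ref{prop:alex} with Definition~\ref{def:twist_vert}, and then cites equation~(\ref{eq:last_alex}) directly. The only cosmetic difference is that you start from the intermediate identity~(\ref{eq:alex_seq}) and re-justify commuting $\lim$ with $\det$ via Lemma~\ref{lem:stab_cont}, which is precisely the step the paper has already performed in passing from~(\ref{eq:alex_seq}) to~(\ref{eq:last_alex}).
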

	As an example we consider the family $K_t = {\bf T}(2,2t + 1)$, $t \geq 0$, where {\bf T}$(p,q)$ is the $(p,q)$-torus knot. For this family the diagram $D_\infty$ can be drawn as in Figure \ref{fig:d_inf}, where the vertex is an infinite twist vertex. Then
	$$
	A^U_\infty = \begin{bmatrix}
		0 & T &0 & 1 - T & 0 \\
		0 & 0 & \frac{T}{1 + T} & 0 & \frac{1}{1 + T}  \\
		0 & 0 & 0 & 1 & 0 \\
		0 & 0 & \frac{T}{1 + T}  & 0 & \frac{1}{1 + T} \\
		0 & 0 & 0 & 0 & 0
	\end{bmatrix}.
	$$
	Since $\varphi(D_0) = -1$ and $w(D_0) = 1$, we compute
	$$
		\lim_{t \to \infty}T^{tn(n - 1)/2} \Delta_{K_t} = \det(I - A^U_\infty) = \frac{1}{T + 1} = 1 - T + T^2 - T^3 + \cdots
	$$
	This is indeed the limit of the Alexander polynomials of $(2,2t + 1)$-torus knots as $t \to \infty$, as can be verified using (\ref{eq:easy}).
	
	\begin{figure}[t]
		\labellist
		\small \hair 2pt
		\pinlabel $U_0$ at -24 180
		\pinlabel $\tau_\infty$ at 585 213
		\endlabellist
		
		\subcaptionbox{$D_0$ \label{fig:d_zero}}{
			\includegraphics[height=3.5cm]{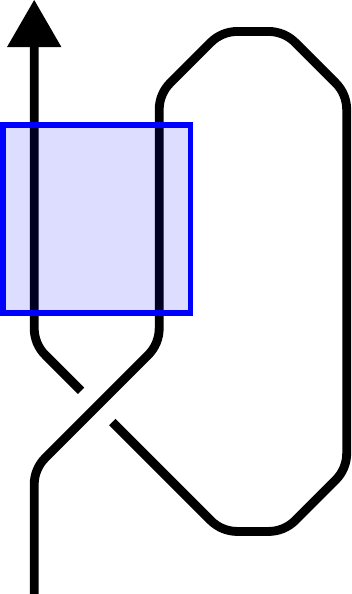}
		}
		\hspace{1cm}
		\subcaptionbox{$D_\infty$ \label{fig:d_inf}}{
			\includegraphics[height=3.5cm]{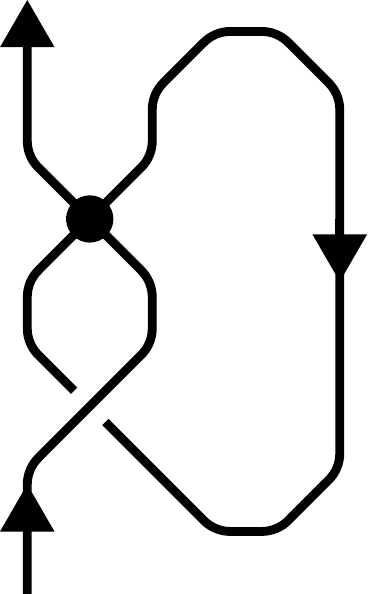}
		}
		\hspace{1cm}
		\subcaptionbox{$D^\tau_\infty$ \label{fig:d_tau_inf}}{
			\includegraphics[height=4.5cm]{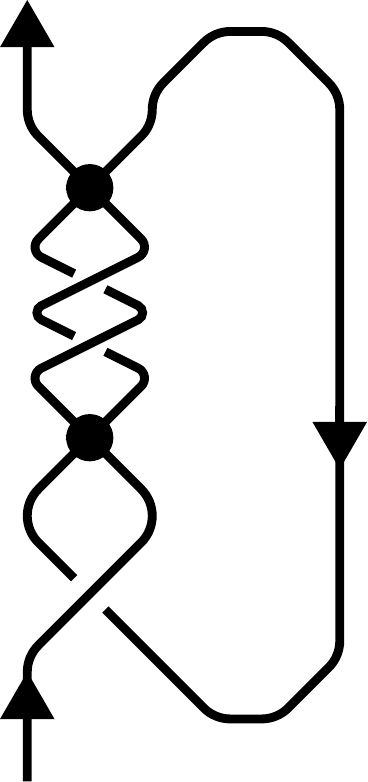}
		}
		\caption{$D_0$, $D_\infty$ and $D^\tau_\infty$ for the family of $(2,2t + 1)$-torus knots}
	\end{figure}

	We now apply this computation technique to prove the following lemma, which will be needed in the next section.

	\begin{lemma}
		\label{thm:alex_limit}
		Let $\{K_t\}$ be a family of knots twisted on $n$ coherently oriented strands. Then the limit of Alexander polynomials, considered as a rational function in $\Z(T)$, satisfies
		$$
		(\lim_{t \to \infty} \Delta_{K_t})|_{T = 1} = \pm \frac{1}{n}.
		$$
		In particular, $\lim_{t \to \infty} \Delta_{K_t}$ is not a polynomial.
	\end{lemma}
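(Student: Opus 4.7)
By \Cref{cor:pre_lim_alex}, for any compatible family of diagrams $\{D_t\}$ for $\{K_t\}$, the limit equals $T^{(-\varphi(D_0)-w(D_0))/2}\det(I - A^U_\infty)$, where $A^U_\infty$ is the transition matrix of the Markov chain $D_\infty$ obtained from $D_0 - U_0$ together with an infinite twist vertex of in-degree $n$ (\Cref{def:spec_diags}). At $T = 1$ the prefactor $T^{(-\varphi(D_0)-w(D_0))/2}$ evaluates to $\pm 1$, so the claim reduces to showing $\det(I - A^U_\infty)\big|_{T = 1} = \pm 1/n$. I would compute this determinant via the simple-multicycle expansion \eqref{eq:cf}.

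At $T = 1$ every crossing transition weight becomes $0$ or $1$, so a walk on $D_0 - U_0$ deterministically follows the orientation. Since $D_0 - U_0$ is a disjoint union of arcs (no closed components), every cycle of $D_\infty$ must pass through the vertex, and each passage $s_i \to t_j$ contributes the weight $a(s_i, t_j)\big|_{T=1} = 1/n$. The external structure of $D_0 - U_0$ is captured by a bijection $\tau : [n] \setminus \{j_0\} \to [n] \setminus \{i_0\}$, where $s_{i_0}$ is the input strand reached by following the orientation from $\mathrm{in}$, $t_{j_0}$ is the output whose deterministic path terminates at $\mathrm{out}$, and $\tau(j)$ is the input reached from $t_j$ for $j \neq j_0$. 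A simple cycle of $D_\infty$ is then uniquely determined by a cyclic sequence $(j_1, \ldots, j_k)$ of distinct indices in $[n] \setminus \{j_0\}$, realized as $s_{\tau(j_k)} \to t_{j_1} \to s_{\tau(j_1)} \to \cdots \to t_{j_k}$, and has weight $(1/n)^k$.

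Since distinct return arcs of $D_0 - U_0$ use disjoint sets of strands, simple multicycles correspond bijectively to pairs $(S, \pi)$ with $S \subseteq [n-1]$ and $\pi$ a permutation of $S$, the cycles of $\pi$ being the cycles of the multicycle. Such a multicycle has $|q| = c(\pi)$ and weight $(1/n)^{|S|}$, so \eqref{eq:cf} gives
$$
\det(I - A^U_\infty)\big|_{T=1} = \sum_{m=0}^{n-1} \binom{n-1}{m} \left(\tfrac{1}{n}\right)^m \sum_{\pi \in S_m} (-1)^{c(\pi)}.
$$
The rising-factorial identity $\sum_{\pi \in S_m} x^{c(\pi)} = x(x+1)\cdots(x+m-1)$ evaluated at $x = -1$ vanishes for $m \geq 2$, equals $-1$ for $m = 1$, and equals $1$ for $m = 0$. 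The sum collapses to $1 - (n-1)/n = 1/n$, yielding the desired value $\pm 1/n$. Finally, any Laurent polynomial with integer coefficients takes an integer value at $T = 1$; since $\pm 1/n \notin \mathbb{Z}$ for $n \geq 2$, the limiting rational function cannot be a Laurent polynomial.

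The main obstacle I anticipate is the combinatorial bookkeeping in the middle paragraph---verifying that each cyclic sequence of distinct return indices really does produce a valid simple Markov-chain cycle of the claimed form and weight, and that disjoint index sets give disjoint multicycles. The algebraic collapse at the end is routine once this parameterization is pinned down.
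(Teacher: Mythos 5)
Your proof is correct, and it takes a genuinely different route from the paper's. The paper evaluates $\det(I - A_\infty)|_{T=1}$ \emph{geometrically}: it observes that at $T=1$ crossing switches and reorderings of the infinite-twist-vertex legs are cost-free, performs Reidemeister moves to reduce $D_\infty$ to a canonical crossingless diagram (Figure~\ref{fig:crossingless_d_inf}) whose transition matrix is explicit, and finishes with the characteristic polynomial of the all-ones matrix $\mathbf{1}_{n-1}$. You instead compute the same determinant \emph{combinatorially}, staying on the original diagram and applying the Cartier--Foata expansion~\eqref{eq:cf} directly. Your key observation---that at $T=1$ the chain outside the vertex is deterministic, that every cycle must pass through the vertex with each passage costing $1/n$, and that simple multicycles are therefore parameterized by a subset of the $n-1$ return arcs together with a permutation of that subset---reduces everything to the rising-factorial identity $\sum_{\pi\in S_m}x^{c(\pi)}=x(x+1)\cdots(x+m-1)$, which vanishes at $x=-1$ for $m\ge 2$. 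This avoids any appeal to Reidemeister invariance or to a preferred planar form of $D_\infty$; the trade-off is that one must carefully argue that distinct return arcs visit disjoint sets of strand-states (true, since each strand segment belongs to exactly one arc, even when arcs cross), which is exactly the bookkeeping you flag. One small remark: the prefactor $T^{(-\varphi(D_0)-w(D_0))/2}$ evaluates to exactly $1$ at $T=1$ (the exponent is an integer since $\varphi+w$ is even, by \eqref{eq:alex}), so your computation actually pins down the sign to $+1/n$; the paper's $\pm$ reflects the sign ambiguities incurred during its diagrammatic simplification, which your argument sidesteps.
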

	
	The above limit of Alexander polynomials for the $(2,2t + 1)$-torus knots demonstrates Lemma \ref{thm:alex_limit}. Though \Cref{thm:alex_limit} can be proven using classical methods, we give a proof using infinite twist vertices---this technique will be echoed in the proof of Proposition \ref{thm:rho_limit} below.
	
	\begin{proof}
		Let $D_t$ be a family of compatible diagrams for the $K_t$, and let $D_\infty \subset I^2$ be as in Definition \ref{def:spec_diags}. We may think of $D_\infty$ as a planar diagram of a directed graph properly embedded in the unit cube---this graph has one interior vertex $v$, the infinite twist vertex, with $n - 1$ loops attached plus one additional incoming edge and one outgoing one. Let $A_\infty$ be a transition matrix for $D_\infty$; then by \Cref{cor:pre_lim_alex} we have
		$$
		(\lim_{t \to \infty} \Delta_{K_t})|_{T = 1} = \pm\det(I - A_\infty|_{T = 1}).
		$$
		
		When $T = 1$ in the Markov chain, particles traversing $D_\infty$ never ``jump down'' when passing over crossings. Equivalently, an undercrossing can be switched to an overcrossing without changing the matrix $A_\infty|_{T = 1}$. After performing a sequence of such crossing switches, we may assume that each loop and edge attached to $v$ in $D_\infty$ is unknotted from itself and from the other loops and edges. Additionally, being a limit of Alexander polynomials, $\det(I - A_\infty)$ is unchanged by Reidemeister moves on $D_\infty$. Moreover, from Definition \ref{def:twist_vert} we see that when $T = 1$, every nonzero weight associated with the infinite twist vertex is $1/n$. It follows that incoming or outgoing strands of the infinite twist vertex can be reordered without changing $A_\infty|_{T = 1}$. After performing a sequence of such reorderings and Reidemeister moves, we assume our diagram $D_\infty$ has no crossings as in Figure \ref{fig:crossingless_d_inf}. At $T = 1$ this diagram has transition matrix
		$$
		A|_{T = 1} = \frac{1}{n} \begin{bmatrix}
			\vdots & {\bf 1}_n \\
			0 & \cdots \\
		\end{bmatrix},
		$$
		where ${\bf 1}_n$ is the $n$-by-$n$ constant matrix of all $1$s, and the vertical and horizontal dots indicate a column and a row of $0$s respectively. Thus
		\begin{align*}
		\det (I - A|_{T = 1}) &= \det \begin{bmatrix}
			1 & -\frac{1}{n} & -\frac{1}{n} & \cdots & -\frac{1}{n} \\
			0 & \frac{n - 1}{n} & -\frac{1}{n} & \cdots & -\frac{1}{n} \\
			0 & -\frac{1}{n} & \frac{n - 1}{n} & \cdots & -\frac{1}{n} \\
			\vdots & \vdots & \vdots & \ddots & \vdots \\
			0 & 0 & 0 & \cdots & 1
		\end{bmatrix} \\
		&= \pm\frac{1}{n^{n - 1}} \det \begin{bmatrix}
			1 - n & 1 & \cdots & 1 \\
			1 & 1 - n & \cdots & 1 \\
			\vdots & \vdots & \ddots & \vdots \\
			1 & 1& \cdots &  1 - n
		\end{bmatrix}.
		\end{align*}
		The third matrix above is the result of removing the first and last row and column from the second matrix and factoring out $-1/n$ from each column---it is an $(n - 1)$-by-$(n - 1)$ matrix with $1 - n$ on the main diagonal and $1$ elsewhere. Denote this matrix by $B$. The determinant of $B$ is, up to a sign, the characteristic polynomial $p_{{\bf 1}_{n -1}}(\lambda)$ of the $(n - 1)$-by-$(n - 1)$ matrix ${\bf 1}_{n - 1}$ evaluated at $\lambda = n$. The matrix ${\bf 1}_{n - 1}$ has two eigenvalues, $0$ and $n - 1$, with multiplicity $n - 2$ and $1$ respectively. Thus
		$$
		p_{{\bf 1}_{n -1}}(\lambda) = \lambda^{n - 2}(\lambda - n + 1)
		$$
		and
		$$
		(\lim_{t \to \infty} \Delta_{K_t})|_{T = 1} = \pm\frac{1}{n^{n - 1}} \det B = \pm\frac{p_{{\bf 1}_{n-1}}(n)}{n^{n - 1}} = \pm\frac{1}{n}
		$$
		as desired.
	\end{proof}
	
	\begin{figure}[t]
		\includegraphics[height=4cm]{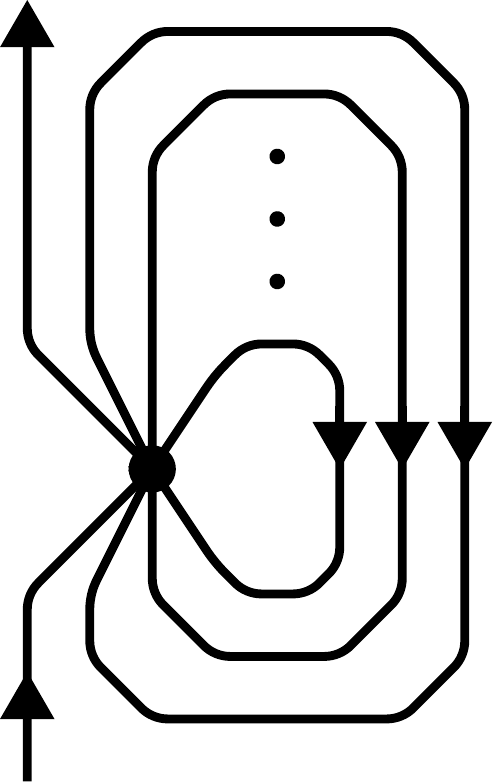}
		\caption{A crossingless diagram with an infinite twist vertex}
		\label{fig:crossingless_d_inf}
	\end{figure}

	\section{Stabilization of the $\rho_1$ growth rate}
	\label{sec:rho}

	We are now prepared to prove Theorems \ref{thm:main} and \ref{thm:example}---the arguments here are technical, but are essentially more nuanced versions of those in the previous section. We first recall that, for any matrix $A \in \text{M}_n(\Z(T))$, the $i$-$j$ {\em cofactor} of $A$ is $(-1)^{i + j}$ times the determinant of the matrix formed by removing the $i$th row and $j$th column from $A$. The {\em cofactor matrix} of $A$ is the matrix whose $i$-$j$ entry is the $i$-$j$ cofactor of $A$, and the {\em adjugate matrix} adj$(A)$ is the transpose of the cofactor matrix of $A$. It is a classical fact that if $\det(A) \neq 0$, then adj$(A)$ satisfies
	$$
	\text{adj}(A) = \det(A) \cdot A^{-1}.
	$$

	To avoid working with rational functions, we find it useful to redefine the invariant $\rho_1$ using adjugate matrices. To this end let $D$ be a long knot diagram of a knot $K \subset S^3$, with crossings $\mathcal{C}$ and states $\mathcal{S}$. Let $A$ be a transition matrix for $D$, let
	$$
	G = (g_{ij}) = (I - A)^{-1}
	$$
	be the Green's matrix, and let $\widetilde{G}$ be the adjugate matrix
	$$
	\widetilde{G} = (\tilde{g}_{ij}) = \text{adj}(I - A) = \det(I - A) G.
	$$
	
	As in Section \ref{sec:ud} we write a crossing $c \in \mathcal{C}$ as a triple $(\sigma, i, j)$, where $\sigma = \pm1$ is the sign of $c$ and $i$ and $j$ are the respective labels of the incoming over- and under-strands. We also denote the label of the outoing under-strand of $c$ by $j^+$. For such a crossing $c$ we define
	\begin{align}
		\widetilde{R}_1(c) &= \det(I - A)^2 R_1(c) \nonumber \\
		\label{eq:new_r_adj}
		&= \sigma(\tilde{g}_{ji}(\tilde{g}_{j^+, j} + \tilde{g}_{j, j^+} - \tilde{g}_{ij}) - \tilde{g}_{ii}(\tilde{g}_{j,j^+} - \det(I - A)) - \det(I - A)^2/2).
	\end{align}
	
	For $k \in \N$ indexing a state $s_k \in \mathcal{S}$, we also define
	\begin{equation}
		\label{eq:newphi}
		\tilde{\varphi}(s_k) = \tilde{\varphi}(k) =  \det(I - A)^2 \varphi_k (g_{kk} - 1/2) = \det(I - A) \varphi_k \tilde{g}_{kk} - \varphi_k\det(I - A)^2/2.
	\end{equation}
	Here, as in Section \ref{sec:ud}, $\varphi_k$ is the turning number of strand $k$. By (\ref{eq:alex}) and Definition \ref{def:rho},
	\begin{equation}
		\label{eq:new_rho}
		\rho_1(K) = T^{-\varphi(D) - w(D)} \Big( \sum_{c \in \mathcal{C}} \tilde{R}_1(c) - \sum_{s_k \in \mathcal{S}} \tilde{\varphi}(k) \Big).
	\end{equation}

	A key point here is that each term in (\ref{eq:new_r_adj}) is a polynomial expression in the entries of $A$---this will allow us to apply the continuity results of \Cref{lem:stab_cont}.
	\begin{lemma}
		\label{lem:polynomial}
		For every positive $m$, any choice of indices $1 \leq i, j, j^+ \leq m$ and $\sigma \in \{-1,1\}$, there exists a unique polynomial function $f_{m,\sigma,i,j,j^+}$ in the entries of $m$-by-$m$ matrices such that if $A \in M_m(\Z(T))$ is a transition matrix for a tangle Markov chain and $c = (\sigma, i, j, j^+)$ is a crossing of the tangle, then
		$$
		f_{m,\sigma,i,j,j^+}(A) = \widetilde{R}_1(c).
		$$
		The function $f$ is defined explicitly by (\ref{eq:new_r_adj}).
	\end{lemma}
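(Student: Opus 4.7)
The plan is to define $f_{m,\sigma,i,j,j^+}$ to be literally the right-hand side of (\ref{eq:new_r_adj}), viewed as a formal expression in the $m^2$ entries of a generic $m\times m$ matrix, and then verify this gives a polynomial function with the claimed properties. There is essentially no geometric or combinatorial content to the lemma --- it is a bookkeeping statement that repackages $\widetilde R_1$ so that it can be fed into the continuity machinery of \Cref{lem:stab_cont} in \Cref{sec:rho}.

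The key input is the standard fact that for any $m\times m$ matrix $M=(m_{k\ell})$, both $\det(M)$ (via the Leibniz formula) and every entry $(\mathrm{adj}(M))_{k\ell}$ (via cofactor expansion as a signed $(m{-}1)\times(m{-}1)$ minor) are polynomials in the entries $m_{k\ell}$, of degrees $m$ and $m-1$ respectively. First I would apply this with $M=I-A$: since the entries of $I-A$ are affine functions of the $a_{k\ell}$, it follows that $\det(I-A)$ and every $\tilde g_{k\ell}=(\mathrm{adj}(I-A))_{k\ell}$ is a polynomial in the entries of $A$.

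Next I would inspect (\ref{eq:new_r_adj}) term by term. Each summand is built from the quantities $\tilde g_{\cdot\cdot}$ and $\det(I-A)$ using only addition, subtraction, and multiplication, with scalar coefficients drawn from $\{\pm 1,\pm 1/2\}$. So the formula defines a polynomial function on $M_m$ with coefficients in $\Z[1/2]$, depending only on $m$, $\sigma$, $i$, $j$, $j^+$ and not on any tangle data. I take this polynomial as $f_{m,\sigma,i,j,j^+}$; its value on a transition matrix $A$ arising from a tangle with crossing $c=(\sigma,i,j,j^+)$ is $\widetilde R_1(c)$ by the very definition (\ref{eq:new_r_adj}). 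Uniqueness is then immediate: a polynomial function on $M_m$ is determined by its polynomial expression in the $m^2$ coordinate entries, and our formula fixes this expression unambiguously.

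The main ``obstacle'' is really only the mild subtlety of the $1/2$, which forces $f_{m,\sigma,i,j,j^+}$ to have coefficients in $\Z[1/2]$ rather than $\Z$; this is compatible with the statement, which only demands a polynomial function of the matrix entries. One should also note that uniqueness is meant in the formal polynomial sense, not in the sense of ``unique polynomial agreeing with $\widetilde R_1$ on transition matrices'' (which would be false, since the set of such matrices is a small subset of $M_m$). With that understood, the lemma follows at once from the explicit formula.
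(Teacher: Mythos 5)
Your proof is correct and matches the (implicit) argument in the paper, which states \Cref{lem:polynomial} without proof and treats it as an immediate consequence of the fact that $\det(I-A)$ and the entries of $\mathrm{adj}(I-A)$ are polynomials in the entries of $A$. Your write-up spells out exactly that reasoning and also flags two points the paper leaves tacit --- that the coefficients live in $\Z[1/2]$ rather than $\Z$, and that uniqueness must be read as uniqueness of the formal polynomial expression supplied by (\ref{eq:new_r_adj}) rather than uniqueness among polynomials agreeing with $\widetilde{R}_1$ on the (thin) set of actual transition matrices; both clarifications are accurate and worth making explicit.
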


	We fix some notation for the rest of the section. Let $\{K_t\}_{t \in \N}$ be a family of knots twisted around $n$ coherently oriented strands, let $D_t$ be a compatible family of diagrams as in \Cref{def:compat}, and let $U_t$ be the twisted region of $D_t$ for all $t$. Let $A_t$ be a transition matrix for $D_t$, $G_t$ the corresponding Green's matrix, and $\widetilde{G}_t = \text{adj}(I - A_t)$.
	
	Additionally, suppose $t \in \N$ is fixed and we are given a second index $m \in \N$ with $t > 2m > 0$. Then we define two subregions of $U_t \subset D_t$ as follows: Let $U_{t,m}^\text{ends}$ be the union of the first $m$ full twists in $U_t$ and the last $m$ full twists, and let $U_{t,m}^\text{mid} = U_t - U_{t,m}^{\text{ends}}$. In this case we refer to $m$ as the {\em ends index} of $D_t$. We also consider $D^\tau_\infty$ and $\tau_\infty$ as in \Cref{def:spec_diags}: let $A_\infty^\tau$ be a transition matrix for $D_\infty^\tau$ and let
	$$
	\widetilde{G}_\infty^\tau =  \text{Adj}(I - A_\infty^\tau).
	$$
	
	Finally, given any full twist $\tau$ in a tangle diagram, let $c_i(\tau)$ be the $i$th crossing of $\tau$ ordered from bottom to top. For any crossing $c_i(\tau_\infty)$ of $\tau_\infty \subset D^\tau_\infty$, we define $\tilde{R}_1(c_i(\tau_\infty))$ in the obvious way using $A^\tau_\infty$ and (\ref{eq:new_r_adj}). When a computation of some $\widetilde{R}_1(c_i(\tau))$ involves an infinite twist vertex, we use the power series expansion
	\begin{equation}
		\label{eq:pow}
		\frac{T^{j - 1}}{1 + T + \cdots + T^{n - 1}} = \frac{T^{j - 1}(1 - T)}{1 - T^n} = T^{j - 1}(1 - T)\sum_{k = 0}^\infty T^{nk}
	\end{equation}
	for the non-zero weights associated to the vertex in Definition \ref{def:twist_vert}. It thus makes sense to consider the coefficient of $T^r$ in $\tilde{R}_1(c_i(\tau))$, which we denote by
	$$
	\big(\tilde{R}_1(c_i(\tau))\big)_r
	$$
	as in Section \ref{sec:ab}.
	
	Before proving our main result, we have the following lemma.
	\begin{lemma}
		\label{lem:tech}
		For all $r_0 \in \Z$ there exists $m > 0$ such that for any $t > 2m$, any full twist $\tau$ in $U_{t,m}^\text{mid} \subset D_t$ and any crossing $c_k(\tau)$ in $\tau$, we have
		$$
		\big(\widetilde{R}_1(c_k(\tau))\big)_r = \big(\widetilde{R}_1(c_k(\tau_\infty))\big)_r.
		$$
		for all $r \leq r_0$.
	\end{lemma}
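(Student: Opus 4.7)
The plan is to reduce both $\widetilde{R}_1(c_k(\tau))$ and $\widetilde{R}_1(c_k(\tau_\infty))$ to the same polynomial expression evaluated on two transition matrices of identical size, and then to exploit the positive stabilization of the Burau matrices of full twists. Decompose $U_t = \Omega_n^a \cup \tau \cup \Omega_n^b$, where $a, b \geq m$ count the full twists below and above $\tau$, and set $W = D_t - U_t$. The subdiagrams $W, \Omega_n^a, \Omega_n^b$ all admit no cycles (each being a subtangle of a long knot diagram), so by Propositions \ref{thm:cmc} and \ref{thm:cmc_det} we may successively contract $D_t$ by these three pieces to obtain a Markov chain $M_{a,b}$ whose state space is $\mathcal{S}(\tau) \cup (\text{top of } U_t) \cup (\text{bottom of } U_t)$. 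The transition matrix $A_{M_{a,b}}$ has three kinds of entries: the entries within $\tau$ (original Laurent polynomials, independent of $a, b$); the entries through $W$ (Green's-function values of $W$, also independent of $a, b$); and the entries through $\Omega_n^a$ and $\Omega_n^b$, which by (\ref{eq:braid_prob}) are precisely the Burau matrix entries $\psi(\Omega_n^a)_{ij}$ and $\psi(\Omega_n^b)_{ij}$. Performing the analogous contraction on $D^\tau_\infty$ yields a Markov chain $M_\infty$ with an identical state space and structure, whose transition matrix $A_{M_\infty}$ differs from $A_{M_{a,b}}$ only by having the infinite twist vertex weights from Definition \ref{def:twist_vert} (expanded via (\ref{eq:pow})) in place of the Burau entries.

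Since the contractions preserve both $\det(I - A)$ and the entries of $\widetilde{G}$ indexed by $\mathcal{S}(\tau)$, Lemma \ref{lem:polynomial} furnishes a single polynomial $f$ such that $\widetilde{R}_1(c_k(\tau)) = f(A_{M_{a,b}})$ and $\widetilde{R}_1(c_k(\tau_\infty)) = f(A_{M_\infty})$. By Proposition \ref{thm:twist_stab}, as $a, b \to \infty$ the Burau entries positively stabilize to the infinite twist vertex weights; the explicit formula in Lemma \ref{thm:twist_pow_q} shows that the coefficients of $\psi(\Omega_n^k)_{ij}$ agree with those of $\psi(\Omega_n^\infty)_{ij}$ through degree roughly $nk$. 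Hence the entries of $A_{M_{a,b}}$ positively stabilize to those of $A_{M_\infty}$ at a rate depending only on $\min(a,b) \geq m$, and applying Lemma \ref{lem:stab_cont} to the fixed polynomial $f$ shows that $\widetilde{R}_1(c_k(\tau))$ positively stabilizes to $\widetilde{R}_1(c_k(\tau_\infty))$. Given $r_0$, one then chooses $m$ large enough that the first $r_0$ coefficients agree; uniformity across the $n(n-1)$ crossings $c_k(\tau)$ of a single full twist and across all $\tau \in U_{t,m}^{\text{mid}}$ is automatic once the maximum required $m$ is taken.

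The main obstacle I anticipate is administrative: one must carefully verify that the two contractions produce Markov chains with matching state labels so that the identification of $f$ across $M_{a,b}$ and $M_\infty$ is legitimate, and that the fact that some entries (the through-$W$ values and, after the power series expansion (\ref{eq:pow}), the infinite twist vertex weights) are rational functions rather than Laurent polynomials does not disturb the application of Lemma \ref{lem:stab_cont}. The latter is unproblematic because these entries are identical in the two matrices and have bounded-below order as formal series, so Lemma \ref{lem:stab_cont} continues to apply termwise.
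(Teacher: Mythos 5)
Your overall strategy is the right one—contract the full-twist stacks flanking $\tau$, identify states with those of $D^\tau_\infty$, invoke Lemma \ref{lem:polynomial} to get a single polynomial expression, and push the coefficient-wise agreement of the Burau matrices through that polynomial via Lemma \ref{lem:stab_cont}. This is essentially the paper's argument. However, there is a concrete gap in your justification for one of the three contractions.

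You assert that $W = D_t - U_t$ admits no cycles ``each being a subtangle of a long knot diagram.'' This is false. A long knot diagram, and hence a generic subtangle of one, certainly \emph{does} admit cycles in the sense of Section \ref{sec:hikes}: a walk can return to its starting strand by jumping down at a crossing whose under-strand has a smaller label than its over-strand, then proceeding forward again. The paper itself points this out (``since $D$ has no closed components, any cycle contains at least one `jump' at a crossing'')—cycles exist, they merely have weight zero when $T = 1$. Only braids are guaranteed cycle-free, because a particle in a braid always moves forward. Since $W$ is not a braid, Proposition \ref{thm:cmc_det} does not apply to the contraction $D_t/W$, and without it you cannot conclude $\det(I - A_{M_{a,b}}) = \det(I - A_{D_t})$. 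That equality is needed for your claim that $f(A_{M_{a,b}}) = \widetilde{R}_1(c_k(\tau))$, because $\widetilde{R}_1$ is built from the adjugate $\widetilde{G} = \det(I - A)\cdot G$: Proposition \ref{thm:cmc} alone preserves the Green's-function entries but not the determinant factor.

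The fix is simply to \emph{not} contract $W$. Contract only $U_t - \tau = \Omega_n^a \cup \Omega_n^b$, which consists of braids and is genuinely cycle-free. Since $D_t - U_t$ is identical to $D_0 - U_0$ for all $t$, the resulting Markov chain $D_t/(U_t - \tau)$ still has a state space naturally identified with that of $D^\tau_\infty$, the through-$W$ entries of the transition matrices are literally equal (not merely stabilizing), and the remainder of your argument goes through unchanged. This is exactly what the paper does.
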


	Informally the lemma says that if $U_t$ contains a sufficiently high number of twists, then full twists near the middle of $U_t$ behave like $\tau_\infty$ in the computation of $\rho_1$.
	
	\begin{proof}
		Fix $r_0 \in \Z$ as in the statement of the lemma and let $r_1 > 0$ be arbitrary. As in Section \ref{sec:burau}, let $\psi(\Omega^t_n)_{ij}$ denote the probability of entering a sequence of $t$ full twists on $n$ strands at the $i$th incoming strand and exiting at the $j$th outgoing one. By Proposition \ref{thm:twist_stab}, The sequence $\{\psi(\Omega^t_n)_{ij}\}_t$ stabilizes positively to the series
		$$
		\lim_{t \to \infty} \psi(\Omega^t_n)_{ij} = T^{j - 1}(1 - T)\sum_{k = 0}^\infty T^{nk}
		$$
		for all $i$ and $j$. Thus we may choose $m \in \N$ such that for all $M \geq m$, all $i$ and $j$ and all $r \leq r_1$,
		$$
		(\psi(\Omega^M_n)_{ij})_r = (T^{j - 1}(1 - T)\sum_{k = 0}^\infty T^{nk})_r.
		$$
		
		We now fix a diagram $D_t$ with $t > 2m$. Let $\tau$ be an arbitrary full twist contained in $U^\text{mid}_{t,m}$, and let $D^\tau_t$ be the Markov chain in which we contract every twist in $U_t$ other than $\tau$:
		$$
		D^\tau_t = D_t/(U_t - \tau).
		$$
		The non-contracted part of $D_t$ in the above chain consists of $D_t - U_t$ and the lone twist region $\tau$. We thus have a natural identification of states  $\mathcal{S}(D^\tau_t) \leftrightarrow \mathcal{S}(D^\tau_\infty)$ given by identifying $D_t - U_t$ with $D_0 - U_0$ and $\tau$ with $\tau_\infty$. We fix the same ordering for both sets of states, and let $A^\tau_t$ be the resulting transition matrix for $D^\tau_t$.
		
		If $s_i$ and $s_j$ are two states in $\mathcal{S}(D^\tau_t)$ which are not both adjacent to a contracted region, then
		$$
		(A^\tau_t)_{ij} = (A^\tau_\infty)_{ij}
		$$
		since the diagrams are identical there. If $s_i$ is the $i$th incoming strand of a contracted region and $s_j$ is the $j$th outgoing strand of the same region, labeling from left to right (and assuming the indices match to simplify notation), then
		$$
		(A^\tau_t)_{ij} = \psi(\Omega^M_n)_{ij},
		$$
		where $M \geq m$ is the number of full twists in the contracted region. On the other hand, by definition,
		$$
		(A^\tau_\infty)_{ij} = \psi(\Omega^\infty_n)_{ij}.
		$$
		It therefore follows from our choice of $m$ that
		\begin{equation}
			\label{eq:astab}
			((A^\tau_t)_{ij})_r = ((A^\tau_\infty)_{ij})_r
		\end{equation}
		for all fixed $i$ and $j$ and all $r \leq r_1$. We emphasize that (\ref{eq:astab}) holds for any $t > 2m$ and any choice of full twist $\tau$ in $U^\text{mid}_{t,m}$.
		
		Next, $G^\tau_t = (I - A^\tau_t)^{-1}$ and $\widetilde{G}^\tau_t = \text{adj}(I - A^\tau_t)$. We have a natural inclusion $\mathcal{S}(D^\tau_t) \hookrightarrow \mathcal{S}(D_t)$ induced by the inclusion $D_t - (U_t - \tau) \hookrightarrow D_t$. To simplify notation, we index the states of $\mathcal{S}(D_t)$ by extending our indexing of $\mathcal{S}(D^\tau_t)$ via this inclusion. By Proposition \ref{thm:cmc} and Corollary \ref{cor:g_matrix} we have
		$$
		(G^\tau_t)_{ij} = (G_t)_{ij}
		$$
		for all $i$ and $j$ indexing states of $\mathcal{S}(D^\tau_t)$. Further, since $U - \tau$ does not admit cycles, Proposition \ref{thm:cmc_det} tells us
		\begin{equation}
			\label{eq:deteq}
			\det(I - A^\tau_t) = \det(I - A_t).
		\end{equation}
		It follows that for all valid $i$ and $j$,
		\begin{equation}
			\label{eq:geq}
			(\widetilde{G}^\tau_t)_{ij} = \det(I - A^\tau_t)(G^\tau_t)_{ij} = \det(I - A_t) (G_t)_{ij} = (\widetilde{G}_t)_{ij}.
		\end{equation}
	
		Let $c_k(\tau)$ be an aribitrary crossing of $\tau$ represented by the triple $(\sigma, i, j)$. Substituting (\ref{eq:deteq}) and (\ref{eq:geq}) into (\ref{eq:new_r_adj}), we have
		\begin{align}
			\label{eq:compare}
			\widetilde{R}_1(c_k(\tau)) &= \sigma((\widetilde{G}^\tau_t)_{ji}((\widetilde{G}^\tau_t)_{j^+, j} + (\widetilde{G}^\tau_t)_{j, j^+} \\
			& \ \ \ \ \ - (\widetilde{G}^\tau_t)_{ij}) - (\widetilde{G}^\tau_t)_{ii}((\widetilde{G}^\tau_t)_{j,j^+} - \det(I - A^\tau_t)) - \det(I - A^\tau_t)^2/2). \nonumber
		\end{align}
	
		We compare the terms in the above equation with 
		\begin{align*}
		\widetilde{R}_1(c_k(\tau_\infty)) &= \sigma((\widetilde{G}^\tau_\infty)_{ji}((\widetilde{G}^\tau_\infty)_{j^+, j} + (\widetilde{G}^\tau_\infty)_{j, j^+} \\
		& \ \ \ \ \ - (\widetilde{G}^\tau_\infty)_{ij}) - (\widetilde{G}^\tau_\infty)_{ii}((\widetilde{G}^\tau_\infty)_{j,j^+} - \det(I - A^\tau_\infty)) - \det(I - A^\tau_\infty)^2/2).
		\end{align*}
	
		Following \Cref{lem:polynomial}, since $A^\tau_t$ and $A^\tau_\infty$ have the same dimension and $c_k(\tau)$ and $c_k(\tau_\infty)$ have the same crossing indices, $\widetilde{R}_1(c_k(\tau))$ and $\widetilde{R}_1(c_k(\tau_\infty))$ are given by the same polynomial function on $|\mathcal{S}(D^\tau_\infty)|$-by-$|\mathcal{S}(D^\tau_\infty)|$ matrices. In other words, there exists a polynomial function $f_{|\mathcal{S}(D^\tau_\infty)|, \sigma, i , j, j^+}$ on the entries of $|\mathcal{S}(D^\tau_\infty)|$-by-$|\mathcal{S}(D^\tau_\infty)|$ matrices such that
		$$
		f_{|\mathcal{S}(D^\tau_\infty)|, \sigma, i , j, j^+}(A^\tau_\infty) = \widetilde{R}_1(c_k(\tau_\infty))
		$$
		and
		$$
		f_{|\mathcal{S}(D^\tau_\infty)|, \sigma, i , j, j^+}(A^\tau_t) =  \widetilde{R}_1(c_k(\tau)).
		$$
		
		We observe that every nonzero element of $A^\tau_\infty$ is equal either to $T^{\pm 1}$, $1 - T^{\pm 1}$, or a power series of the form given in (\ref{eq:pow}). In particular, each Laurent series entry of $A^\tau_\infty$ has only finitely many terms where the exponent of $T$ is negative. The proof of \Cref{lem:stab_cont} thus implies that for some choice of $r_1$, any matrix $A$ satisfying
		$$
		(A_{ij})_r = ((A^\tau_\infty)_{ij})_r
		$$
		for all $i$ and $j$ and all $r \leq r_1$ also satisfies
		$$
		(f_{|\mathcal{S}(D^\tau_\infty)|, \sigma, i , j, j^+}(A))_r = (f_{|\mathcal{S}(D^\tau_\infty)|, \sigma, i , j, j^+}(A^\tau_\infty))_r
		$$
		for all $r \leq r_0$. Fixing such an $r_1$ and adjusting $m$ as necessary, we conclude from (\ref{eq:astab}) that
		$$
		\big(\widetilde{R}_1(c_k(\tau))\big)_r = (f_{|\mathcal{S}(D^\tau_\infty)|, \sigma, i , j, j^+}(A^\tau_t))_r = (f_{|\mathcal{S}(D^\tau_\infty)|, \sigma, i , j, j^+}(A^\tau_\infty))_r = \big(\widetilde{R}_1(c_k(\tau_\infty))\big)_r
		$$
		for all $r \leq r_0$. Since $\tau_\infty$ contains finitely many crossings, we can choose an $m$ such that the above holds for all crossings of $\tau_\infty$ simultaneously. This proves the result.
	\end{proof}

	\begin{thm}
		\label{prop:main}
		Define
		$$
		d_t = d_t(\{K_t\}) = T^{tn(n - 1)} \Big( T^{n(n - 1)}\rho_1(K_{t + 1}) - \rho_1(K_t) \Big) \in \Z[T,T^{-1}]
		$$
		for all $t \in \N$. Then the sequence $\{d_t\}$ stabilizes positively with limit
		$$
		\lim_{t \to \infty} d_t = T^{-\varphi(D_0) -w(D_0)} \sum_{k = 1}^{n(n - 1)} \widetilde{R}_1(c_k(\tau_\infty)),
		$$
		where $\tau_\infty$ is the distinguished full twist in $D^\tau_\infty$.
	\end{thm}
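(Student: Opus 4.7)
The plan is to unpack the definition of $d_t$ using (\ref{eq:new_rho}) and then reduce to a direct application of Lemma~\ref{lem:tech} by pairing crossings between $D_t$ and $D_{t+1}$. Since every strand segment interior to the upright braid $U_t$ has turning number zero, we have $\varphi(D_t) = \varphi(D_0)$, and by construction $w(D_t) = w(D_0) + tn(n-1)$. Substituting (\ref{eq:new_rho}) into the definition of $d_t$, the prefactor $T^{-\varphi(D_t)-w(D_t)}$ combines with $T^{tn(n-1)}$ and $T^{(t+1)n(n-1)}$ to yield the clean identity
\[
d_t = T^{-\varphi(D_0) - w(D_0)}\bigl(\Sigma_{t+1} - \Sigma_t\bigr), \qquad \Sigma_t := \sum_{c \in \mathcal{C}(D_t)} \widetilde{R}_1^{(t)}(c) - \sum_{k \in \mathcal{S}(D_t)} \tilde{\varphi}^{(t)}(k),
\]
where $\widetilde{R}_1^{(t)}$ and $\tilde{\varphi}^{(t)}$ denote the quantities (\ref{eq:new_r_adj}) and (\ref{eq:newphi}) computed from $A_t$ and $\widetilde{G}_t$. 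The theorem reduces to showing $\Sigma_{t+1} - \Sigma_t$ positively stabilizes to $\sum_{k=1}^{n(n-1)} \widetilde{R}_1(c_k(\tau_\infty))$.

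I would realize $D_{t+1}$ as $D_t$ with one extra full twist $\tau^*$ inserted in the middle of $U_{t+1}$. This yields natural bijections between $\mathcal{C}(D_t)$ and $\mathcal{C}(D_{t+1}) \setminus \mathcal{C}(\tau^*)$ and between the corresponding state sets (the $2n$ new strands introduced by $\tau^*$ have turning number zero and so drop out of the state sum). For each paired pair $(c,c')$, the key claim to establish is that $\widetilde{R}_1^{(t+1)}(c') - \widetilde{R}_1^{(t)}(c)$ positively stabilizes to zero, and similarly for the turning-number terms. The point is that paired crossings sit at geometrically identical locations inside twist regions whose lengths grow in both directions as $t \to \infty$, so contracting away most of the twist in each diagram produces transition matrices $A_t^\sharp, A_{t+1}^\sharp$ whose entries positively stabilize to a common limit via Proposition~\ref{thm:twist_stab}. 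Then \Cref{lem:polynomial} together with \Cref{lem:stab_cont} transfers this stabilization to the polynomial expressions $\widetilde{R}_1$ and $\tilde{\varphi}$, forcing the paired differences to vanish. For the unpaired crossings of $\tau^*$, the middle placement ensures $\tau^* \subset U^\text{mid}_{t+1,m}$ once $t > 2m$, so Lemma~\ref{lem:tech} applies directly and gives $\widetilde{R}_1^{(t+1)}(c_k(\tau^*)) \to \widetilde{R}_1(c_k(\tau_\infty))$ positively for each $k = 1, \ldots, n(n-1)$. Summing over $k$ yields the claimed limit.

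The main technical obstacle is establishing the positive-stabilization statement above for paired crossings $c$ lying either outside $U_t$ or in a boundary full twist of $U_t$. Lemma~\ref{lem:tech} is proved only for $c \in U^\text{mid}_{t,m}$, by contracting $D_t/(U_t \setminus \tau)$ for an interior full twist $\tau$ and then applying Proposition~\ref{thm:twist_stab} to the resulting infinite-twist contributions. For an exterior $c$ one instead contracts all of $U_t$ into a single infinite twist vertex as in Section~\ref{sec:alex}; for a boundary $c$ one contracts all but the single boundary full twist. In each case the same template applies: verify that the contracted region admits no cycles so Proposition~\ref{thm:cmc_det} preserves $\det(I - A_t)$, and then invoke Proposition~\ref{thm:twist_stab} on the contracted entries. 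The underlying mechanism is identical to Lemma~\ref{lem:tech}, but the bookkeeping needed to line up matrix entries across the pairing of $D_t$ and $D_{t+1}$—and to ensure both paired values stabilize to the \emph{same} limit rather than merely to some limit each—is the most delicate step.
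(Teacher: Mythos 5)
Your proposal follows essentially the same strategy as the paper: reduce via (\ref{eq:new_rho}) and the writhe/rotation bookkeeping, identify the limiting contribution of the middle twists through Lemma~\ref{lem:tech}, and show the remaining contributions cancel by contracting most of the twist region and invoking stabilization of the Burau representation. The paper sidesteps the per-crossing boundary bookkeeping you flag as delicate by contracting $U^\text{mid}_{t,m}$ \emph{once} into an infinite twist vertex, producing a single diagram $D^{\overline{\text{mid}}}_{t,m}$ that keeps every boundary and exterior crossing visible and whose transition matrix stabilizes to that of $D^{\overline{\text{mid}}}_{\infty,m}$; thus $\rho^{\overline{\text{mid}}}_m(K_{t+1})$ and $\rho^{\overline{\text{mid}}}_m(K_t)$ converge to one common limit and their difference vanishes, without needing a separate contraction for each boundary twist.
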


	\begin{proof}
		Given an ends index $m$, for all $t > 2m$ we write $\mathcal{S}_{t,m}^\text{mid}$ to indicate $\mathcal{S}(U^\text{mid}_{t,m})$ and $\mathcal{S}_{t,m}^{\overline{\text{mid}}}$ to denote its complement $\mathcal{S}(D_t) - \mathcal{S}^\text{mid}_{t,m}$. Similarly, we write $\mathcal{C}_{t,m}^\text{mid}$ to mean the set of crossings $\mathcal{C}(U^\text{mid}_{t,m})$, and $\mathcal{C}_{t,m}^{\overline{\text{mid}}}$ for the complementary set $\mathcal{C}(D_t) - \mathcal{C}^\text{mid}_{t,m}$.
		
		We write
		$$
		\rho_1(K_t) = \rho^\text{mid}_m(K_t) + \rho^{\overline{\text{mid}}}_m(K_t),
		$$
		where as in (\ref{eq:new_rho})
		$$
		\rho^\text{mid}_m(K_t) = T^{-\varphi(D_t) -w(D_t)} \Big( \sum_{c \in \mathcal{C}_{t,m}^\text{mid}} \tilde{R}_1(c) - \sum_{k \in \mathcal{S}_{t,m}^\text{mid}} \tilde{\varphi}_k \Big),
		$$
		and $\rho^{\overline{\text{mid}}}_m$ is defined analogously for $\mathcal{C}_{t,m}^{\overline{\text{mid}}}$ and $\mathcal{S}_{t,m}^{\overline{\text{mid}}}$. Then
		\begin{equation}
			\label{eq:setup}
			d_t = T^{tn(n - 1)} \big(T^{n(n - 1)} \rho^\text{mid}_m(K_{t + 1}) -  \rho^\text{mid}_m(K_t)\big) + T^{tn(n - 1)}\big(T^{n(n - 1)} \rho^{\overline{\text{mid}}}_m(K_{t + 1}) -  \rho^{\overline{\text{mid}}}_m(K_t) \big).
		\end{equation}
		Fix $r_0 \in \Z$; we consider the left grouping in (\ref{eq:setup}) first. We observe that all strands in $U^\text{mid}_{t,m}$ for any $t$ and $m$ have zero turning number, so by (\ref{eq:newphi})
		$$
		\rho^\text{mid}_m(K_t) = T^{-\varphi(D_t) -w(D_t)} \sum_{c \in \mathcal{C}_{t,m}^\text{mid}} \tilde{R}_1(c).
		$$
		Additionally, since $\varphi(D_t) = \varphi(D_0)$ for all $t$ and
		$$
		w(D_t) = w(D_0) + tn(n - 1),
		$$
		we have
		\begin{equation}
			\label{eq:first_grp}
			T^{tn(n - 1)} \big(T^{n(n - 1)} \rho^\text{mid}_m(K_{t + 1}) -  \rho^\text{mid}_m(K_t)\big) = T^{-\varphi(D_0) -w(D_0)}\big(\sum_{c \in \mathcal{C}_{t + 1,m}^\text{mid}} \tilde{R}_1(c) - \sum_{c \in \mathcal{C}_{t,m}^\text{mid}} \tilde{R}_1(c)\big).
		\end{equation}
		Using Lemma \ref{lem:tech} we choose an ends index $m$ such that for any $t > 2m$, any full twist $\tau$ in $U^\text{mid}_{t,m}$ and any crossing $c_k(\tau)$,
		$$
		\big(\widetilde{R}_1(c_k(\tau))\big)_r = \big(\widetilde{R}_1(c_k(\tau_\infty))\big)_r
		$$
		for all $r \leq r_0$. This implies
		$$
		\big(\sum_{k = 1}^{n(n-1)} \widetilde{R}_1(c_k(\tau))\big)_r = \big(\sum_{k = 1}^{n(n-1)} \widetilde{R}_1(c_k(\tau_\infty))\big)_r
		$$
		for all $r\leq r_0$, so in fact
		$$
		\big( \sum_{c \in \mathcal{C}_{t,m}^\text{mid}} \tilde{R}_1(c) \big)_r = \big((t - 2m) \cdot \sum_{r = 1}^{n(n-1)} \widetilde{R}_1(c_r(\tau_\infty)) \big)_r
		$$
		for all $r \leq r_0$ and all $t > 2m$. From this and (\ref{eq:first_grp}), we conclude that
		\begin{equation}
			\label{eq:plugin}
			\Big( T^{tn(n - 1)} \big(T^{n(n - 1)} \rho^\text{mid}_m(K_{t + 1}) -  \rho^\text{mid}_m(K_t)\big) \Big)_r = \Big(T^{-\varphi(D_0) -w(D_0)} \sum_{k = 1}^{n(n-1)} \widetilde{R}_1(c_k(\tau_\infty)) \Big)_r
		\end{equation}
		for all $r \leq r_0$ and all $t > 2m$.
		
		We finish the proof of the theorem by showing the second grouping in (\ref{eq:setup}) stabilizes positively to zero. For this, define
		$$
		D^{\overline{\text{mid}}}_{t,m} = D_t/U^\text{mid}_{t,m}.
		$$
		Additionally, let $D^{\overline{\text{mid}}}_{\infty,m}$ be the diagram formed by replacing the region $U^\text{mid}_{t,m}$ with a single infinite twist vertex. We note that the choice of $t$ does not matter for this definition, so long as $t > 2m$. Let $A^{\overline{\text{mid}}}_{t,m}$, $A^{\overline{\text{mid}}}_{\infty,m}$, $G^{\overline{\text{mid}}}_{t,m}$ and $G^{\overline{\text{mid}}}_{\infty,m}$ be transition and Green's matrices for $D^{\overline{\text{mid}}}_{t,m}$ and $D^{\overline{\text{mid}}}_{\infty,m}$ respectively, and let
		\begin{align*}
		\widetilde{G}^{\overline{\text{mid}}}_{t,m} &= \text{adj}(I - A^{\overline{\text{mid}}}_{t,m}) \\
		\widetilde{G}^{\overline{\text{mid}}}_{\infty,m} &= \text{adj}(I - A^{\overline{\text{mid}}}_{\infty,m}).
		\end{align*}
	
		The diagrams $D_t - U^\text{mid}_{t,m}$ are identical for all $t > 2m$, and we identify the sets of states $\mathcal{S}_{t,m}^{\overline{\text{mid}}}$ with each other and with $\mathcal{S}(D^{\overline{\text{mid}}}_{\infty,m})$ in the obvious way. We choose an indexing for this set, and extend this to an indexing on $\mathcal{S}(D_t)$ for each $t$ via the inclusion $D_t - U^\text{mid}_{t,m} \hookrightarrow D_t$. Similarly, we identify the sets of crossings $\mathcal{C}^{\overline{\text{mid}}}_{t,m}$ and $\mathcal{C}(D^{\overline{\text{mid}}}_{\infty,m})$ for all $t$.
		
		As in the proof of Lemma \ref{lem:tech}, since the contracted region $U^\text{mid}_{t,m}$ admits no cycles, Propositons \ref{thm:cmc} and \ref{thm:cmc_det} and Corollary \ref{cor:g_matrix} give
		$$
		\det(I - A_t) = \det(I - A^{\overline{\text{mid}}}_{t,m})
		$$
		and
		$$
		(\widetilde{G}_t)_{ij} = \det(I - A_t)(G_t)_{ij} = \det(I - A^{\overline{\text{mid}}}_{t,m})(G^{\overline{\text{mid}}}_{t,m})_{ij} = (\widetilde{G}^{\overline{\text{mid}}}_{t,m})_{ij}
		$$
		for all $i$ and $j$ indexing states in $\mathcal{S}_{t,m}^{\overline{\text{mid}}}$. The same arguments used in the proofs of \Cref{lem:alex} and Lemma \ref{lem:tech} also show that
		\begin{equation}
			\label{eq:final_stab}
			\lim_{t \to \infty} A^{\overline{\text{mid}}}_{t,m} = A^{\overline{\text{mid}}}_{\infty,m},
		\end{equation}
		which implies 
		$$
		\lim_{t \to \infty} \widetilde{G}^{\overline{\text{mid}}}_{t,m} = \widetilde{G}^{\overline{\text{mid}}}_{\infty,m}
		$$
		by \Cref{lem:stab_cont}.
		
		Given a fixed crossing $c$ of $D_{2m + 1} - U^\text{mid}_{2m + 1,m}$, let $c^t$ denote the corresponding crossing of $\mathcal{C}^{\overline{\text{mid}}}_{t,m}$ and $c^\infty$ the same crossing in $\mathcal{C}(D^{\overline{\text{mid}}}_{\infty,m})$. By the above discussion, for any such $c$ with triple $(\sigma, i,j)$, we have
		\begin{align*}
		\lim_{t \to \infty} \widetilde{R}_1(c^t) &= \lim_{t \to \infty}\sigma((\widetilde{G}_t)_{ji}((\widetilde{G}_t)_{j^+, j} + (\widetilde{G}_t)_{j, j^+} \\
		& \ \ \ \ \ - (\widetilde{G}_t)_{ij}) - (\widetilde{G}_t)_{ii}((\widetilde{G}_t)_{j,j^+} - \det(I - A_t)) - \det(I - A_t)^2/2) \\
		&= \lim_{t \to \infty}\sigma((\widetilde{G}^{\overline{\text{mid}}}_{t,m})_{ji}((\widetilde{G}^{\overline{\text{mid}}}_{t,m})_{j^+, j} + (\widetilde{G}^{\overline{\text{mid}}}_{t,m})_{j, j^+} \\
		& \ \ \ \ \ - (\widetilde{G}^{\overline{\text{mid}}}_{t,m})_{ij}) - (\widetilde{G}^{\overline{\text{mid}}}_{t,m})_{ii}((\widetilde{G}^{\overline{\text{mid}}}_{t,m})_{j,j^+} - \det(I - A^{\overline{\text{mid}}}_{t,m})) - \det(I - A^{\overline{\text{mid}}}_{t,m})^2/2) \\
		&= \sigma((\widetilde{G}^{\overline{\text{mid}}}_{\infty,m})_{ji}((\widetilde{G}^{\overline{\text{mid}}}_{\infty,m})_{j^+, j} + (\widetilde{G}^{\overline{\text{mid}}}_{\infty,m})_{j, j^+} \\
		& \ \ \ \ \ - (\widetilde{G}^{\overline{\text{mid}}}_{\infty,m})_{ij}) - (\widetilde{G}^{\overline{\text{mid}}}_{\infty,m})_{ii}((\widetilde{G}^{\overline{\text{mid}}}_{\infty,m})_{j,j^+} - \det(I - A^{\overline{\text{mid}}}_{\infty,m})) - \det(I - A^{\overline{\text{mid}}}_{\infty,m})^2/2) \\
		&= \tilde{R}_1(c^\infty).
		\end{align*}
		Similarly, if $s^t$ is a fixed state of $D_t - U^\text{mid}_{t,m}$, let $s^\infty$ be the corresponding state of $\mathcal{S}(D^{\overline{\text{mid}}}_{\infty,m})$. Since the turning number $\varphi(s^t)$ does not depend on the value of $t$, a computation analogous to the one above shows
		$$
		\lim_{t \to \infty} \tilde{\varphi}(s^t) = \tilde{\varphi}(s^\infty).
		$$
		Returning to the second grouping of (\ref{eq:setup}), these two computations together imply that
		\begin{align*}
		&\lim_{t \to \infty} T^{n(n - 1)} \rho^{\overline{\text{mid}}}_m(K_{t + 1}) -  \rho^{\overline{\text{mid}}}_m(K_t) \\
		& \ \ \ \ \ = \lim_{t \to \infty}  T^{-w(D_0) - \varphi(D_0)}\big(\sum_{c \in \mathcal{C}_{{t+1},m}^{\overline{\text{mid}}}} \tilde{R}_1(c) - \sum_{k \in \mathcal{S}_{{t+1},m}^{\overline{\text{mid}}}} \tilde{\varphi}(k) - \sum_{c \in \mathcal{C}_{t,m}^{\overline{\text{mid}}}} \tilde{R}_1(c) + \sum_{k \in \mathcal{S}_{t,m}^{\overline{\text{mid}}}} \tilde{\varphi}(k) \big) \\
		& \ \ \ \ \ = T^{-w(D_0) - \varphi(D_0)}\big(\sum_{c \in \mathcal{C}(D^{\overline{\text{mid}}}_{\infty,m})} \tilde{R}_1(c) - \sum_{k \in \mathcal{S}(D^{\overline{\text{mid}}}_{\infty,m})} \tilde{\varphi}(k) - \sum_{c \in \mathcal{C}(D^{\overline{\text{mid}}}_{\infty,m})} \tilde{R}_1(c) + \sum_{k \in \mathcal{S}(D^{\overline{\text{mid}}}_{\infty,m})} \tilde{\varphi}(k) \big) \\
		& \ \ \ \ \ = 0.
		\end{align*}
		Thus we can choose $t_0 > 2m$ such that for all $t > t_0$ and $r \leq r_0$.
		$$
		\big( T^{n(n - 1)} \rho^{\overline{\text{mid}}}_m(K_{t + 1}) -  \rho^{\overline{\text{mid}}}_m(K_t) \big)_r = 0.
		$$
		
		Subsituting (\ref{eq:plugin}) and the above equation into (\ref{eq:setup}), we conclude that for all $t > t_0$ and all $r \leq r_0$,
		$$
		(d_t)_r = \Big(T^{-\varphi(D_0) -w(D_0)} \sum_{k = 1}^{n(n-1)} \widetilde{R}_1(c_k(\tau_\infty)) \Big)_r.
		$$
		Since $r_0$ was arbitrary, $d_t$ stabilizes positively to the desired limit.
	\end{proof}

	It is now straightforward to compute the asymptotic growth rate of $\rho_1$ for any twisted family of knots. As in Section \ref{sec:alex}, we consider the family of torus knots $\{{\bf T}(2,2T+1)\}$. For this family the diagram $D^\tau_\infty$ looks as in Figure \ref{fig:d_tau_inf} with transition matrix given by:

	$$
	A = \begin{bmatrix}
		0 & T & 0 & 0 & 0 & 0 & 1 - T & 0 & 0 & 0 & 0 \\
		0 & 0 & \frac{T}{1 + T} & 0 & 0 & 0 & 0 & \frac{1}{1 + T} & 0 & 0 & 0 \\
		0 & 0 & 0 & 1 & 0 & 0 & 0 & 0 & 0 & 0 & 0 \\
		0 & 0 & 0 & 0 & T & 0 & 0 & 0 & 0 & 1 - T & 0 \\
		0 & 0 & 0 & 0 & 0 & \frac{T}{1 + T} & 0 & 0 & 0 & 0 & \frac{1}{1 + T} \\
		0 & 0 & 0 & 0 & 0 & 0 & 1 & 0 & 0 & 0 & 0 \\
		0 & 0 & \frac{T}{1 + T} & 0 & 0 & 0 & 0 & \frac{1}{1 + T} & 0 & 0 & 0 \\
		0 & 0 & 0 & 1 - T & 0 & 0 & 0 & 0 & T & 0 & 0 \\
		0 & 0 & 0 & 0 & 0 & 0 & 0 & 0 & 0 & 1& 0 \\
		0 & 0 & 0 & 0 & 0 & \frac{T}{1 + T} & 0 & 0 & 0 & 0 & \frac{1}{1 + T} \\
		0 & 0 & 0 & 0 & 0 & 0 & 0 & 0 & 0 & 0 & 0 
	\end{bmatrix}
	$$
	It is trivial to compute $G = (I - A)^{-1}$ and subsequently calulate the following using the formula in Theorem \ref{prop:main}.
	
	\begin{named_thm}{\refthm{example}}
		The asymptotic growth rate of the $\rho_1$ invariant for the family of $(2,q)$-torus knots is equal to
		$$
		-\frac{1}{(1 + T)^2} = -1 + 2T - 3T^2 + 4T^3 - \cdots
		$$
	\end{named_thm}

	One may also notice that the determinant of $I - A$ is the limit of Alexander polynomials $\lim_{t \to \infty} \Delta_{K_t}$ calculated in Section \ref{sec:alex}. This is always the case.

	\begin{lemma}
		\label{lem:final}
		For any coherently oriented family of twisted knots $\{K_t\}$, let $D_\infty$ and $D^\tau_\infty$ be as in \Cref{def:spec_diags} with respective transition matrices $A_\infty$ and $A^\tau_\infty$. Then
		$$
		\det(I - A^\tau_\infty) = \det(I - A_\infty) = T^\alpha \lim_{t \to \infty} \Delta_{K_t}
		$$
		where $\alpha$ is the constant of \Cref{lem:alex}. In particular, it follows from \Cref{thm:alex_limit} that $I - A^\tau_\infty$ is invertible and the Green's matrix $G^\tau_\infty$ is defined.
	\end{lemma}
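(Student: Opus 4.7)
The plan is to derive an analog of \Cref{cor:pre_lim_alex} for the Markov chain $D^\tau_\infty$ and compare it with \Cref{cor:pre_lim_alex} itself. For any integers $t_1, t_2 \geq 0$, let $D^\tau_{t_1,t_2}$ denote a long knot diagram for $K_{t_1+t_2+1}$ obtained from $D_0$ by replacing the twist region $U_0$ with $t_1$ full twists, a single distinguished full twist $\tau$, and then $t_2$ additional full twists. Let $A^\tau_{t_1,t_2}$ be the transition matrix of the Markov chain obtained from $D^\tau_{t_1,t_2}$ by contracting the two outer twist blocks; these are coherently oriented braids, which admit no cycles, so \Cref{thm:cmc_det} applies. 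Since adding $(t_1+t_2+1)n(n-1)$ positive crossings raises the writhe by that amount while leaving the turning number fixed, combining \Cref{thm:cmc_det} with (\ref{eq:alex}) yields
$$
T^{(t_1+t_2+1)n(n-1)/2}\,\Delta_{K_{t_1+t_2+1}}(T) \;=\; T^{-(\varphi(D_0)+w(D_0))/2}\,\det\bigl(I - A^\tau_{t_1,t_2}\bigr).
$$

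Next I would let $t_1, t_2 \to \infty$. By \Cref{thm:twist_stab} together with the identification of infinite twist vertex weights with $\psi(\Omega^\infty_n)$ from \Cref{def:twist_vert}, the entries of $A^\tau_{t_1,t_2}$ corresponding to the two contracted blocks stabilize positively to those of $A^\tau_\infty$, while the remaining entries are already independent of $t_1,t_2$. Hence $\det(I - A^\tau_{t_1,t_2})$ stabilizes positively to $\det(I - A^\tau_\infty)$ by \Cref{lem:stab_cont}. On the left-hand side, the sequence $\{T^{(t_1+t_2+1)n(n-1)/2}\Delta_{K_{t_1+t_2+1}}\}$ is a subsequence of the positively stabilizing sequence $\{T^{tn(n-1)/2}\Delta_{K_t}\}_t$ from \Cref{prop:alex}, so its limit agrees with $\lim_{t\to\infty}T^{tn(n-1)/2}\Delta_{K_t}$. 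Equating the two sides produces
$$
\lim_{t\to\infty}\,T^{tn(n-1)/2}\,\Delta_{K_t} \;=\; T^{-(\varphi(D_0)+w(D_0))/2}\,\det\bigl(I - A^\tau_\infty\bigr).
$$

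Comparing this with \Cref{cor:pre_lim_alex} immediately gives $\det(I - A^\tau_\infty) = \det(I - A_\infty)$, and taking $\alpha = (\varphi(D_0)+w(D_0))/2$ identifies the common value with $T^\alpha\lim_{t\to\infty}\Delta_{K_t}$ under the stabilized normalization (which is indeed an integer exponent because $\varphi + w$ is even for any knot diagram). Invertibility of $I - A^\tau_\infty$ then follows because $\det(I - A^\tau_\infty)$ differs only by the unit $T^\alpha$ from the rational function $\lim_{t\to\infty}T^{tn(n-1)/2}\Delta_{K_t}$, which evaluates to $\pm 1/n \neq 0$ at $T = 1$ by \Cref{thm:alex_limit}.

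I do not expect significant obstacles: the argument closely mirrors the first proof of \Cref{prop:alex}, only with the twist region split into two blocks around a fixed central twist. The only genuinely new wrinkle is interfacing the double-index limit ($t_1, t_2 \to \infty$) with the single-index positive stabilization of $\{T^{tn(n-1)/2}\Delta_{K_t}\}_t$, which is handled simply by restricting to the subsequence $t = t_1+t_2+1$ and appealing to positive stabilization coefficient-by-coefficient.
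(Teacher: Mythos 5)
Your proof is correct, but it takes a genuinely different route from the paper's. The paper argues algebraically: it contracts the entire region $U = (\text{twist vertex}) \cup \tau_\infty \cup (\text{twist vertex})$ inside $D^\tau_\infty$ and then shows directly that the resulting Markov chain equals $D_\infty$, using the matrix identity $\psi(\Omega^\infty_n)\,\psi(\Omega_n)\,\psi(\Omega^\infty_n) = \psi(\Omega^\infty_n)$ (an ``idempotent''-style property of the infinite twist vertex that follows from $\psi(\Omega^\infty_n)$ having constant columns and $\psi(\Omega_n)$ having rows summing to $1$). You instead pass through the finite approximants $D^\tau_{t_1,t_2}$ and take a limit, comparing the resulting formula against \Cref{cor:pre_lim_alex}. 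Your route has the virtue of applying \Cref{thm:cmc_det} only to honest tangle diagrams (the outer blocks of $D^\tau_{t_1,t_2}$ are genuine braids), whereas the paper applies it informally to a Markov chain with infinite twist vertices; on the other hand, the paper's argument is self-contained and avoids the extra bookkeeping of a two-index limit. One thing worth stating explicitly in your write-up: the identity you derive for $\det(I - A^\tau_{t_1,t_2})$ shows the right side depends only on $t_1 + t_2$, so the double limit immediately reduces to a single-index positive stabilization (e.g.\ setting $t_1 = t_2 = s$); otherwise you would need to extend \Cref{def:stab} and \Cref{lem:stab_cont} to doubly indexed sequences, which is not hard but is not in the paper.
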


	\begin{proof}
		The second equality in Lemma \ref{lem:final} is \Cref{cor:pre_lim_alex}; we must prove the first equality.
		
		Let $U \subset D^\tau_\infty$ be the union of the two infinite twist vertices and the full twist $\tau_\infty$, and let $A_{D^\tau_\infty/U}$ be a transition matrix for the contraction $D^\tau_\infty/U$. Since $U$ contains no cycles, the proof of \Cref{thm:cmc_det} gives
		$$
		\det(I - A_{D^\tau_\infty/U}) = \det(I - A^\tau_\infty)
		$$
		and it suffices to show that 
		\begin{equation}
			\label{eq:same}
			\det(I - A_{D^\tau_\infty/U}) = \det(I - A_\infty).
		\end{equation}
		In fact the two Markov chains $D^\tau_\infty/U$ and $D_\infty$ are the same. As in Section \ref{sec:burau}, let $\psi(\Omega^\infty_n)$ be the matrix such that $\psi(\Omega^\infty_n)_{ij}$ is the probability of entering an infinite twist vertex at the $i$th incoming edge and exiting at the $j$ outgoing one. Let $\psi(\Omega_n)$ be the analogous matrix (given by the Burau representation) associated to the full twist $\tau_\infty \subset U$. Using the fact that each column of $\psi(\Omega^\infty_n)$ is constant and each row of $\psi(\Omega_n)$ sums to one, it is easy to check that
		$$
		\psi(\Omega_n) \cdot \psi(\Omega^\infty_n) = \psi(\Omega^\infty_n).
		$$
		In fact
		$$
		\psi(\Omega^\infty_n) \cdot \psi(\Omega_n) \cdot \psi(\Omega^\infty_n) = \psi(\Omega^\infty_n).
		$$
		This makes intuitive sense, since concatenating some full twists with an infinite sequence of twists just yields another infinite sequence of twists. The above equation says that the probabilty of entering $U$ at the $i$th incoming strand and exiting at the $j$th outgoing one is the same as the associated probability for a single infinite twist vertex. It follows from this that if we identify the states $\mathcal{S}(D^\tau_\infty/U)$ and $\mathcal{S}(D_\infty)$ in the obvious way, then we have
		$$
		A_\infty = A_{D^\tau_\infty/U}.
		$$
		This implies equation (\ref{eq:same}).
	\end{proof}

	We use the preceeding lemma to prove a result analogous to \Cref{thm:alex_limit}. For the $\rho_1$ invariant we have the following proposition, which completes the proof of \Cref{thm:main}.
	
	\begin{prop}
		\label{thm:rho_limit}
		Let $\{K_t\}$ be a family of knots twisted along $n$ coherently oriented strands. Then the asymptotic growth rate of the $\rho_1$ invariant, viewed as a rational function, satisfies
		$$
		(\lim_{t \to \infty} d_t) |_{T = 1} = \pm \frac{n - 1}{2n}.
		$$
		In particular, $\lim_{t \to \infty} d_t$ is not a polynomial.
	\end{prop}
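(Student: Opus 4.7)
The plan is to use \Cref{prop:main} to write $(\lim_{t \to \infty} d_t)|_{T = 1}$ in a form amenable to direct evaluation, combining the determinant computation of \Cref{thm:alex_limit} with the probabilistic structure of the Markov chain on $D^\tau_\infty$ at $T = 1$.

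By \Cref{prop:main} and the identity $\widetilde{R}_1(c) = \det(I - A^\tau_\infty)^2 R_1(c)$,
\[
(\lim_{t \to \infty} d_t)|_{T=1} = T^{-\varphi(D_0) - w(D_0)}|_{T=1} \cdot \det(I - A^\tau_\infty)^2|_{T=1} \cdot \sum_{k=1}^{n(n-1)} R_1(c_k(\tau_\infty))|_{T=1}.
\]
The first factor is $1$. By \Cref{lem:final} and \Cref{thm:alex_limit}, $\det(I - A^\tau_\infty)|_{T=1} = \pm 1/n$, so $\det(I - A^\tau_\infty)^2|_{T=1} = 1/n^2$. The proposition then reduces to verifying
\[
\sum_{k=1}^{n(n-1)} R_1(c_k(\tau_\infty))|_{T=1} = \pm\, \frac{n(n-1)}{2},
\]
with the sign chosen to match that of $\det(I - A^\tau_\infty)|_{T=1}$.

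To evaluate this sum I analyze the Markov chain $D^\tau_\infty$ at $T = 1$. At this evaluation every jump weight $1 - T^\sigma$ at a crossing vanishes, so a walker follows its thread deterministically through crossings, while each infinite twist vertex becomes a uniform router sending the walker to each of its $n$ outgoing edges with probability $1/n$. Since a full twist realizes the identity permutation of strands, a walker entering $\tau_\infty$ at the $i$th lower strand $u_i$ exits at $v_i$, passes through the upper vertex and is uniformly redistributed, traverses $D_0 - U_0$ deterministically, and re-enters $\tau_\infty$ at a uniformly selected lower strand. The whole system inherits an $S_n$ symmetry cyclically permuting the threads of $\tau_\infty$, and each Green's function value $g_{ii}, g_{ij}, g_{ji}, g_{j,j^+}, g_{j^+,j}$ entering $R_1(c)|_{T=1}$ can be written in closed form as a rational function of $1/n$ via standard geometric-series return-probability identities.

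The core of the proof is then a direct algebraic verification that the non-constant part $g_{ji}(g_{j^+, j} + g_{j, j^+} - g_{ij}) - g_{ii}(g_{j, j^+} - 1)$ of $R_1(c)|_{T=1}$ sums to zero over the $n(n-1)$ crossings of $\tau_\infty$. If so, only the per-crossing constant $-1/2$ contributes, yielding the desired total $\mp n(n-1)/2$. For the non-polynomiality statement, since each $d_t$ is a Laurent polynomial with integer coefficients and these coefficients stabilize, the limit is a Laurent series with integer coefficients; a Laurent polynomial with integer coefficients evaluated at $T = 1$ is an integer, contradicting $(n-1)/(2n) \notin \Z$ for $n \geq 2$. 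The main obstacle is the explicit cancellation identity, which---even with the $S_n$ symmetry reducing to a single orbit representative per crossing type---requires careful tracking of how the walker transitions among the numerous internal strands of $\tau_\infty$, whose labels depend on the exact crossing pattern of $(\sigma_1 \sigma_2 \cdots \sigma_{n-1})^n$.
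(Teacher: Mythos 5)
Your setup is identical to the paper's: you invoke Proposition~\ref{prop:main} together with Lemma~\ref{lem:final} and Lemma~\ref{thm:alex_limit} to reduce the claim to showing
\[
\sum_{k=1}^{n(n-1)} R_1(c_k(\tau_\infty))\big|_{T=1} = \pm \frac{n(n-1)}{2},
\]
and you correctly identify that the heart of the matter is the vanishing of the quadratic terms of $R_1$ at $T=1$, leaving only the $-1/2$ per crossing. Your non-polynomiality argument (integer coefficients of the $d_t$ stabilize, so the limit has integer coefficients, so its value at $T=1$ would be an integer) is valid and in fact more explicit than what the paper says.

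However, you explicitly flag the required cancellation identity as ``the main obstacle'' you have not resolved, and that obstacle is the substance of the proof, so this is a genuine gap. Moreover, the specific difficulty you anticipate---that the internal strand labels of $\tau_\infty$ ``depend on the exact crossing pattern of $(\sigma_1\sigma_2\cdots\sigma_{n-1})^n$''---dissolves under two observations made in the paper. First, the value $\sum_k R_1(c_k(\tau_\infty))|_{T=1}$ is unchanged by crossing switches, Reidemeister moves, and reorderings of the edges incident to the infinite twist vertices performed \emph{away from} $\tau_\infty$ (this is the same argument used in Lemma~\ref{thm:alex_limit}); one can therefore replace $D_0 - U_0$ by $n-1$ unknotted, unlinked loops joining the top vertex to the bottom vertex, plus the global incoming and outgoing arcs, as in Figure~\ref{fig:rho_lim_one}. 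Second, in that simplified chain at $T=1$, the walker moves deterministically up its braid component of $\tau_\infty$ and the crossing pattern plays no role at all; counting walks through the two infinite twist vertices gives the uniform closed form
\[
g(s^a_b, s^c_d)\big|_{T=1} = \varepsilon^{a,b}_{c,d} + \frac{n-1}{n}, \qquad
\varepsilon^{a,b}_{c,d} = \begin{cases} 1 & a=c,\ b \le d, \\ 0 & \text{otherwise.}\end{cases}
\]
Substituting into the expression for $R_1(c)|_{T=1}$ one finds, with $\alpha = (n-1)/n$,
\[
R_1(c)\big|_{T=1} = \alpha\bigl(\alpha + (1+\alpha) - \alpha\bigr) - (1+\alpha)\bigl((1+\alpha)-1\bigr) - \tfrac12 = -\tfrac12
\]
for \emph{each} crossing individually, not merely in the sum. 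So the cancellation you were hoping to establish is both cleaner and stronger than you proposed, and it does not require the $S_n$-orbit bookkeeping you sketch---but until it is actually carried out, your argument is incomplete.
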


	\begin{proof}
		Let $D^\tau_\infty$ and $\tau_\infty \subset D^\tau_\infty$ be defined as usual for some compatible family of diagrams for the $\{K_t\}$. By Proposition \ref{prop:main}, Lemma \ref{lem:final} and Lemma \ref{thm:alex_limit} we have
		\begin{align}
			\label{eq:okay}
			(\lim_{t \to \infty} d_t) |_{T = 1} &= \pm \sum_{k = 1}^{n(n - 1)} \widetilde{R}_1(c_k(\tau_\infty))|_{T = 1} \\ \nonumber
			&= \pm \det(I - A^\tau_\infty)^2|_{T = 1} \sum_{k = 1}^{n(n - 1)} R_1(c_k(\tau_\infty))|_{T = 1} \\ \nonumber
			&= \pm \frac{1}{n^2} \sum_{k = 1}^{n(n - 1)} R_1(c_k(\tau_\infty))|_{T = 1}.
		\end{align}
		The same argument in the proof of Lemma \ref{thm:alex_limit} shows the righthand value above is unchanged by crossing changes, Reidemeister moves and reorderings of the incoming or outgoing edges of infinite twist vertices that take place away from the twist region $\tau_\infty$. It thus suffices to verify the lemma for the diagram $D$ in Figure \ref{fig:rho_lim_one}, where the labeled box is $\tau_\infty$ and the two vertices are infinite twist vertices. 
		
		Each braid component of $\tau_\infty$ passes through $2n - 2$ crossing points, so each braid component of $\tau_\infty$ contributes $2n - 1$ states to the Markov chain. We order the braid components of $\tau_\infty$ from left to right, and label the $j$th state of the $i$th braid component by $s^i_j$ as in Figure \ref{fig:rho_lim_one}. At the value $T = 1$ no jumping occurs at crossings, so a particle at state $s^i_j$, $1 \leq j < 2n - 1$, will move to state $s^i_{j + 1}$ with weight $1$. The Markov chain determined by $D$ at $T = 1$ is thus equivalent to the chain determined by the diagram $D'$ in Figure \ref{fig:rho_lim_two}, where the hollow vertices only serve to differentiate visually between the state $s^i_j$ and $s^i_{j + 1}$ for $1 \leq i \leq n$, $1 \leq j < 2n - 1$. We recall also that at $T = 1$, a particle entering an infinite twist vertex on any incoming edge will exit at any given outgoing edge with weight $1/n$. In other words, infinite twist vertices do not differentiate between different incoming or different outgoing states. For the remainder of the proof, we set $T = 1$ and suppress the evaluation $|_{T = 1}$ from our notation.
		
		\begin{figure}[t]
			\labellist
			\small \hair 2pt
			\pinlabel $\tau_\infty$ at 64 223
			\pinlabel $s^1_1$ [r] at 30 173
			\pinlabel $s^2_1$ [r] at 30 135
			\pinlabel {$s^1_{2n - 1}$} [r] at 30 277
			\pinlabel {$s^2_{2n - 1}$} [r] at 30 315
			\pinlabel {$s^n_1$} [l] at 100 173
			\pinlabel {$s^n_{2n - 1}$} [l] at 100 277
			\pinlabel $s^1_1$ [r] at 396 170
			\pinlabel $s^1_2$ [r] at 396 206
			\pinlabel {$s^1_{2n - 1}$} [r] at 405 305
			\pinlabel $s^n_1$ [l] at 490 170
			\pinlabel $s^n_2$ [l] at 490 206
			\pinlabel {$s^n_{2n - 1}$} [l] at 485 300
			\endlabellist
			\subcaptionbox{$D$ \label{fig:rho_lim_one}}{
				\includegraphics[height=7.5cm]{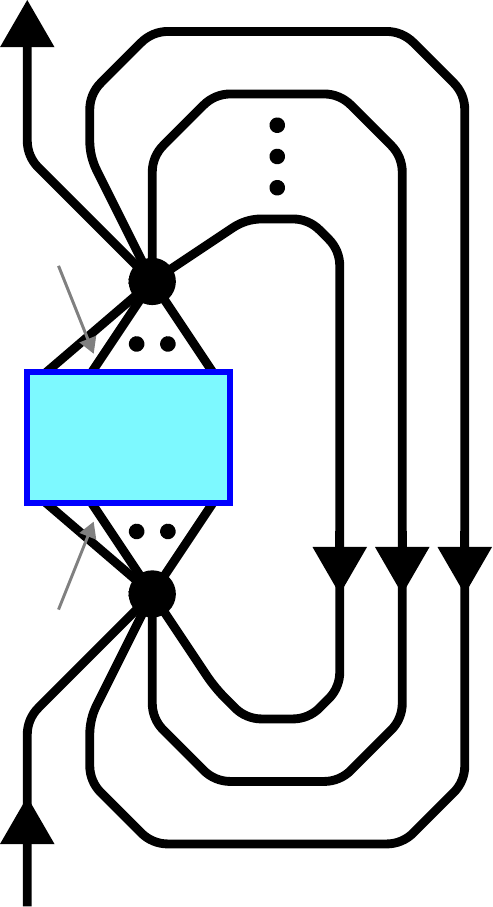}
			}
			\hspace{2cm}
			\subcaptionbox{$D'$ \label{fig:rho_lim_two}}{
				\includegraphics[height=7.5cm]{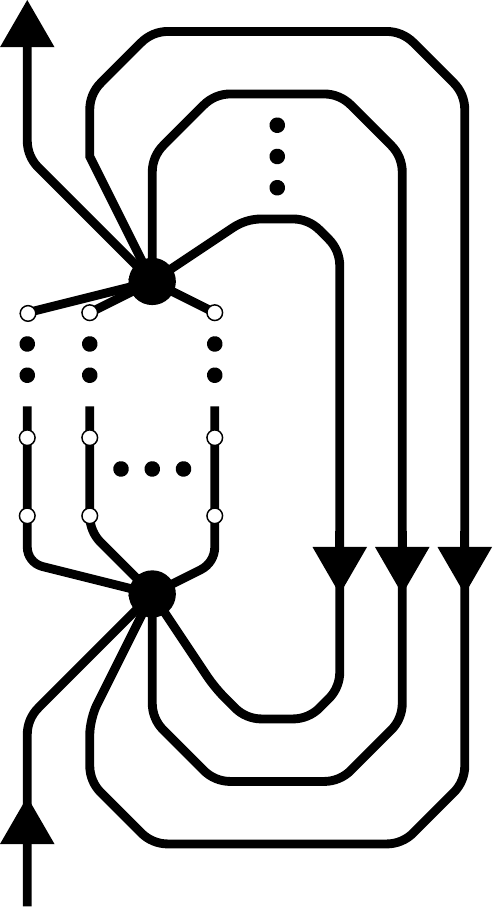}
			}
		\end{figure}
		
		Let $c$ be an arbitrary crossing of the twist region $\tau_\infty$ with incoming overstrand $s^i_j$ and incoming understrand $s^k_\ell$, $i \neq k$. Then we have
		\begin{equation}
			\label{eq:last_r}
			R_1(c) = g(s^k_\ell, s^i_j)(g(s^k_{\ell + 1}, s^k_\ell) + g(s^k_\ell, s^k_{\ell + 1}) - g(s^i_j, s^k_\ell)) - g(s^i_j,s^i_j)(g(s^k_\ell, s^k_{\ell + 1}) - 1) - 1/2.
		\end{equation}
		We consider $g$ as in (\ref{eq:prob_def}), as a weighted sum over all walks. This is valid by \Cref{lem:path_conv}, provided we can show the sum converges.
		
		Let $s^a_b$ and $s^c_d$ be states with arbitrary indices $a$, $b$, $c$ and $d$---we will compute $g(s^a_b, s^c_d)$ explicitly. Let $w$ be an arbitrary walk from $s^a_b$ to $s^c_d$. By the above discussion, the weight $a(w)$ depends only on the number of times $w$ passes through an infinite twist vertex: each such transition occurs with weight $1/n$, while any transition not involving an infinite twist vertex has weight $1$. Equivalently, let $\#_\infty(w)$ denote the number of times the walk $w$ passes through an infinite twist vertex. Then
		$$
		a(w) = \frac{1}{n^{\#_\infty(w)}}.
		$$
		Since $s^a_b$ and $s^c_d$ both lie in $\tau_\infty$, there are no walks $w \in \mathcal{W}_{s^a_b,s^c_d}$ with $\#_\infty(w)$ odd. Additionally, there exists a walk $w \in \mathcal{W}_{s^a_b,s^c_d}$ with $\#_\infty(w) = 0$ if and only if $a = c$ and $b \leq d$: in this case $w$ the unique walk given by moving along the braid component of $\tau_\infty$ from $s^a_b$ to $s^c_d$.
		
		There are $n - 1$ walks $w \in \mathcal{W}_{s^a_b,s^c_d}$ with $\#_\infty(w) = 2$: any such walk begins at $s^a_b$, moves to the infinite twist vertex and exits at one of the $n - 1$ non-outgoing strands of $D$, then moves to second twist vertex and exits at strand $c$. More generally, it is not difficult to show that
		$$
		|\{w \in \mathcal{W}_{s^a_b,s^c_d} \mid \#_\infty(w) = 2k, k > 0\}| = n^{k - 1}(n - 1)^k.
		$$
		We therefore compute
		\begin{align*}
			g(s^a_b, s^c_d) &= \sum_{w \in \mathcal{W}_{s^a_b,s^c_d}} a(w) \\
										 &= \sum_{k = 0}^\infty \sum_{\substack{w \in \mathcal{W}_{s^a_b,s^c_d} \\ \#_\infty(w) = 2k}} \frac{1}{n^{2k}} \\
										 &= \varepsilon^{a,b}_{c,d} + \sum_{k = 1}^\infty \frac{n^{k - 1}(n - 1)^k}{n^{2k}} = \varepsilon^{a,b}_{c,d} + \frac{n - 1}{n},
		\end{align*}
		where
		$$
		\varepsilon^{a,b}_{c,d}  = \begin{cases} 1 & a = c \text{ and } b \leq d \\ 0 & \text{otherwise} \end{cases}.
		$$
		In particular, the sum converges for all choices of $a$, $b$, $c$ and $d$.
		
		Let $\alpha =  \frac{n - 1}{n}$. Plugging the above calculation into (\ref{eq:last_r}), we find that
		$$
		R_1(c)|_{T = 1} = \alpha(\alpha + \alpha + 1 - \alpha) - (\alpha + 1)(\alpha + 1 - 1) - 1/2= -1/2
		$$
		for all crossings $c \in \tau_\infty$. We then use (\ref{eq:okay}) to conclude
		\begin{align*}
		\Big( \lim_{t \to \infty} d_t(\{K_t\}) \Big)|_{T = 1} = \pm \frac{1}{n^2} \Big( \sum_{c \in \tau_\infty} R_1(c)|_{T = 1} \Big) = \pm \frac{1}{n^2} \cdot \frac{n(n - 1)}{2} = \pm \frac{n - 1}{2n}.
		\end{align*}
	\end{proof}
	
\section{Proof of Lemma \protect\ref{lem:deferred}}
	\label{sec:lemma}
	
	In this section we prove Lemma \ref{lem:deferred} from the proof of Proposition \ref{thm:cmc_det}. For this it will be useful to translate our Markov chain framework into the language of graph theory. From a Markov chain $M$ with transition function $a$, we construct a directed, weighted graph $\Gamma = \Gamma(M)$ as follows: the vertex set of $\Gamma$ is $V = \mathcal{S}(M)$, and there is a directed edge $e_{st}$ from a state $s$ to a state $t$ if and only if $a(s,t) \neq 0$. We let $E$ denote the edge set of $\Gamma$ and define a weight function $a : E \to \Z(T)$ by $a(e_{st}) = a(s,t)$.
	
	In this context (simple) cycles of $M$ are (simple) cycles of $\Gamma$, and the weight of a cycle $c = [e_1 \cdots e_k]$, $e_i \in E$ is given by
	$$
	a_\text{circ}(c) = \prod_{i = 1}^k a(e_i).
	$$
	As above, the square brackets defining $c$ indicate that the sequence of edges is considered only up to cyclic permutation. Note also that we allow non-simple cycles to have repeated edges.
	
	Let $\Z^{(E)}$ be the free abelian group formally generated by the edge set $E$. Given any multicycle $q = \{c_1, \dots, c_k\}$ where $c_i = [e^i_1e^i_2 \cdots e^i_{\ell_i}]$, we define an element $f(q) \in \Z^{(E)}$ by
	$$
	f(q) = \sum_{i = 1}^k \sum_{j = 1}^{\ell_i} e^i_j.
	$$
	Additionally, for a multicycle $q$ as above, we define the {\em bad set} $\text{bad}(q)$ of $q$ to be the subgraph of $\Gamma$ consisting of all edges which occur more than once in $q$ (either in different cycles or in the same cycle), and all vertices which are the initial vertex of more than one edge in $q$. Simple multicycles are precisely those multicycles with empty bad set.
	
	\begin{lemma}
		\label{lem:annoying1}
		For any two multicycles $q$,$q'$, if $f(q) = f(q') \in \Z^{(E)}$ then
		$$
		a_\text{circ}(q) = a_\text{circ}(q')
		$$
		and bad$(q) = \text{bad}(q')$.
	\end{lemma}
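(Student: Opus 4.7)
The plan is to show that both $a_\text{circ}(q)$ and $\text{bad}(q)$ depend only on the formal edge-sum $f(q) \in \Z^{(E)}$, so that equality of $f(q)$ and $f(q')$ forces equality of both quantities. The key observation is that $f(q)$ records precisely the multiset of edges of $\Gamma$ used by $q$, counted with multiplicity across all cycles of $q$; once one has this multiset, the decomposition of $q$ into individual cycles is no longer visible, but neither $a_\text{circ}$ nor $\text{bad}$ actually needs that decomposition.

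For the weight, I would first expand the definition: writing $q = \{c_1, \dots, c_k\}$ with $c_i = [e^i_1 \cdots e^i_{\ell_i}]$,
\[
a_\text{circ}(q) \;=\; \prod_{i = 1}^{k} a_\text{circ}(c_i) \;=\; \prod_{i = 1}^{k} \prod_{j = 1}^{\ell_i} a(e^i_j) \;=\; \prod_{e \in E} a(e)^{n_e(q)},
\]
where $n_e(q) \in \Z_{\geq 0}$ is the coefficient of $e$ in $f(q)$. The right-hand side manifestly depends only on $f(q)$, so $f(q) = f(q')$ immediately yields $a_\text{circ}(q) = a_\text{circ}(q')$.

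For the bad set, I would argue the two defining conditions separately. An edge $e \in E$ lies in $\text{bad}(q)$ exactly when $n_e(q) \geq 2$, a condition read off directly from $f(q)$. A vertex $v \in V$ lies in $\text{bad}(q)$ exactly when the number of edges of $q$ (counted with multiplicity over all cycles) whose initial vertex is $v$ is at least two; this count equals $\sum_{e : \mathrm{init}(e) = v} n_e(q)$, which is again determined by $f(q)$. Thus $\text{bad}(q) = \text{bad}(q')$ whenever $f(q) = f(q')$.

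There is no real obstacle here; the statement is essentially a bookkeeping lemma whose content is that $f$ is a complete invariant for the data needed by $a_\text{circ}$ and $\text{bad}$. The only mildly delicate point is making sure that the ``initial vertex'' count is truly edge-based rather than cycle-based, so that it survives the forgetting of the cycle decomposition---this is why the definition of $\text{bad}(q)$ is phrased in terms of edges with common initial vertex rather than, say, in terms of cycles passing through a vertex.
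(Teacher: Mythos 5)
Your proof is correct and takes essentially the same approach as the paper: both rewrite $a_\text{circ}(q)$ as $\prod_e a(e)^{n_e(q)}$, a function of $f(q)$ alone, and observe that both membership criteria for $\text{bad}(q)$ (edges with coefficient $\geq 2$, vertices with at least two outgoing edges counted with multiplicity) are read off directly from the coefficients in $f(q)$. The paper's phrasing of the vertex criterion also mentions vertices bordering a bad edge, but under your multiplicity-based count that case is subsumed, so the two characterizations agree.
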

	
	\begin{proof}
		It is straightforward to check that, if
		$$
		f(q) = \sum_i b_ie_i
		$$
		with $b_i \in \Z$ and $e_i \in E$, then
		$$
		a_\text{circ}(q) = \prod_i a(e_i)^{b_i}.
		$$
		Thus $a_\text{circ}(q)$ is determined by $f(q)$. Similarly, an edge of $\Gamma$ is in bad$(q)$ if and only if it appears in $f(q)$ with coefficient greater than one, and a vertex of $\Gamma$ is in bad$(q)$ if and only if it borders a bad edge or is the initial vertex of more than one edge in $f(q)$.
	\end{proof}

	For any $w \in \Z^{(E)}$, we denote by $f^{-1}(w)$ the (possible empty) set of all multicycles $q$ such that $f(q) = w$.
	
	\begin{lemma}
		\label{lem:fun1}
		Suppose $w \in \Z^{(E)}$ satisfies $w = f(q)$ for some multicycle $q$, such that the graph bad$(q) \subset \Gamma$ is non-empty and contains no cycles. Then $f^{-1}(w)$ has even cardinality, and exactly half the multicycles in $f^{-1}(w)$ have an even number of cycles.
	\end{lemma}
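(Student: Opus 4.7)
The plan is to construct a fixed-point-free, sign-reversing involution $\Phi: f^{-1}(w) \to f^{-1}(w)$---meaning $|\Phi(q)| \not\equiv |q| \pmod{2}$ for every $q$---since such a $\Phi$ immediately pairs the multicycles into classes of size two with opposite cycle-count parities and yields both conclusions of the lemma. The heart of the construction is the standard fact that composing a permutation with a single transposition changes its cycle count by exactly $\pm 1$; we will realize $\Phi(q)$ as the multicycle obtained from $q$ by transposing two entries of its underlying edge-instance permutation at a canonical bad vertex.

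To identify a canonical vertex, note first that by Lemma \ref{lem:annoying1} the subgraph $\mathrm{bad}(q)$ depends only on $w$; write it $\mathrm{bad}(w)$. Since $\mathrm{bad}(w)$ is finite, non-empty, and acyclic, it contains at least one sink (a vertex with no outgoing edges lying in $\mathrm{bad}(w)$); fix total orderings on $V$ and on $E$, and let $v_0$ be the minimal sink. Being a sink in $\mathrm{bad}(w)$ forces every outgoing edge of $v_0$ in $w$ to be non-bad, hence to have multiplicity one, so the outgoing edge-instances at $v_0$ are canonically labeled by distinct edges $g_1 \prec g_2 \prec \cdots \prec g_k$ of $\Gamma$, with $k \geq 2$ because $v_0$ is bad. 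A multicycle $q$ induces at $v_0$ a function $\sigma_q$ from $\{g_1, \ldots, g_k\}$ to the incoming edge-types at $v_0$, recording for each $g_j$ the incoming type preceding it in $q$; this function, together with the pairings at other vertices, determines $q$.

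The involution $\Phi$ acts by selecting a canonical pair $(g_a, g_b)$ with $\sigma_q(g_a) \neq \sigma_q(g_b)$ and swapping the two values, leaving all pairings at other vertices fixed. Under the identification of multicycles with pairings modulo the ambient edge-instance symmetry $\prod_e S_{w(e)}$, this local swap corresponds globally to composing the edge-instance permutation underlying $q$ with a transposition, so $|\Phi(q)|$ differs from $|q|$ by exactly $\pm 1$, giving the sign-reversing property.

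The main obstacle is choosing the pair $(g_a, g_b)$ canonically in a way that is simultaneously defined for every $q$ and involution-stable (the same pair must be selected by the rule applied to $\Phi(q)$). If $\sigma_q$ takes at least two distinct values, a careful lex-minimum rule on pairs $\bigl((\sigma_q(g_a), g_a),(\sigma_q(g_b),g_b)\bigr)$ chosen symmetrically in the two coordinates---so as to be preserved by the swap---identifies the desired pair. If $\sigma_q$ is constant, then all $g_j$ are preceded by instances of a single incoming edge $e$, which must be a bad edge of multiplicity at least $k$ entering $v_0$; in this degenerate case the local pairing at $v_0$ is combinatorially rigid and contributes no variation to $f^{-1}(w)$, and we reduce to a strictly smaller problem by contracting $v_0$ together with its forced local structure. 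The new bad graph remains acyclic, so the recursion terminates. The acyclicity of $\mathrm{bad}(w)$ is essential at precisely this point: it guarantees that the inductive reduction of rigid sinks always bottoms out in finitely many steps at a configuration where the swap rule applies.
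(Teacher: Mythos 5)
Your high-level idea matches the paper's---flip cycle-count parity by a transposition at a sink of $\mathrm{bad}(w)$---but the execution differs: you try to realize this as an explicit, fixed-point-free, sign-reversing involution $\Phi$, whereas the paper partitions $f^{-1}(w)$ into free $S_n$-orbits (via post-composing the pairing at the sink with an arbitrary $\sigma \in S_n$) and then invokes the fact that $A_n$ has index 2. That difference matters, because the crux of your construction---a canonical, involution-stable pair $\{g_a, g_b\}$ with $\sigma_q(g_a) \neq \sigma_q(g_b)$, selected by a ``careful lex-minimum rule''---does not exist in general. Here is a concrete obstruction: suppose $k = 3$ and the arrival types at $v_0$ are $\{e, e, e'\}$. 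Then $\sigma_q$ ranges over the three functions with this value multiset, $(e,e,e')$, $(e,e',e)$, $(e',e,e)$, and each is carried to each of the other two by swapping values at a pair with distinct images, so they form a triangle $K_3$ under the swap operation. A triangle has no perfect matching, so there is no swap-stable pair-selection rule that depends only on $\sigma_q$; any such rule applied to $q$ and then to $\Phi(q)$ will eventually select different pairs (I verified this directly for the lex-minimum rules that fit your description). Your degenerate-case recursion (constant $\sigma_q$, contract $v_0$) is also only sketched and requires nontrivial justification that the contraction induces a cycle-count-preserving bijection on fibers while keeping the bad graph acyclic and non-empty.

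The paper sidesteps both difficulties in one stroke: it does not need to single out a transposition at all. By acting on the pairing at the sink with the \emph{entire} group $S_n$ and showing the action is free (which uses the distinctness of the $n$ outgoing edges), each orbit has exactly $n!$ elements, and since a transposition changes cycle count by exactly $\pm 1$, the coset decomposition by $A_n$ splits each orbit evenly by parity. This is simpler and avoids the matching obstruction entirely; if you want to salvage the involution viewpoint, the right move is to swap a \emph{fixed} pair $\{g_1, g_2\}$ regardless of $\sigma_q$ and argue (using the acyclicity of $\mathrm{bad}(w)$ to show the chain of forced instance-relabelings must terminate at an impossibility) that this has no fixed points---not to condition the swap on $\sigma_q(g_a) \neq \sigma_q(g_b)$.
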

	
	\begin{proof}
		By Lemma \ref{lem:annoying1} every multicycle in $f^{-1}(w)$ has the same bad set, and by hypothesis bad$(q) \subset \Gamma$ is a finite, non-empty, directed acyclic graph. Thus bad$(q)$ contains at least one vertex $v$ such that no outgoing edge of $v$ in $\Gamma$ is contained in $\text{bad}(q)$. Let $\{e_1, \dots, e_n\} \subset E$ be the set of edges appearing in $w$ with nonzero coefficient which have $v$ as their initial vertex. Since none of these edges is in bad$(q)$, each appears in $w$ with coefficient one. Additionally, because $w$ can be written as a sum of cycles, $n$ is also the sum of the coefficients of all edges in $w$ which have $v$ as their {\em terminal} vertex---we can think of $n$ as the number of times the vertex $v$ is passed through when all the cycles in $q$ are traversed. Because $v \in \text{bad}(q)$, $n \geq 2$.
		
		Let $q$ be an arbitary multicycle in $f^{-1}(w)$; then if all the cycles in $q$ are traversed, the vertex $v$ is encountered $n$ times. We enumerate these ``encounters'' arbitrarily by $\varepsilon_1, \dots, \varepsilon_n$, and assume without loss of generality that on encounter $\varepsilon_i$ the multicycle $q$ exits the vertex $v$ via the edge $e_i$, for $i = 1, \dots, n$. Let $\sigma$ be an element of the permutation group $S_n$ on $n$ elements. Then we define a new multicycle $q_\sigma \in f^{-1}(w)$ by letting $q_\sigma$ be identical to $q$ except that on encounter $\varepsilon_i$ with the vertex $v$, $q_\sigma$ exits $v$ via the edge $e_{\sigma(i)}$ instead of the edge $e_i$. If we think of $q$ as a set of immersed loops in $\Gamma$, we can also conceptualize this operation as cutting each loop at the vertex $v$ and regluing the loose ends according to the permutation $\sigma$, as in Figure \ref{fig:transpose}. Since the edges $e_1, \dots, e_n$ are all distinct, $q_\sigma = q$ if and only if $\sigma$ is the identity permutation. By the same logic, given two permutations $\sigma$ and $\sigma'$, $q_\sigma = q_{\sigma'}$ if and only if $\sigma = \sigma'$.
		
		We define a partition on $f^{-1}(w)$ by letting $q \sim q'$ for $q, q' \in f^{-1}(w)$ if $q' = q_\sigma$ for some $\sigma \in S_n$. It is easy to check that this is a well-defined equivalence relation, and by the preceeding discussion each equivalence class contains exactly $|S_n| = n!$ elements. Thus $|f^{-1}(w)|$ is divisible by $n!$ for some $n \geq 2$, so $|f^{-1}(w)|$ is even. Finally we observe that if $q \in f^{-1}(w)$ and $\sigma \in S_n$ is a transposition, then $q_\sigma$ has exactly one more or one fewer cycle than $q$. Topologically, this is the situation shown in Figure \ref{fig:transpose}. It follows that $|q_\sigma|$ has the same parity as $|q|$ if and only if $\sigma$ is in the alternating group $A_n < S_n$. Since $A_n$ is an index two subgroup of $S_n$, exactly half of the multicycles in each equivalence class of $f^{-1}(w)$ contain an even number of cycles. This proves the lemma.
	\end{proof}

	\begin{figure}[t]
		\includegraphics[height=1.5cm]{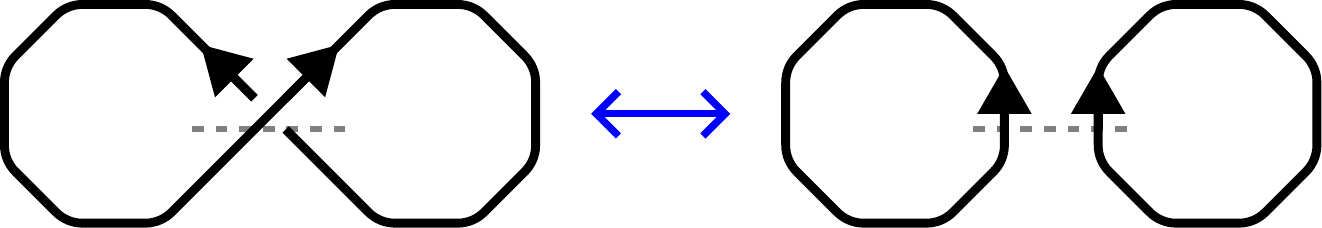}
		\caption{Changing a multicycle by a transposition}
		\label{fig:transpose}
	\end{figure}

	We now recall Lemma \ref{lem:deferred}, reframing it in our graph-theoretic context. Let $U$ be a subgraph of $\Gamma$ which admits no cycles, and let $\mathcal{Q}'_\text{bad}$ be the set of all multicycles $q$ on $\Gamma$ such that bad$(q)$ is non-empty and contained in $U$. Then the following easily implies Lemma \ref{lem:deferred}.
	
	\begin{lemma}
	$$
		\sum_{q \in \mathcal{Q}_\text{bad}'} (-1)^{|q|}a_\text{circ}(q) = 0.
	$$
	\end{lemma}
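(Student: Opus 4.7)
The plan is to exploit the map $f : \mathcal{Q}'_\text{bad} \to \Z^{(E)}$ as a bookkeeping device, grouping multicycles into fibers of $f$ and showing that each fiber contributes zero to the sum. The crucial inputs are already in place: Lemma \ref{lem:annoying1} tells us that $a_\text{circ}$ and the bad set are constant on fibers of $f$, and Lemma \ref{lem:fun1} tells us that fibers are balanced with respect to the parity of $|q|$ whenever the bad set is nonempty and acyclic.

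More concretely, first I would observe that the fibers of $f$ partition $\mathcal{Q}'_\text{bad}$. Indeed, if $q \in \mathcal{Q}'_\text{bad}$ and $q' \in f^{-1}(f(q))$, then by Lemma \ref{lem:annoying1} we have $\text{bad}(q') = \text{bad}(q)$, which is nonempty and contained in $U$; hence $q' \in \mathcal{Q}'_\text{bad}$. So I may write
$$
\sum_{q \in \mathcal{Q}'_\text{bad}} (-1)^{|q|} a_\text{circ}(q) = \sum_{w \in f(\mathcal{Q}'_\text{bad})} \sum_{q \in f^{-1}(w)} (-1)^{|q|} a_\text{circ}(q).
$$

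Next, I would fix $w \in f(\mathcal{Q}'_\text{bad})$ and analyze the inner sum. By Lemma \ref{lem:annoying1}, $a_\text{circ}(q)$ takes a common value $a(w)$ for every $q \in f^{-1}(w)$, so the inner sum equals $a(w) \sum_{q \in f^{-1}(w)} (-1)^{|q|}$. Pick any representative $q_0 \in f^{-1}(w)$; then $\text{bad}(q_0)$ is nonempty (since $q_0 \in \mathcal{Q}'_\text{bad}$) and is a subgraph of $U$, hence contains no cycles because $U$ contains none. Lemma \ref{lem:fun1} therefore applies to $w$, giving that $f^{-1}(w)$ has even cardinality with exactly half of its elements having $|q|$ even. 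Consequently $\sum_{q \in f^{-1}(w)} (-1)^{|q|} = 0$, and the inner sum vanishes.

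Summing the vanishing inner sums over all $w \in f(\mathcal{Q}'_\text{bad})$ yields $\sum_{q \in \mathcal{Q}'_\text{bad}} (-1)^{|q|} a_\text{circ}(q) = 0$, as required. The only nontrivial ingredient here is Lemma \ref{lem:fun1}; the main obstacle in the broader argument is really verifying that the ``cut-and-reglue'' bijection used there is well-defined and that the parity of $|q|$ tracks the sign of the permutation, but that work is already done. The present lemma is just the clean packaging of those facts via the partition into $f$-fibers.
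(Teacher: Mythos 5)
Your proof is correct and follows essentially the same route as the paper's: partition $\mathcal{Q}'_\text{bad}$ into fibers of $f$, use Lemma \ref{lem:annoying1} to pull $a_\text{circ}$ out of each fiber sum, and invoke Lemma \ref{lem:fun1} to make the remaining alternating sum over each fiber vanish. No gaps.
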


	\begin{proof}
		Fix $q' \in \mathcal{Q}_\text{bad}'$, and let $w = f(q') \in \Z^{(E)}$. Then by Lemma \ref{lem:annoying1}, since any $q \in f^{-1}(w)$ has the same bad set as $q'$,
		$$
		f^{-1}(w) \subset  \mathcal{Q}_\text{bad}'.
		$$
		We can thus partition $\mathcal{Q}_\text{bad}'$ as
		$$
		\mathcal{Q}_\text{bad}' = f^{-1}(w_1) \sqcup f^{-1}(w_2) \sqcup \cdots \sqcup f^{-1}(w_m)
		$$
		for some appropriate choice of vectors $w_1, \dots, w_m \in \Z^{(E)}$, and it suffices to show that
		$$
		\sum_{q \in f^{-1}(w)} (-1)^{|q|}a_\text{circ}(q) = 0.
		$$
		for our original arbitrary choice of $w$. Lemma \ref{lem:annoying1} tells us any $q \in f^{-1}(w)$ satisfies $a_{\text{circ}}(q) = a_{\text{circ}}(q')$. Further, since $U$ contains no cycles, bad$(q')$ contains no cycles and Lemma \ref{lem:fun1} tells us exactly half the elements of $f^{-1}(w)$ have even cardinality. Thus
		$$
		\sum_{q \in f^{-1}(w)} (-1)^{|q|}a_\text{circ}(q) = a_\text{circ}(q')\sum_{q \in f^{-1}(w)} (-1)^{|q|} = 0
		$$
		as desired.
	\end{proof}


\begin{thebibliography}{99}
		
		\bibitem{aza23}
		{\scshape Azarpendar, Soheil.} Twisting and stabilization in knot floer homology (2023).
		\arx{2312.01501}.
		
		\bibitem{bamo17-2}
		{\scshape Baker, Kenneth L.; Motegi, Kimihiko.} Twist families of {L}-space knots,
			their genera, and {S}eifert surgeries, \emph{Comm. Anal. Geom.} \textbf{27} (2019),
		no.~4, 743--790. \mrev{4025322}, \zbl{1432.57027}, \arx{1506.04455}, \doi{10.4310/CAG.2019.v27.n4.a1}.
		
		\bibitem{bnvdv21}
		{\scshape Bar-Natan, Dror; van~der Veen, Roland.} Perturbed gaussian generating
			functions for universal knot invariants (2021). \arx{2109.02057}.
		
		\bibitem{bnvdv22}
		{\scshape Bar-Natan, Dror; van~der Veen, Roland.} A perturbed-{A}lexander
			invariant, \emph{Quantum Topol.} \textbf{15} (2024), no.~3, 449--472. \mrev{4814687}, \zbl{07957810}, \arx{2206.12298}, \doi{10.4171/QT/206}.
		
		\bibitem{bec24}
		{\scshape Becerra, Jorge.} On {B}ar-{N}atan--van der {V}een's perturbed {G}aussians,
		\emph{Rev. R. Acad. Cienc. Exactas F\'is. Nat. Ser. A Mat. RACSAM} \textbf{118}
		(2024), no.~2, Paper No. 46, 58. \mrev{4687526}, \zbl{1537.57021}, \arx{2302.01124}, \doi{10.1007/s13398-023-01536-1}.
		
		\bibitem{cafo69}
		{\scshape Cartier, P.; Foata, D.} Probl\`emes combinatoires de commutation et
			r\'{e}arrangements, \emph{Lecture Notes in Mathematics}, vol. No. 85,
		Springer-Verlag, Berlin-New York, 1969. \mrev{239978}, \zbl{0186.30101}, \doi{10.1007/BFb0079468}.
		
		\bibitem{ck05}
		{\scshape Champanerkar, Abhijit; Kofman, Ilya.} On the {M}ahler measure of {J}ones
			polynomials under twisting, \emph{Algebr. Geom. Topol.} \textbf{5} (2005), 1--22.
		\mrev{2135542}, \zbl{1061.57007}, \arx{math/0404236}, \doi{10.2140/agt.2005.5.1}.
		
		\bibitem{che22}
		{\scshape Chen, Daren.} Twistings and the alexander polynomial (2022). \arx{2202.12489}.
		
		\bibitem{cro89}
		{\scshape Cromwell, P.~R.} Homogeneous links, \emph{J. London Math. Soc.} (2) \textbf{39}
		(1989), no.~3, 535--552. \mrev{1002465}, \zbl{0685.57004}, \doi{10.1112/jlms/s2-39.3.535}.
		
		\bibitem{foze99}
		{\scshape Foata, Dominique; Zeilberger, Doron.} A combinatorial proof of {B}ass's
			evaluations of the {I}hara-{S}elberg zeta function for graphs, \emph{Trans. Amer.
		Math. Soc.} \textbf{351} (1999), no.~6, 2257--2274. \mrev{1487614}
		
		\bibitem{giro17}
		{\scshape Giscard, P.-L.; Rochet, P.} Algebraic combinatorics on trace monoids:
			extending number theory to walks on graphs, \emph{SIAM J. Discrete Math.}
		\textbf{31} (2017), no.~2, 1428--1453. \mrev{3666770}, \zbl{1366.05049}, \arx{1601.01780}, \doi{10.1137/15M1054535}.
		
		\bibitem{jon87}
		{\scshape Jones, V.~F.~R.} Hecke algebra representations of braid groups and link
			polynomials, \emph{Ann. of Math.} (2) \textbf{126} (1987), no.~2, 335--388.
		\mrev{908150}, \zbl{0631.57005}, \doi{10.2307/1971403}.
		
		\bibitem{katu08}
		{\scshape Kassel, Christian; Turaev, Vladimir.} Braid groups, \emph{Graduate Texts in
		Mathematics}, vol. 247, \emph{Springer, New York}, 2008, With the graphical
		assistance of Olivier Dodane. \mrev{2435235}, \zbl{1208.20041}, \doi{10.1007/978-0-387-68548-9}.
		
		\bibitem{lee23}
		{\scshape Ruey~Shan Lee, Christine.} Stable khovanov homology and volume (2023).
		\arx{2307.09903}.
		
		\bibitem{ltw98}
		{\scshape Lin, Xiao-Song; Tian, Feng; Wang, Zhenghan.} Burau representation and
			random walks on string links, \emph{Pacific J. Math.} \textbf{182} (1998), no.~2,
		289--302. \mrev{1609599}, \zbl{0903.57003}, \arx{q-alg/9605023}, \doi{10.2140/pjm.1998.182.289}.
		
		\bibitem{lw01}
		{\scshape Lin, Xiao-Song; Wang, Zhenghan.} Random walk on knot diagrams, colored
			{J}ones polynomial and {I}hara-{S}elberg zeta function, \emph{Knots, braids, and
		mapping class groups---papers dedicated to {J}oan {S}. {B}irman ({N}ew
		{Y}ork}, 1998), AMS/IP Stud. Adv. Math., vol.~24, Amer. Math. Soc.,
		Providence, RI, 2001, pp.~107--121. \mrev{1873112}, \zbl{0992.57001}, \arx{math/9812039}.
		
		\bibitem{mar04}
		{\scshape March\'e, Julien.} On {K}ontsevich integral of torus knots, \emph{Topology Appl.}
		\textbf{143} (2004), no.~1-3, 15--26. \mrev{2080280}, \zbl{1055.57013}, \arx{math/0310111}, \doi{10.1016/j.topol.2004.01.006}.
		
		\bibitem{oht04}
		{\scshape Ohtsuki, Tomotada.} A cabling formula for the 2-loop polynomial of knots,
		\emph{Publ. Res. Inst. Math. Sci.} \textbf{40} (2004), no.~3, 949--971. \mrev{2074706}, \zbl{1097.57016}, \arx{math/0310216}, \doi{10.2977/prims/1145475498}.
		
		\bibitem{ove13}
		{\scshape Overbay, Andrea.} Perturbative {E}xpansion of the {C}olored {J}ones
			{P}olynomial, \emph{ProQuest LLC}, \emph{Ann Arbor, MI}, 2013, Thesis (Ph.D.)--The
		University of North Carolina at Chapel Hill. \mrev{3192822}
		
		\bibitem{roz96}
		{\scshape Rozansky, Lev.} A contribution of the trivial connection to the {J}ones
			polynomial and {W}itten's invariant of {$3$}d manifolds. {I}, {II}, \emph{Comm.
		Math. Phys.} \textbf{175} (1996), no.~2, 275--296, 297--318. \mrev{1370097}, \zbl{0872.57010}, \arx{hep-th/9401061}, \doi{10.1007/BF02102409}.
		
		\bibitem{roz98}
		{\scshape Rozansky, Lev.} The universal {$R$}-matrix, {B}urau representation, and the
			{M}elvin-{M}orton expansion of the colored {J}ones polynomial, \emph{Adv. Math.}
		\textbf{134} (1998), no.~1, 1--31. \mrev{1612375}, \zbl{0949.57006}, \arx{q-alg/9604005}, \doi{10.1006/aima.1997.1661}.
		
		\bibitem{roz10}
		{\scshape Rozansky, Lev.} A categorification of the stable su(2)
			witten-reshetikhin-turaev invariant of links in s2 x s1 (2010). \arx{1011.1958}.
		
		\bibitem{roz14}
		{\scshape Rozansky, Lev.} An infinite torus braid yields a categorified
			{J}ones-{W}enzl projector, \emph{Fund. Math.} \textbf{225} (2014), no.~1, 305--326.
		\mrev{3205575}, \zbl{1336.57025}, \arx{https://arxiv.org/abs/1005.3266}, \doi{10.4064/fm225-1-14}.
		
		\bibitem{sw04}
		{\scshape Silver, Daniel~S.; Williams, Susan~G.} Mahler measure of {A}lexander
			polynomials, \emph{J. London Math. Soc.} (2) \textbf{69} (2004), no.~3, 767--782.
		\mrev{2050045}, \zbl{1055.57017}, \arx{math/0105234}, \doi{10.1112/S0024610704005289}.
		
		\bibitem{thu22}
		{\scshape Thurston, William~P.} \emph{The geometry and topology of three-manifolds. {V}ol.
			{IV}}, \emph{American Mathematical Society}, \emph{Providence, RI}, (2022) \copyright 2022,
		Edited and with a preface by Steven P. Kerckhoff and a chapter by J. W.
		Milnor. \mrev{4554426}, \zbl{1507.57005}.
		
		\bibitem{tor53}
		{\scshape Torres, Guillermo.} On the {A}lexander polynomial, \emph{Ann. of Math.} (2)
		\textbf{57} (1953), 57--89. \mrev{52104}, \zbl{0050.17903}, \doi{10.2307/1969726}.
		
		\bibitem{wil21}
		{\scshape Willis, Michael.} Khovanov homology for links in {$\#^r(S^2\times S^1)$},
		\emph{Michigan Math. J.} \textbf{70} (2021), no.~4, 675--748. \mrev{4332675}, \zbl{1496.57016}, \arx{1812.06584}, \doi{10.1307/mmj/1594281620}.
		
		\bibitem{yok91}
		{\scshape Yokota, Yoshiyuki.} Twisting formulae of the {J}ones polynomial, \emph{Math.
		Proc. Cambridge Philos. Soc.} \textbf{110} (1991), no.~3, 473--482.
		\mrev{1120482}, \zbl{0746.57002}, \doi{10.1017/S0305004100070559}.
		
	\end{thebibliography}
\end{document}